\def\n{\nabla}
\def\wt{\widetilde}
\newcommand{\ol}{\overline}
\def\<{\langle}
\def\>{\rangle}
\def\O{\Omega}
\def\S{\Sigma}
\def\n{\nabla}
\def\a{\alpha}
\def\b{\beta}
\def\n{\nabla}
\def\l{\lambda}
\def\O{\Omega}
\def\p{\partial}
\def\a{\alpha}
\def\b{\beta}
\def\g{\gamma}
\def\k{\kappa}
\def\l{\lambda}
\def\L{\mathcal{L}}
\def\s{\sigma}
\def\ov{\overline}
\def\n{\nabla}
\def\<{\langle}
\def\>{\rangle}
\def\div{{\rm div}}
\def\RR{\mathbb{R}}
\def\SS{\mathbb{S}}
\def\HH{\mathbb{H}}
\def\BB{\mathbb{B}}
\def\W{\mathcal{W}}
\def\A{\mathcal{A}}
\def\H{\mathcal{H}}
\def\C{\mathcal{C}}
\def\T{\mathcal{T}}
\def\wh{\widehat}
\def\p{\partial}
\def\ov{\overline}
\newtheorem{thm}{Theorem}[section] 
\newtheorem{lem}[thm]{Lemma} 
\newtheorem{cor}[thm]{Corollary} 
\newtheorem{prop}[thm]{Proposition} 
\newtheorem{defn}[thm]{Definition} 
\theoremstyle{definition} 
\newtheorem{rem}[thm]{Remark} 
\theoremstyle{remark} 
\theoremstyle{question}
\numberwithin{equation}{section}
\begin{document}
\setlength{\baselineskip}{1.2\baselineskip}

\title[Alexandrov-Fenchel inequalities in hyperbolic space]
{Alexandrov-Fenchel inequalities \\ for
capillary hypersurfaces in hyperbolic space}

\author[X. Mei]{Xinqun Mei}\address[X. Mei]{Mathematisches Institut, Albert-Ludwigs-Universit\"{a}t Freiburg, Freiburg im Breisgau, 79104, Germany}\email{xinqun.mei@math.uni-freiburg.de}

\author[L. Weng]{Liangjun Weng}\address[L. Weng]{Dipartimento di Matematica, Università di Pisa, Pisa, 56127, Italy.  Dipartimento Di Matematica, Università degli Studi di Roma "Tor Vergata", Roma, 00133, Italy}\email{liangjun.weng@uniroma2.it}

\subjclass[2020]{Primary 53C21, 35B40,  Secondary 52A40, 53E40, 35K96.}
	% Please provide  minimum  5 keywords.
	\keywords{Isoperimetric type problems, Quermassintegrals, Alexandrov-Fenchel inequalities, Inverse curvature flow}
	
\maketitle 

\begin{center}   \scriptsize{\textit{Dedicated to Professor Guofang Wang on the occasion of his 60th birthday}}\end{center}

\begin{abstract} 
In this article, we first introduce the quermassintegrals for compact hypersurfaces with capillary boundaries in hyperbolic space from a variational viewpoint, and then we solve an isoperimetric type problem in hyperbolic space. By constructing a new locally constrained inverse curvature flow, we obtain the Alexandrov-Fenchel inequalities for convex capillary hypersurfaces in hyperbolic space. This generalizes a theorem of Brendle-Guan-Li \cite{BGL} for convex closed hypersurfaces in hyperbolic space.  \end{abstract}  

%\tableofcontents

    \section{Introduction}
   Let $\Omega$ be a  convex body in hyperbolic space $\HH^{n+1}$ with boundary $\partial \Omega$. The $k$-th quermassintegral of $\Omega$, denoted as $\mathcal{W}_{k}(\Omega)$,  is defined as the volume of the set of totally geodesic $k$-dimensional subspaces that intersect $\Omega$  (see e.g. \cite[Part \uppercase\expandafter{\romannumeral4}]{Santalo} or \cite{Sola}).   In particular, \begin{eqnarray}\label{quer for closed0}
        \mathcal{W}_{0}(\Omega)=|\Omega|, ~~~~\W_1(\O)=\frac{1}{n+1}|\p \O|. %\W_{n+1}(\O)= \frac{|\SS^n|}{n+1}.
   \end{eqnarray}If further assume that $\p\O$ is smooth (say at least $C^2$), then the quermassintegrals $\W_k(\O)$ and the curvature integrals are related  
(see e.g. \cite[Proposition~7]{Sola}) by
\begin{eqnarray}\label{quer for compact hypersurface}
\begin{aligned}
   \mathcal{W}_{k+1}(\Omega)=\frac{1}{n+1}\int_{\partial\Omega}H_{k}dA-\frac{k}{n+2-k}\mathcal{W}_{k-1}(\Omega), ~1\leq k\leq n-1.
    \end{aligned}
\end{eqnarray}
Here $H_{k}$ is the normalized $k$-th mean curvature of $\partial\Omega\subset \HH^{n+1}$, see Section \ref{sec2.1} for precise definition. The above quermassintegrals possess a nice variational structure (see e.g. \cite[Proposition 3.1]{WX2014} or \cite[Section 4]{BC97}):
\begin{eqnarray}\label{variation of quer}
    \frac{d}{dt} \mathcal{W}_{k}(\Omega_{t}) =\frac{n+1-k}{n+1}\int_{\partial\Omega_{t}}fH_{k}dA_{t}, \quad 0\leq k\leq n+1, \end{eqnarray}
for any normal variation along $\p \O_t$ with the speed function $f$. Furthermore,  the Alexandrov-Fenchel inequalities involving the quermassintegrals $\W_k(\O)$ in hyperbolic space have attracted wide attention in recent decades, it states 
\begin{eqnarray}\label{Alex-Fen}         \mathcal{W}_{k}(\Omega)\geq f_{k}\circ f^{-1}_{l}\left(\mathcal{W}_{l}(\Omega)\right),~ 0\leq l<k\leq n,   \end{eqnarray}where $f_{k}:[0,\infty)\rightarrow \RR_{+}$ is the monotone function defined by $f_{k}(\rho):=\mathcal{W}_{k}(B_{\rho})$, with $B_\rho$ being the geodesic ball of radius $\rho$ in $\HH^{n+1}$, and $f_{l}^{-1}$ being the inverse function of $f_{l}$. Moreover, the equality holds in \eqref{Alex-Fen} if and only if $\p\Omega$ is a geodesic sphere. In \cite{WX2014}, Wang-Xia studied a globally constrained quermassintegral preserving flow, given by one parameter family of embedded hypersurface $x(\cdot,t): M^n\times[0,T)\to \HH^{n+1}$ satisfying
 \begin{eqnarray}\label{wang-xia flow}
     \p_t x=\left(\frac{\int_{\partial\Omega_{t}}H_{k}^{\frac{1}{k-l}}H_{l}^{1-\frac{1}{k-l}}dA_{t}}{\int_{\partial\Omega_{t}}H_{l}dA_{t}}-\left(\frac{H_{k}}{H_{l}} \right)^{\frac{1}{k-l}}\right)\nu, \quad 0\leq l<k\leq n.
 \end{eqnarray}where $\nu=\nu(\cdot,t)$ is the unit outward normal of $x(\cdot,t)$. The flow \eqref{wang-xia flow} preserves $\W_l(\O_t)$ while decreases $\W_k(\O_t)$, then they established the Alexandrov-Fenchel inequalities \eqref{Alex-Fen} for $h$-convex domain $\O$ in $\HH^{n+1}$ (cf. \cite[Theorem~1.1]{WX2014}). 
Here a domain $\Omega\subset \HH^{n+1}$ is referred to as $h$-convex if all the principal curvatures of its boundary $\partial \Omega$ are greater or equal to $1$. In other words, the minimum value of $\W_k(\O)$ among all the $h$-convex closed hypersurfaces $\p\O$ in $\HH^{n+1}$ with a fixed value $\W_l(\O)$ is achieved by the geodesic sphere. This solves a natural isoperimetric type problem for closed hypersurfaces in $\HH^{n+1}$. In particular, when  $k=1$ and $l=0$, \eqref{Alex-Fen} reduces to the classical isoperimetric inequality in $\HH^{n+1}$ relating the area and volume, which was established by Schmidt in \cite{Schm}.

On the other hand, the hyperbolic space can be viewed as a warped product space $\mathbb{H}^{n+1}=[0,\infty)\times \mathbb{S}^{n}$, equipped with the metric $$\bar{g}=d\rho^{2}+\phi^{2}(\rho)\sigma,$$ where $\phi(\rho)=\sinh \rho$ and $\sigma$ is the standard spherical metric on $\SS^{n}$. Based on the Minkowski formula (see e.g. Guan-Li \cite[Proposition 2.5]{GL2014}) for the closed hypersurface $M:=\p \O\subset \HH^{n+1}$
\begin{eqnarray}\label{mink formula-closed}
    \int_{M} (\phi' H_{k-1} -v H_k )dA=0,~~~1\leq k \leq n,
\end{eqnarray}where $\phi' =\cosh \rho$ and $v$ is the support function of $M$ as $$v=\bar{g}(\phi(\rho) \partial_{\rho}, \nu).$$% is the support function of $M$ and $\phi' =\cosh \rho$. 
In \cite{BGL}, Brendle-Guan-Li designed a locally constrained inverse curvature flow  as 
\begin{eqnarray}\label{BGL-flow}
    \p_t x=\left(\frac{\phi' }{F}-v\right)\nu,
\end{eqnarray}
where $F=\frac{H_{k}}{H_{k-1}}$. Along the flow \eqref{BGL-flow}, when the evolving hypersurfaces are  $k$-convex, then 
the $k$-th quermassintegral $\mathcal{W}_{k}(\Omega_{t})$ is preserved and the $(k+1)$-th quermassintegral $\mathcal{W}_{k+1}(\Omega_{t})$ is non-increasing with respect to the time $t\geq 0$.  Here a  smooth hypersurface $M\subset \HH^{n+1}$ is  $k$-convex for some $1\leq k\leq n$ means that its principal
curvatures $\kappa:=(\k_1,\cdots, \k_n)\in\Gamma_{k}$, see \eqref{2.4}. A smooth hypersurface $M\subset \HH^{n+1}$  is called {star-shaped} if its support function $v$ is positive everywhere on $M$.  In \cite[Theorem~1.3]{BGL}, Brendle-Guan-Li established the long-time existence and convergence of flow \eqref{BGL-flow} under two cases: either the initial closed hypersurface $M_0$ is strictly convex and $k=n$ or $M_0$ is star-shaped, $k$-convex and satisfying a gradient bound condition. As a consequence, the inequalities \eqref{Alex-Fen}  holds for $k=n$ and $0\leq l\leq n-1$ provided that $\p\Omega$ is convex. Recently, Hu-Li-Wei \cite[Theorem~1.1]{LHW2021} obtained the long-time existence and convergence of flow \eqref{BGL-flow} when   $M_{0}$ is a $h$-convexity for all $1\leq k\le n$, which also provided an alternative proof of the inequalities \eqref{Alex-Fen} for $h$-convex domain $\O\subset \HH^{n+1}$. It is a challenging problem to prove that inequalities \eqref{Alex-Fen} hold for a domain under the weak geometric assumption, say for instance assuming $\p\O$ is $(k-1)$-convex and star-shaped,  which is an analogous known condition to be true for the Alexandrov-Fenchel inequality in Euclidean space (cf. Guan-Li \cite[Theorem 2]{GL2009}). Nevertheless, there have been some efforts and partial results in this direction.  Li-Wei-Xiong \cite[Theorem~1]{LWX} demonstrated that when $k=3$ and $l=1$, \eqref{Alex-Fen} holds for $\p\O$ being $2$-convex and star-shaped.   Andrews-Chen-Wei \cite[Corollary~1.5]{ACW} established \eqref{Alex-Fen} with $k=1, \cdots, n$ and $l=0$ for a domain with boundary having positive intrinsic curvatures, which by Gauss equation is
equivalent to the principal curvatures of $\p\O$ satisfying $\k_i\k_j>1$ for $1\leq i\neq j\leq n$. This is again a weaker condition than $h$-convexity.   Andrews-Hu-Li \cite[Corollary~1.2]{AHL} showed that \eqref{Alex-Fen} holds for a strictly convex domain with $k=n-1$ and $l=n-1-2m $ $(0<2m<n)$. For more related progress in hyperbolic space, one can refer to \cite{AW, BP, BDS, BHW, CM, deG, GWW-2, GWW-1, GL2014, GLW, HL2019, Mak, SX, WX} and references therein.

Meanwhile, there has been growing interest in investigating geometric variational problems for compact hypersurfaces with non-empty boundaries in recent decades, such as hypersurfaces with free or general capillary boundaries in Euclidean space. Especially the studies have focused on isoperimetric type problems (see \cite{BS, BM, CGR, LWW, Maggi} etc.) and Alexandrov-Fenchel type inequalities (see \cite{HWYZ2, SWX, WX2022, WWX1} etc.) for these hypersurfaces in Euclidean space. In particular, Scheuer-Wang-Xia introduced the concept of quermassintegrals for compact hypersurfaces with free boundaries in the Euclidean unit ball $\bar{\BB}^{n+1}$ from a variational perspective in \cite{SWX}. Then they established the Alexandrov-Fenchel inequalities and Gauss-Bonnet-Chern theorem for these quantities, which can be viewed as higher-order generalizations of the relative isoperimetric inequality in $\bar{\BB}^{n+1}$ (cf. \cite[Theorem 18.1.3]{Yu}). They achieved this new family of Alexandrov-Fenchel inequalities for convex hypersurfaces in $\bar{\BB}^{n+1}$ with free boundaries by constructing a locally constrained inverse curvature flow, which is motivated by the Minkowski formula for free boundary hypersurface in \cite[Proposition 5.1]{WX2019}. Subsequently, Weng-Xia \cite{WX2022}  defined the analogous concept of the quermassintegrals for capillary hypersurfaces in $\bar{\BB}^{n+1}$, then they obtained the Alexandrov-Fenchel type inequalities and Gauss-Bonnet-Chern theorem in the capillary setting of $\bar{\BB}^{n+1}$. Very recently, Wang-Weng-Xia \cite{WWX1} introduced the quermassintegrals for compact hypersurfaces with capillary boundary in the Euclidean half-space $\ol{\RR^{n+1}_+}$, and further derived the Alexandrov-Fenchel inequalities for those capillary hypersurfaces. It turns out those new quantities in $\ol{\RR^{n+1}_+}$ can also be interpreted from the viewpoint of convex geometry, as shown in \cite[Section 2.2]{MWWX}.  For more related results, we recommend the readers refer to \cite{HWYZ1, HWYZ2, LS16, LS17, Maggi, MWW, MWWX, MW2023, QWX, WWX1, WWX2, WX20} and references therein.

\ 

Based on the aforementioned results, a natural question arises regarding the corresponding geometric variational problem, specifically the \textit{isoperimetric type problems}, for compact hypersurface with non-empty boundaries in hyperbolic space. This paper's primary objective is to first introduce the quermassintegrals for capillary hypersurfaces supported on totally geodesic hyperplanes in hyperbolic space $\HH^{n+1}$. Subsequently, we establish the Alexandrov-Fenchel inequalities (a higher-order isoperimetric type inequality) for these quantities in $\HH^{n+1}$. To describe our results, we introduce some notations and definitions. We employ the  Poincar\'e ball model to represent the hyperbolic space  $\HH^{n+1}$,  denoted by $\left(\BB^{n+1}, \bar g\right)$ with
	\begin{eqnarray*}
		\BB^{n+1}=\{x\in\RR^{n+1}: |x|<1\}, \quad \bar g= e^{2u}\delta, ~~~~e^{2u}:=\frac{4}{(1-|x|^2)^2},
	\end{eqnarray*}
     where $\delta$ is the standard Euclidean metric. The supported totally geodesic hyperplane is given by   $$\mathcal{H}
 :=\{x\in \BB^{n+1}:\delta(x, E_{n+1})=0\},$$
  where $E_{n+1}=(0,\cdots, 0, 1)$ is the $(n+1)$-th coordinate basis in $\BB^{n+1}$. We further denote
 \begin{eqnarray*}
     \mathcal{H}^{+}:=\{x\in \mathbb{B}^{n+1}:\delta(x, E_{n+1})\geq 0\}.
 \end{eqnarray*}
  Let $\Sigma$ be an embedded hypersurface  in ${\mathcal{H}^{+}}$ satisfying \begin{eqnarray}\label{capi-condition}
      {\rm{int}}(\Sigma)\subset {\rm int}(\H^{+})\quad \text{and}\quad  \partial\Sigma \subset \H.
  \end{eqnarray}
 We denote $\widehat{\Sigma}$ as the bounded domain enclosed by $\Sigma$ and the totally geodesic hyperplane $\H$ in $\H^+$
 and $\widehat{\partial \S}$ as the bounded domain enclosed by $\partial \Sigma$  inside $\H$, see Figure \ref{fig1}. Without loss of generality, throughout this paper, we assume that the origin point $O\in \text{int}(\widehat{\partial\S})$.
 \begin{defn}\label{def1.1}
	A compact hypersurface $\S\subset  {{\H}^{+}}$ is called a capillary hypersurface if 
	it satisfies \eqref{capi-condition} and intersects with $\H$ at a constant contact angle $\theta\in (0,\pi)$ along $\p\S\cap \H$.
\end{defn}
The simplest example of  capillary hypersurface in $\HH^{n+1}$ is a family of geodesic spherical caps lying entirely in ${\H^{+}}$ and intersecting with $\H$ at a constant contact angle $\theta\in (0,\frac{\pi}{2})$, which is given by  
 \begin{eqnarray}\label{static cap model}
		\mathcal{C}_{ \theta,r_{0}}:=\left\{x\in {\H^+}: |x+r_{0}\cos\theta E_{n+1}|= r_{0} \right\},~\text{for}~0<r_{0}<\frac{1}{\sin\theta}.
	\end{eqnarray} It is clear that the constraint $0<r_0< \frac 1{ \sin\theta}$ is a necessary and sufficient condition for $\mathcal{C}_{ \theta,r_{0}}$ lying in the unit ball.

Inspired by the recent progress about the capillary hypersurface in Euclidean space as \cite{SWX, WX2022} and \cite{WWX1}, we first introduce a family of new geometric quantities (Quermassintegrals) $\mathcal{A}_{k,\theta}(\wh\S)$ for capillary hypersurface $\S$ in $\HH^{n+1}$ in this paper.  
Before that, we fix some notations for the $(n+1)$-dimensional convex body $\wh\S\subset \HH^{n+1}$. Parallel to \eqref{quer for closed0} and \eqref{quer for compact hypersurface}, we denote   
\begin{eqnarray}\label{quer for wh Sigma}
\begin{aligned}
    &\mathcal{W}_{0}(\wh\S):=|\wh\S|,\quad ~~ \mathcal{W}_{1}(\wh\S):=\frac{1}{n+1}|\S|,%~~ \W_{n+1}(\wh\S)=\frac{|\SS^n|}{n+1}, 
    \\
    &\mathcal{W}_{k+1}(\wh\S)=\frac{1}{n+1}\int_{\S}H_{k}dA-\frac{k}{n+2-k}\mathcal{W}_{k-1}(\wh\S), ~1\leq k\leq n-1,\notag
    \end{aligned}
    \end{eqnarray} where $H_{k}$ is the normalized $k$-th mean curvature of $\S\subset \mathcal{H}^{+}$. Similarly, we have the quermassintegrals for the $n$-dimensional convex body $\widehat{\partial\Sigma}\subset \H\subset \HH^{n+1}$, which  are defined by 
\begin{eqnarray*}
&&\mathcal{W}_{0}^\H(\widehat{\p\Sigma}):=|\widehat{\p\Sigma}|,\quad \quad \mathcal{W}_{1}^\H(\widehat{\p\Sigma}):=\frac{1}{n}|\p\Sigma|,%~~\W_n^\H (\wh{\p\S})= \frac    {|\SS^{n-1}|}{n}
\\ && \mathcal{W}_{k+1}^\H(\widehat{\p\Sigma})=\frac{1}{n}\int_{\p\Sigma}H_{k}^{\p\S} ds-\frac{k}{n+1-k}\mathcal{W}^\H_{k-1}(\widehat{\p\Sigma}), ~1\leq k\leq n-2,
\end{eqnarray*}where $H_k^{\p\S}:=\frac{1}{\binom{n-1}{k}}  \s^{\p \S}_k $ is the normalized $k$-th mean curvature of $\p\S\subset \H$.

Now we are ready to introduce the quermassintegrals $\mathcal{A}_{k,\theta}(\wh\S)$ for capillary hypersurface $\S$ in $\HH^{n+1}$ as 
\begin{eqnarray*}
    \mathcal{A}_{0, \theta}(\widehat{\Sigma}):=\W_0(\widehat{\Sigma}),~~
    \mathcal{A}_{1, \theta}(\widehat{\Sigma})  
    :=\W_1(\wh\S)-\frac{\cos\theta}{n+1}\W_0^\H(\wh{\p\S}),
    \end{eqnarray*}
   % \begin{eqnarray}        \A_{n+1,\theta}(\wh\S):=\W_{n+1}(\wh\S)+\cdots    \end{eqnarray}
and for $1\leq k\leq n-1$,
    \begin{eqnarray}
         \mathcal{A}_{k+1,\theta}(\widehat{\Sigma})
    :=\mathcal{W}_{k+1}(\widehat{\Sigma})
    +\frac{\cos\theta}{n+1}\sum\limits_{l=0}^{[\frac{k}{2}]}(-1)^{l-1}(\sin\theta)^{k-2l}\cdot\mathcal{W}_{k-2l}^\H (\widehat{\partial\Sigma})
    \prod_{s=0}^{l-1}\frac{k-2s}{n-k+2(s+1)},~~~~~~~\label{A-k,theta}
    \end{eqnarray}
where we have used the convention that $\prod\limits_{s=0}^{-1} \cdot =1$. 

   When $\theta=\frac \pi 2$, it is easy to see that $\A_{k+1,\theta}(\wh\S)= \W_{k+1}(\wh\S)$ for all $-1\leq k\leq n$, which make us to expect that $\mathcal{A}_{k+1, \theta}(\widehat{\S})$ would be the correct capillary counterpart of the quermassintegrals for the closed hypersurfaces in $\HH^{n+1}$. 
Besides, the following first variational formula is the motivation for us to define $\mathcal{A}_{k,\theta}(\wh\S)$  as the quermassintegrals for capillary hypersurface in $\HH^{n+1}$. %with capillary boundary supported on $\H$.
\begin{thm}\label{thm1}
    Let $\Sigma_{t}\subset  {{\H}^{+}}$ be a family of smooth  capillary hypersurfaces,  given by embeddings $x(\cdot, t): M\rightarrow  {{\H}^{+}} $ and satisfying
    \begin{eqnarray*}\label{general flow}
       (\partial_{t} x )^{\perp}=f\nu,
    \end{eqnarray*}
    for some smooth function $f$. Then for $0\leq k\leq n$,  
    \begin{eqnarray*}\label{variation formula}
        \frac{d}{dt}\mathcal{A}_{k,\theta}(\wh{\S_t})=\frac{n+1-k}{n+1}\int_{\Sigma_{t}}f H_{k}dA_{t}.
    \end{eqnarray*} 
  \end{thm}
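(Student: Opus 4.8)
The plan is to prove the formula by induction on $k$, after first recasting the definition \eqref{A-k,theta} into a recursion that peels off the correction terms one order at a time. A bookkeeping manipulation of the alternating sum in \eqref{A-k,theta} — reindex $l\mapsto l+1$ in the part that comes from $\mathcal{A}_{k-1,\theta}$ and use $n+2-k=n-k+2$, so that the coefficients $\tfrac{k}{n-k+2}$ telescope — gives, for $1\le k\le n-1$,
\[
\mathcal{A}_{k+1,\theta}(\wh\S)=\frac{1}{n+1}\int_{\S}H_k\,dA-\frac{k}{n+2-k}\,\mathcal{A}_{k-1,\theta}(\wh\S)-\frac{\cos\theta}{n+1}(\sin\theta)^{k}\,\mathcal{W}^{\H}_{k}(\wh{\p\S}).
\]
One also has $\mathcal{A}_{0,\theta}(\wh\S)=|\wh\S|$ and $\mathcal{A}_{1,\theta}(\wh\S)=\tfrac{1}{n+1}|\S|-\tfrac{\cos\theta}{n+1}|\wh{\p\S}|$. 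Differentiating this recursion, in which the index passes from $k-1$ to $k+1$, reduces the theorem to verifying the two base cases $k=0,1$ and carrying out one inductive step.

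I first record the kinematics at the boundary. Write $\p_t x=f\nu+a\mu+Y$ along $\p\S_t$, with $\mu$ the outward conormal of $\p\S_t$ in $\S_t$ and $Y$ tangent to $\p\S_t$; let $\ol N$ be the unit normal of $\H$ and $\ol\mu$ the outward conormal of $\wh{\p\S}$ in $\H$, so that (for a suitable orientation) $\nu=\sin\theta\,\ol\mu+\cos\theta\,\ol N$ and $\mu=\cos\theta\,\ol\mu-\sin\theta\,\ol N$. Since $\p\S_t$ stays in the fixed hyperplane $\H$, $\p_t x$ is tangent to $\H$ there, which forces $a=f\cot\theta$; hence the normal speed of $\p\S_t$ inside $\H$ is $g:=\<\p_t x,\ol\mu\>=f/\sin\theta$, and $\<\p_t x,\mu\>=a=\cos\theta\,g$. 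For $k=0$ the claim is just $\tfrac{d}{dt}|\wh{\S_t}|=\int_{\S_t}f\,dA_t$, since the planar part of $\p\wh{\S_t}$ only slides inside $\H$. For $k=1$, combining $\tfrac{d}{dt}|\S_t|=n\int_{\S_t}fH_1\,dA_t+\int_{\p\S_t}\<\p_t x,\mu\>\,ds_t$ with $\tfrac{d}{dt}|\wh{\p\S_t}|=\int_{\p\S_t}g\,ds_t$ and $\<\p_t x,\mu\>=\cos\theta\,g$, the two boundary integrals cancel and leave $\tfrac{d}{dt}\mathcal{A}_{1,\theta}=\tfrac{n}{n+1}\int_{\S_t}fH_1\,dA_t$.

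For the inductive step, assume the formula for all indices $\le k$ and differentiate the recursion. The $\mathcal{A}_{k-1,\theta}$ term is given by the inductive hypothesis; the $\mathcal{W}^{\H}_{k}(\wh{\p\S_t})$ term is handled by the closed variational formula \eqref{variation of quer} applied inside $\H\cong\HH^{n}$ to the $n$-dimensional domain $\wh{\p\S_t}$, giving $\tfrac{d}{dt}\mathcal{W}^{\H}_{k}(\wh{\p\S_t})=\tfrac{n-k}{n}\int_{\p\S_t}g\,H_k^{\p\S}\,ds_t$; and $\int_{\S_t}H_k\,dA_t$ is handled by the first variation of the curvature integral in $\HH^{n+1}$,
\[
\frac{d}{dt}\int_{\S_t}H_k\,dA_t=\int_{\S_t}\bigl[(n-k)H_{k+1}+kH_{k-1}\bigr]f\,dA_t+\frac{1}{\binom{n}{k}}\int_{\p\S_t}\bigl[T_{k-1}(\n^{\S}f,\mu)+\sigma_k\<\p_t x,\mu\>\bigr]\,ds_t,
\]
where $T_{k-1}$ is the $(k-1)$-th Newton tensor of $\S_t$; this formula follows from the evolution of the second fundamental form under $\p_t x=f\nu$, the Newton identities, and the fact that $T_{k-1}$ is divergence free in the space form $\HH^{n+1}$ (one integrates the $\n_i\n_j f$ term by parts twice; the tangential part of $\p_t x$ additionally contributes the divergence term $\operatorname{div}_{\S}\bigl(\sigma_k(\p_t x)^{\top}\bigr)$). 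After substitution the bulk terms recombine exactly to $\tfrac{n-k}{n+1}\int_{\S_t}fH_{k+1}\,dA_t$ (the $H_{k-1}$ contributions cancel against the $\mathcal{A}_{k-1,\theta}$ term), so the entire statement reduces to the boundary identity
\[
\frac{1}{\binom{n}{k}}\int_{\p\S_t}\bigl[T_{k-1}(\n^{\S}f,\mu)+\sigma_k\<\p_t x,\mu\>\bigr]\,ds_t=\cos\theta\,(\sin\theta)^{k}\,\frac{n-k}{n}\int_{\p\S_t}g\,H_k^{\p\S}\,ds_t.
\]

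This last identity is the main obstacle and is exactly where the constant contact angle is used. Along $\p\S_t$ I would establish three consequences of the angle being the constant $\theta$ and of $\H$ being totally geodesic. First, $\mu$ is a principal direction of $\S_t$, so in an adapted orthonormal frame $\{e_1,\dots,e_{n-1},\mu\}$ the shape operator is block diagonal and $T_{k-1}(e_i,\mu)=0$, whence $T_{k-1}(\n^{\S}f,\mu)=(\n_\mu f)\,T_{k-1}(\mu,\mu)$. Second, $h^{\S}(e_i,e_j)=\sin\theta\,h^{\p\S}(e_i,e_j)$ — since $\bn_{e_i}e_j$ has no $\ol N$-component when $\H$ is totally geodesic, projecting onto $\nu=\sin\theta\,\ol\mu+\cos\theta\,\ol N$ gives this — so $T_{k-1}(\mu,\mu)=(\sin\theta)^{k-1}\sigma^{\p\S}_{k-1}$ and $\sigma_k|_{\p\S}=(\sin\theta)^{k}\sigma^{\p\S}_{k}+h^{\S}(\mu,\mu)\,(\sin\theta)^{k-1}\sigma^{\p\S}_{k-1}$. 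Third, the Robin-type relation $\n_\mu f=-\cot\theta\,h^{\S}(\mu,\mu)\,f$, obtained by differentiating $\<\nu,\ol N\>=\cos\theta$ in $t$ and using that $\p_t\ol N=\bn_{\p_t x}\ol N=0$ along $\p\S_t$ because $\p_t x$ is tangent to the totally geodesic $\H$. Substituting these, together with $\<\p_t x,\mu\>=\cos\theta\,g$ and $g=f/\sin\theta$, the $h^{\S}(\mu,\mu)$-terms cancel and the left-hand side collapses to $\tfrac{\cos\theta(\sin\theta)^{k}}{\binom{n}{k}}\int_{\p\S_t}g\,\sigma^{\p\S}_{k}\,ds_t$; since $\sigma^{\p\S}_{k}=\binom{n-1}{k}H_k^{\p\S}$ and $\binom{n-1}{k}/\binom{n}{k}=(n-k)/n$, this is exactly the right-hand side, closing the induction. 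The delicate points are the cancellation of the unwanted $h^{\S}(\mu,\mu)$-terms and keeping track of all the $\sin\theta$, $\cos\theta$ and binomial factors so that they match the $t$-derivative of the correction sum in \eqref{A-k,theta}; this matching is precisely what the combinatorial coefficients $\prod_{s=0}^{l-1}\frac{k-2s}{n-k+2(s+1)}$ in that definition are engineered to achieve.
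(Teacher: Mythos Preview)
Your approach is correct and uses the same key ingredients as the paper: the base cases $k=0,1$, the boundary kinematics $\T|_{\p M}=f\cot\theta\,\mu$ and the induced speed $g=f/\sin\theta$ on $\p\S_t$, the closed variational formula \eqref{variation of quer} applied inside $\H$, the first variation of $\int_{\S_t}H_k\,dA_t$ with its boundary term, and the capillary identities from Proposition~\ref{basic-capillary}. The organization differs slightly. You first compress the definition \eqref{A-k,theta} into the clean two-step recursion
\[
\mathcal{A}_{k+1,\theta}(\wh\S)=\frac{1}{n+1}\int_{\S}H_k\,dA-\frac{k}{n+2-k}\,\mathcal{A}_{k-1,\theta}(\wh\S)-\frac{\cos\theta}{n+1}(\sin\theta)^{k}\,\mathcal{W}^{\H}_{k}(\wh{\p\S}),
\]
and induct directly on $\tfrac{d}{dt}\mathcal{A}_{k+1,\theta}$; the paper instead inducts on the more elaborate formulas \eqref{even case}--\eqref{odd case} for $\tfrac{d}{dt}\mathcal{W}_{k+1}(\wh{\S_t})$ and then feeds them back into the definition. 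Your recursion is a genuine simplification and makes the cancellation of the $H_{k-1}$ bulk terms transparent.

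Two sign slips cancel in your boundary computation and should be fixed. Integrating $-\int_{\S}T_{k-1}^{ij}\nabla_i\nabla_j f$ by parts produces $-\int_{\p\S}T_{k-1}(\nabla f,\mu)$, not the $+$ you wrote. Independently, differentiating the contact-angle condition gives $\nabla_\mu f=+\cot\theta\,h(\mu,\mu)\,f$ (this is the paper's \eqref{boundary condition}), not the $-$ you wrote; the argument you sketch, using $\p_t\nu=-\nabla f+h(e_i,\T)e_i$ and $\ol N=\cos\theta\,\nu-\sin\theta\,\mu$ in your orientation, yields the $+$ sign. With both signs corrected the $h(\mu,\mu)$ terms still cancel and the boundary identity you need,
\[
\frac{1}{\binom{n}{k}}\int_{\p\S_t}\bigl[-T_{k-1}(\nabla f,\mu)+\sigma_k\langle\T,\mu\rangle\bigr]\,ds_t=\cos\theta\,(\sin\theta)^k\,\frac{n-k}{n}\int_{\p\S_t}g\,H_k^{\p\S}\,ds_t,
\]
holds exactly as you claim.
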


Furthermore, we establish the Alexandrov-Fenchel inequalities for the quermassintegrals $\A_{k,\theta}(\wh\S)$, when $\S$ is a convex capillary hypersurface in $ \HH^{n+1}$ and $\theta\in(0, \frac \pi 2]$. % concerning $\A_{k,\theta}$. 
\begin{thm}\label{thm AF} 
    For $n\geq 2$, let $\Sigma\subset {{\H}^{+}}$ be a convex capillary hypersurface with  contact  angle $\theta\in (0,\frac{\pi}{2}]$. Assume that
    \begin{eqnarray}\label{add_con}
       \hbox{there exists  a geodesic spherical cap 
         $\C_{\theta, r_0}$ such that $\Sigma\subset\widehat{\C_{\theta, r_0 }}$.~~~~} 
    \end{eqnarray} Then there holds 
    \begin{eqnarray}\label{A-F inqeu}
        \mathcal{A}_{n,\theta}(\widehat{\Sigma})\geq (f_{n,\theta}\circ f_{k,\theta}^{-1})\left(\mathcal{A}_{k,\theta}(\widehat{\Sigma})\right),\quad \forall~ 1~\leq k\leq n-1,
    \end{eqnarray}
 where $f_{k,\theta}:[0,\infty)\rightarrow \RR_{+}$ is a strictly monotone function defined by $$f_{k,\theta}(r):=\mathcal{A}_{k, \theta}(\widehat{\mathcal{C}_{\theta, r}}),$$ 
 where $\mathcal{C}_{\theta, r}$ is the geodesic  spherical cap given by \eqref{static cap model}. Moreover, equality holds if and only if $\Sigma$ is a geodesic spherical cap. 
\end{thm}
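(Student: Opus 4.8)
The plan is to adapt the locally constrained inverse curvature flow of Brendle--Guan--Li \eqref{BGL-flow} (in the critical case $k=n$) to the capillary setting, driving a convex capillary hypersurface to a geodesic spherical cap while using the quermassintegrals $\A_{k,\theta}$ and the first variation formula of Theorem~\ref{thm1}. The first step is a \emph{capillary Minkowski formula} in $\HH^{n+1}$. Starting from the conformal Killing field $X=\phi(\rho)\p_\rho$ --- which is tangent to the totally geodesic hyperplane $\H$ because $\H$ passes through the origin --- one integrates the usual divergence identity (with Newton tensors) on $\Sigma$ and collects the boundary contribution on $\p\Sigma$; by the constant contact angle condition this term equals $\cos\theta$ times a curvature integral of $\p\Sigma\subset\H\cong\HH^{n}$. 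Absorbing these terms produces a modified (capillary) support function $V_\theta$, the analogue of $v$, for which
\[
\int_\Sigma\big(\phi' H_{k-1}-V_\theta H_k\big)\,dA=0,\qquad 1\le k\le n,
\]
holds for every capillary hypersurface $\Sigma$ with contact angle $\theta$. I would then study the flow with normal speed $\big(\tfrac{\phi'}{F}-V_\theta\big)\nu$, where $F=\tfrac{H_n}{H_{n-1}}$, with $\p\Sigma_t$ constrained to $\H$ and a tangential velocity chosen so that the contact angle stays equal to $\theta$; after a standard reparametrization this is a parabolic problem with an oblique boundary condition.

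For the monotonicity, Theorem~\ref{thm1} with $f=\tfrac{\phi'}{F}-V_\theta$ and the capillary Minkowski formula with $k=n$ give $\tfrac{d}{dt}\A_{n,\theta}(\wh{\Sigma_t})=\tfrac1{n+1}\int_{\Sigma_t}(\phi' H_{n-1}-V_\theta H_n)\,dA_t=0$, so $\A_{n,\theta}$ is preserved. For $1\le k\le n-1$, Theorem~\ref{thm1} together with the Minkowski formula of index $k$ give
\[
\frac{d}{dt}\A_{k,\theta}(\wh{\Sigma_t})=\frac{n+1-k}{n+1}\int_{\Sigma_t}\phi'\Big(\frac{H_{n-1}H_k}{H_n}-H_{k-1}\Big)\,dA_t\ \ge\ 0,
\]
since $\phi'=\cosh\rho>0$ and the Newton--MacLaurin inequalities give $H_{n-1}H_k\ge H_n H_{k-1}$, with equality at a point only if $\Sigma_t$ is umbilic there. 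Granting long-time existence and smooth convergence $\Sigma_t\to\C_{\theta,r_\infty}$, the preservation of $\A_{n,\theta}$ forces $r_\infty=f_{n,\theta}^{-1}\big(\A_{n,\theta}(\wh\Sigma)\big)$, while the monotonicity gives $\A_{k,\theta}(\wh\Sigma)\le\A_{k,\theta}(\wh{\C_{\theta,r_\infty}})=f_{k,\theta}(r_\infty)$; applying the increasing function $f_{n,\theta}\circ f_{k,\theta}^{-1}$ then yields \eqref{A-F inqeu}. For the rigidity, equality in \eqref{A-F inqeu} forces $\A_{k,\theta}$ to be constant along the flow, hence the integrand above vanishes identically, so $\Sigma=\Sigma_0$ is totally umbilic; a totally umbilic capillary hypersurface in $\H^+$ with $\theta\in(0,\tfrac\pi2]$ is a geodesic spherical cap $\C_{\theta,r}$.

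It remains to prove long-time existence and convergence. I would show in turn that the flow preserves: the containment $\Sigma_t\subset\wh{\C_{\theta,r_0}}$, via \eqref{add_con}, the avoidance principle against the static caps $\C_{\theta,r}$ (which are fixed points of the flow), and a lower barrier using $O\in{\rm int}(\wh{\p\Sigma})$, yielding uniform $C^0$ bounds; star-shapedness together with a gradient estimate, where $\theta\in(0,\tfrac\pi2]$ is used to sign the boundary gradient terms; and strict convexity, via a tensor maximum principle for the Weingarten operator. With convexity and $C^1$ bounds in hand, $F=H_n/H_{n-1}$ is uniformly elliptic and pinched between positive constants, so Krylov--Safonov and Schauder estimates up to the boundary give uniform $C^\infty$ bounds, hence long-time existence; smooth subconvergence to a static cap then follows from the fact that $\A_{1,\theta}$ is monotone and bounded, which forces $\int_{\Sigma_t}\phi'\big(\tfrac{H_{n-1}H_1}{H_n}-H_0\big)\,dA_t\to0$ and hence the flow to become asymptotically umbilic.

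I expect the decisive obstacle to be the convexity preservation up to the capillary boundary: while interior convexity preservation for locally constrained flows of this type is by now standard, controlling the boundary terms in the maximum principle for the second fundamental form along $\p\Sigma$ --- where one must combine the Codazzi equations, the contact angle relations, and the convexity of $\p\Sigma$ inside $\H$ with the sign $\cos\theta\ge0$ --- is the main technical point, and is precisely why the theorem is stated only for $\theta\in(0,\tfrac\pi2]$ and under the barrier hypothesis \eqref{add_con}. A secondary difficulty is establishing the correct capillary Minkowski formula and the correct $V_\theta$, i.e.\ identifying the Killing field and boundary geometry that make all boundary contributions collapse into the $\cos\theta$-corrections already built into $\A_{k,\theta}$.
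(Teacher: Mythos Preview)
Your overall strategy --- a Brendle--Guan--Li type locally constrained flow with $F=H_n/H_{n-1}$, monotonicity of $\A_{k,\theta}$ via a capillary Minkowski formula and Theorem~\ref{thm1}, long-time existence and convergence to a cap, then the inequality and rigidity --- is exactly the paper's. However, several of your implementation guesses are off in ways worth correcting.

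First, the capillary Minkowski formula does not modify the support function; it modifies the \emph{potential}. The correct identity (due to Chen--Pyo, Proposition~\ref{Minkowski}) is
\[
\int_\Sigma\Big[\big(V_0-\cos\theta\,\bar g(Y_{n+1},\nu)\big)H_{k-1}-\bar g(x,\nu)\,H_k\Big]\,dA=0,
\]
where $Y_{n+1}$ is the Killing field \eqref{Y} tangent to $\H$. Accordingly the flow speed is $f=\frac{V_0-\cos\theta\,\bar g(Y_{n+1},\nu)}{F}-\bar g(x,\nu)$, and the monotonicity computation uses positivity of $V_0-\cos\theta\,\bar g(Y_{n+1},\nu)$ (see \eqref{ellip}) rather than of $\phi'$; your Newton--MacLaurin step then goes through verbatim.

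Second, the paper does \emph{not} preserve convexity via a tensor maximum principle on the Weingarten operator. Instead it obtains a two-sided bound on $F$ by scalar maximum principles: the upper bound from $\mathcal L F\le 0$ modulo $\nabla F$ with $\nabla_\mu F=0$, and the lower bound by applying the maximum principle to $P=\tilde v\,F$, where $\tilde v=\dfrac{\bar g(x,\nu)}{V_0-\cos\theta\,\bar g(Y_{n+1},\nu)}$ is a ``capillary support function'' satisfying $\nabla_\mu\tilde v=0$. Since $F=H_n/H_{n-1}$ is the harmonic mean of the principal curvatures, $F\ge c>0$ already forces $\kappa_{\min}\ge c/n$, so convexity is preserved without a tensor argument. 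The upper curvature bound then comes from a scalar maximum principle on $H$.

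Third, the hypothesis $\theta\in(0,\tfrac\pi2]$ is not used for convexity at the boundary as you anticipated; it enters only once, to show $\nabla_\mu H\le 0$ on $\partial\Sigma_t$ (Proposition~\ref{pro H evo}), which lets the maximum of $H$ be interior. This is the sole place the angle restriction is needed. The barrier hypothesis \eqref{add_con} is used exactly as you said, for the outer $C^0$ barrier via the avoidance principle.
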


In other words, the maximum value of $\A_{k,\theta}(\wh\S)$ among all the convex capillary hypersurfaces $\S\subset \HH^{n+1}$ with a fixed value $\A_{n,\theta}(\wh\S)$ is achieved by the geodesic spherical caps. This solves an isoperimetric type problem for compact hypersurfaces with non-empty boundaries in $\HH^{n+1}$. In particular, when $\theta=\frac \pi 2$, by a simple reflection argument of $\S$ along $\H$, \eqref{A-F inqeu} recovers the Alexandrov-Fenchel inequalities between $\W_k$ and $\W_n$ for convex closed hypersurface in $\HH^{n+1}$, as shown in \cite[Theorem 1.4]{BGL}. 
Furthermore, when $n=2$, from \eqref{A-F inqeu}, we obtain a Minkowski-type inequality for convex capillary surface $\S$ in $\HH^3$.
\begin{cor}\label{cor-Minkowski ineq}
    Let $\Sigma\subset \H^+$ be a convex capillary surface with contact angle $\theta\in(0, \frac \pi 2]$, and $\S$ satisfies \eqref{add_con}. Then
	\begin{eqnarray}\label{minkowski-inequ}
 \int_\S H dA\geq  2|\widehat{\S}|+  6~ (f_{2,\theta}\circ f_{1,\theta}^{-1})\left(\frac{1}{3}\left(|\S|-\cos\theta |\wh{\p\S}|\right)\right) +\sin\theta\cos\theta|\p\S|.
	\end{eqnarray}
Moreover,  equality holds if and only if $\Sigma$ is a geodesic spherical cap. \end{cor}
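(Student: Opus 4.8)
The plan is to read off Corollary \ref{cor-Minkowski ineq} as the special case $n=2$, $k=1$ of Theorem \ref{thm AF}; the only real work is to make the quermassintegrals $\A_{1,\theta}(\wh\S)$ and $\A_{2,\theta}(\wh\S)$ fully explicit in dimension two and then to rearrange. Since $\S$ is by hypothesis a convex capillary surface with $\theta\in(0,\tfrac\pi2]$ satisfying \eqref{add_con}, Theorem \ref{thm AF} applies and yields
\[
    \A_{2,\theta}(\wh\S)\ \geq\ (f_{2,\theta}\circ f_{1,\theta}^{-1})\!\left(\A_{1,\theta}(\wh\S)\right).
\]

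First I would compute $\A_{1,\theta}(\wh\S)$. By definition $\A_{1,\theta}(\wh\S)=\W_1(\wh\S)-\tfrac{\cos\theta}{n+1}\W_0^\H(\wh{\p\S})$, and with $n=2$ one has $\W_1(\wh\S)=\tfrac13|\S|$ and $\W_0^\H(\wh{\p\S})=|\wh{\p\S}|$, so $\A_{1,\theta}(\wh\S)=\tfrac13\big(|\S|-\cos\theta\,|\wh{\p\S}|\big)$, which is exactly the argument of $f_{1,\theta}^{-1}$ appearing in \eqref{minkowski-inequ}. Next I would compute $\A_{2,\theta}(\wh\S)$ from \eqref{A-k,theta} with $k=1$: the sum there runs over $0\leq l\leq[\tfrac12]=0$, so only the $l=0$ term contributes, and with the convention $\prod_{s=0}^{-1}=1$ it equals $-\sin\theta\,\W_1^\H(\wh{\p\S})$; hence $\A_{2,\theta}(\wh\S)=\W_2(\wh\S)-\tfrac{\sin\theta\cos\theta}{3}\W_1^\H(\wh{\p\S})$. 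Inserting $\W_2(\wh\S)=\tfrac13\int_\S H_1\,dA-\tfrac13|\wh\S|$, the relation $H_1=\tfrac{H}{2}$ valid for surfaces ($n=2$), and $\W_1^\H(\wh{\p\S})=\tfrac12|\p\S|$, gives
\[
    \A_{2,\theta}(\wh\S)=\tfrac16\int_\S H\,dA-\tfrac13|\wh\S|-\tfrac{\sin\theta\cos\theta}{6}|\p\S|.
\]

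Finally I would substitute both identities into the displayed inequality above, multiply through by $6$, and solve for $\int_\S H\,dA$; this produces precisely \eqref{minkowski-inequ}. The rigidity assertion transfers verbatim from Theorem \ref{thm AF}: equality in \eqref{minkowski-inequ} is equivalent to equality in \eqref{A-F inqeu}, which holds if and only if $\S$ is a geodesic spherical cap (and conversely each $\C_{\theta,r}$ attains it). There is no genuine analytic difficulty in this argument; the only point demanding care is the bookkeeping in the finite sum of \eqref{A-k,theta} together with the consistent use of the normalization $H=2H_1$ for surfaces, which is what makes the numerical constants $2$, $6$ and the coefficient $\sin\theta\cos\theta$ come out exactly as stated.
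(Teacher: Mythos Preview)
Your proof is correct and follows essentially the same approach as the paper: apply Theorem \ref{thm AF} with $n=2$, $k=1$, unpack the explicit formulas for $\A_{1,\theta}(\wh\S)$ and $\A_{2,\theta}(\wh\S)$, and rearrange. The paper records this more tersely by simply displaying the identity $\A_{2,\theta}(\wh\S)=\tfrac{1}{n(n+1)}\big(\int_\S H\,dA-n|\wh\S|-\sin\theta\cos\theta\,|\p\S|\big)$ and invoking Theorem \ref{thm AF}, but your detailed bookkeeping is accurate and yields the same conclusion.
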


In order to prove Theorem \ref{thm AF}, we construct a locally constrained inverse curvature flow as described in \eqref{flow with cap-normal} and \eqref{flow with capillary}, inspired by the ideas of Brendle-Guan-Li in \cite{BGL} and the Minkowski formula \eqref{Minkowski formula} by Chen-Pyo \cite{chen} for capillary hypersurfaces in $\HH^{n+1}$. We demonstrate that if the initial capillary hypersurface is strictly convex, the flow exists for all time $t\in[0,\infty)$, preserves the convexity, and smoothly converges to a geodesic spherical cap, as stated in Theorem \ref{flow-result}. A key ingredient to show Theorem \ref{flow-result} is obtaining the uniform curvature estimates, particularly the two-sided uniform bound for $F$. For this, we introduce the  capillary support function $\wt v$ as \eqref{wt v} for capillary hypersurfaces $\S$ in $\HH^{n+1}$, i.e. \begin{eqnarray*}
    \wt v=\frac{\bar g(x,\nu)}{V_0-\cos\theta \bar g(Y_{n+1},\nu)},
\end{eqnarray*} which satisfies a nice homogeneous Neumann boundary value condition along $\p\S$, as shown in \eqref{widetilde v boundary}. Moreover, along the flow \eqref{flow with cap-normal}, we show that quermassintegral $\mathcal{A}_{n,\theta}(\widehat{\Sigma_{t}})$  is preserved, while $\mathcal{A}_{k,\theta}(\widehat{\Sigma_{t}}) $ $(1\leq k\leq n-1)$ is non-decreasing for $t\geq 0$.  This allows us to complete the proof of Theorem \ref{thm AF}, by combining Theorem \ref{flow-result}.  We note that the condition $\theta\leq \frac \pi 2$ is a technical assumption necessary to ensure the boundary curvature estimate of the flow, as seen in \eqref{bdry of H}, which is the only place we used this condition. This angle restriction is similarly required and crucially utilized in \cite{HWYZ1, HWYZ2, WWX1, WX2022} etc. Finally, we point out that the assumption \eqref{add_con} ensures the existence of a geodesic spherical cap that bounds the capillary hypersurface from the exterior, which may not generally be true for convex capillary hypersurfaces  $\S\subset \H^+$ with boundaries  $\p\S$ close to $\H\cap\p \BB^{n+1}$. Given this natural assumption, it is evident that it will be preserved for all evolving convex capillary hypersurfaces by the avoidance principle along the flow \eqref{flow with cap-normal} starting from such initial datum, as stated in Proposition \ref{C0 estimates}.

\

 \textbf{The rest of the article is structured as follows.} In Section \ref{sec2},  we recall some basic properties of elementary symmetric polynomial functions. Subsequently, we introduce relevant notations and basic properties regarding capillary hypersurfaces supported on the geodesic hyperplane $\mathcal{H}$ in $\HH^{n+1}$. Then we present the first variational formula of quermassintegrals $\mathcal{A}_{k,\theta}$ and complete the proof of Theorem \ref{thm1}. In Section \ref{sec3}, we introduce the locally constrained inverse curvature flow \eqref{flow with cap-normal} and analyze the long-time existence and convergence of such flow.  The last section is devoted to proving the Alexandrov-Fenchel inequalities for convex capillary hypersurfaces in  $\HH^{n+1}$, i.e., Theorem \ref{thm AF}.
\vspace{.2cm}

 \section{Quermassintegrals and first variational formula}\label{sec2}
\subsection{Elementary symmetric polynomial functions}\label{sec2.1}
 In this subsection, we recall some well-known properties
of the $k$-th elementary symmetric functions.
   Let $A=\{A_{ij}\}$ be an $n\times n$ symmetric matrix, and $k\in \{1,2,\cdots, n\}$, % is a positive integer, 
    define
    \begin{eqnarray*}\label{k-ele}
        \sigma_{k}(A):=\sigma_{k}(\lambda(A))=\sum\limits_{1\leq i_{1}<i_{2}\cdots< i_{k}\leq n}\lambda_{i_{1}}\lambda_{i_{2}}\cdots \lambda_{i_{k}},
    \end{eqnarray*}
    where $\lambda:=\lambda(A)=(\lambda_{1} ,\cdots,\lambda_{n})$ is the eigenvalues  of $A$.
 We use the convention that $\sigma_0=1$ and $\sigma_k =0$ for $k>n$. Let $H_k(A):=H_k(\lambda(A))$ be the normalization of $\sigma_{k}(\lambda)$ given by $$H_k(\lambda)=\frac{1}{\binom{n}{k}}\s_k(\lambda).$$ Denote  $\sigma _k (\lambda \left| i \right.)$ the symmetric polynomial function of $\s_k(\l)$ with $\lambda_i = 0$ and $\sigma _k (\lambda \left| ij \right.)$ the symmetric polynomial function of $\s_k(\l)$ with $\lambda_i =\lambda_j = 0$.  Recall that G{\aa}rding's cone is defined as
\begin{eqnarray}\label{2.4}
\Gamma_k  = \{ \lambda  \in \mathbb{R}^n :\sigma _i (\lambda ) > 0,~~\forall 1 \le i \le k\}.
\end{eqnarray} 

\begin{lem}\label{prop2.1}Let $\lambda=(\lambda_1,\cdots,\lambda_n)\in\mathbb{R}^n$ and $k
=1, \ldots, n$. Then
	\begin{enumerate}
\item $ \sigma_k(\lambda)=\sigma_k(\lambda|i)+\lambda_i\sigma_{k-1}(\lambda|i),  \quad  \forall 1\leq i \leq n.$
\item $ \sum\limits_{i = 1}^n {\sigma_{k}(\lambda|i)}=(n-k)\sigma_k(\lambda).$
\item $\sum\limits_{i = 1}^n {\lambda_i \sigma_{k-1}(\lambda|i)}=k\sigma_k(\lambda)$.
\item $\sum\limits_{i = 1}^n {\lambda_i^2 \sigma_{k-1}(\lambda|i)}=\s_1(\lambda)\sigma_k(\lambda)-(k+1)\s_{k+1}(\lambda)$.
	\end{enumerate}
\end{lem}

We denote $\sigma _k(A \left|
i \right.)$ the $\sigma_k$ symmetric polynomial function of the matrix 
obtained from $A$ by deleting the $i$-row and
$i$-column and $\sigma _k (A \left| ij \right.)$ the $\s_k$ symmetric polynomial 
function of the matrix from $A$ by deleting the $i,j$-rows and $i,j$-columns. 
\begin{lem}%\label{prop2.2}
Suppose that  $A=\{A_{ij}\}$ is diagonal, and $k$ is a positive integer.
Then %we have
\begin{eqnarray*}
\sigma_{k-1}^{ij}(A)= \begin{cases}
\sigma _{k- 1} (A\left| i \right.), &\text{if } i = j, \\
0, &\text{if } i \ne j,
\end{cases}
\end{eqnarray*}
where $\sigma_{k-1}^{ij}(A):=\frac{{\partial \sigma _k (A)}} {{\partial A_{ij} }}$.
\end{lem}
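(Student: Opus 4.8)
The statement to prove is the standard identity relating the derivative of $\sigma_k$ with respect to a matrix entry, evaluated on diagonal matrices, to the symmetric function of the deleted minor. The plan is to work directly from the definition $\sigma_k(A) = \sum_{1 \le i_1 < \cdots < i_k \le n} M_{i_1 \cdots i_k}(A)$, where $M_{i_1 \cdots i_k}(A)$ denotes the $k\times k$ principal minor of $A$ on the index set $\{i_1,\dots,i_k\}$; this is the coefficient of $t^{n-k}$ in $\det(tI + A)$, so it is polynomial in the entries of $A$ and the partial derivatives $\partial \sigma_k / \partial A_{ij}$ make sense.

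First I would note that each principal minor $M_{S}(A)$ with $S = \{i_1 < \cdots < i_k\}$ depends only on the entries $A_{pq}$ with $p,q \in S$, and that a given entry $A_{ij}$ (with $i \le j$, say) appears only in those minors $M_S$ with $i \in S$ and $j \in S$. Hence $\frac{\partial \sigma_k}{\partial A_{ij}} = \sum_{S \ni i, j} \frac{\partial M_S}{\partial A_{ij}}$. Next I would use the cofactor expansion of a determinant: for a $k\times k$ matrix $B$, $\frac{\partial \det B}{\partial B_{pq}}$ is the $(p,q)$-cofactor of $B$. Applying this with $B$ the submatrix on the index set $S$: when $i = j$, $\frac{\partial M_S}{\partial A_{ii}}$ is the $(i,i)$-cofactor, which for a symmetric (hence here, diagonal) $B$ equals the minor obtained by deleting row and column $i$ from $B$, i.e. $M_{S \setminus \{i\}}(A)$. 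When $i \ne j$, $\frac{\partial M_S}{\partial A_{ij}}$ is the $(i,j)$-cofactor of $B$, and since $A$ is diagonal, $B$ is diagonal; a diagonal matrix has all off-diagonal cofactors equal to zero (deleting row $i$ and column $j$ with $i \ne j$ from a diagonal matrix leaves a matrix with a zero row, hence zero determinant). This already forces the $i \ne j$ case to vanish.

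For the $i = j$ case, summing over all $S$ containing $i$: $\frac{\partial \sigma_k}{\partial A_{ii}} = \sum_{S \ni i} M_{S \setminus \{i\}}(A) = \sum_{S' \subset \{1,\dots,n\}\setminus\{i\}, |S'| = k-1} M_{S'}(A) = \sigma_{k-1}(A \mid i)$, which is exactly the claimed formula. Alternatively, and perhaps more cleanly, I would invoke Lemma \ref{prop2.1}(1): on diagonal $A$ with eigenvalues $\lambda$, $\sigma_k(\lambda) = \sigma_k(\lambda \mid i) + \lambda_i \sigma_{k-1}(\lambda \mid i)$, and since $\sigma_k(\lambda \mid i)$ does not involve $\lambda_i = A_{ii}$ while $\sigma_{k-1}(\lambda \mid i)$ likewise does not, differentiating in $A_{ii}$ gives $\sigma_{k-1}(\lambda \mid i) = \sigma_{k-1}(A \mid i)$ directly — but one must be slightly careful that this computes the derivative along the diagonal direction only, so the honest argument still requires the off-diagonal discussion above.

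I do not anticipate a genuine obstacle here; the only subtlety worth flagging is notational/conceptual rather than mathematical: $\frac{\partial \sigma_k}{\partial A_{ij}}$ is defined by regarding $\sigma_k$ as a polynomial in the $n^2$ independent variables $A_{ij}$ (not restricting to symmetric matrices a priori), and the evaluation ``$A$ is diagonal'' happens after differentiation; with the symmetric convention some authors instead use $\frac{1}{2}(\delta_{ip}\delta_{jq} + \delta_{iq}\delta_{jp})$-type derivatives, which would change the off-diagonal normalization, so I would state the convention explicitly at the outset and then the cofactor computation goes through verbatim. The whole proof is three or four lines once the conventions are pinned down.
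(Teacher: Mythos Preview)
Your argument is correct: the cofactor-expansion route through principal minors is the standard way to establish this identity, and your handling of both cases (diagonal cofactors summing to $\sigma_{k-1}(A\mid i)$, off-diagonal cofactors vanishing on diagonal matrices) is sound. Your remark about the convention for $\partial/\partial A_{ij}$ is also well placed.

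As for comparison: the paper does not supply its own proof of this lemma. It is stated without argument, and the surrounding lemmas are attributed collectively to standard references (Lieberman's book and Spruck's lecture notes). So there is nothing to compare against here beyond noting that your write-up is self-contained where the paper simply cites the literature.
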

\begin{lem}\label{pro2.3}
    The following properties hold.
    \begin{enumerate}
        \item  For $\lambda \in \Gamma_k$ and $k > l \geq 0$, $ r > s \geq 0$, $k \geq r$, $l \geq s$, there holds  the generalized Newton-Maclaurin inequality
\begin{eqnarray*} \label{1.2.6}
\left(\frac{H_{k}(\lambda)}{H_{l}(\lambda)}\right)^{\frac{1}{k-l}}\leq \left(\frac{H_{r}(\lambda)}{H_{s}(\lambda)}\right)^{\frac{1}{r-s}},
\end{eqnarray*}
with equality holds if and only if $\lambda_1 = \lambda_2 = \cdots =\lambda_n >0$.
\item For $0\le l<k\le n$, then $\left(\frac{H_k(\lambda)}{H_{l}(\lambda)}\right)^{\frac{1}{k-l}}$ is a concave function with respect to  $\l\in \Gamma_
k$.
\item For $0\leq l<k\leq n$, and  $F(\lambda)=\left(\frac{H_{k}(\lambda)}{H_{l}(\lambda)}\right)^{\frac{1}{k-l}}$. Assume $\lambda=(\lambda_{1},\cdots, \lambda_{n})$ satisfies $\lambda_{1}\geq \lambda_{2}\geq\cdots\geq \lambda_{n}$. Then 
    \begin{eqnarray*}
      \frac{\partial F(\lambda)}{\partial \lambda_{1}}\leq \frac{\partial F(\lambda)}{\partial\lambda_{2}}\leq \cdots\leq \frac{\partial F(\lambda)}{\partial\lambda_{n}}.
\end{eqnarray*}
\end{enumerate}
\end{lem}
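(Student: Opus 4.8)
The plan is to observe that (1) and (2) are classical facts about elementary symmetric functions restricted to the (convex) G{\aa}rding cone $\Gamma_k$, for which I would invoke the standard references, and then to deduce (3) from the concavity in (2) together with the symmetry of $F$.

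For (1), I would start from the Newton inequalities $H_{m-1}(\lambda)H_{m+1}(\lambda)\le H_m(\lambda)^2$, valid for every $\lambda\in\RR^n$ and with equality, for $\lambda\in\Gamma_k$, precisely when $\lambda_1=\cdots=\lambda_n$. On $\Gamma_k$ one has $H_1,\dots,H_k>0$, so these inequalities yield the chain $0<\dfrac{H_k}{H_{k-1}}\le\dfrac{H_{k-1}}{H_{k-2}}\le\cdots\le H_1$. The generalized Newton--Maclaurin inequality
$$\left(\frac{H_k(\lambda)}{H_l(\lambda)}\right)^{\frac1{k-l}}\le\left(\frac{H_r(\lambda)}{H_s(\lambda)}\right)^{\frac1{r-s}},\qquad \lambda\in\Gamma_k,\ k\ge r,\ l\ge s,\ k>l,\ r>s,$$
then follows by a standard induction on $(k-l)+(r-s)$ using these one-step comparisons, and tracing the equality case back through the induction produces at least one instance of equality in a Newton inequality, which combined with positivity forces $\lambda_1=\cdots=\lambda_n>0$. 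For (2), the concavity of $\lambda\mapsto\left(H_k(\lambda)/H_l(\lambda)\right)^{1/(k-l)}$ on $\Gamma_k$ is also classical; a self-contained route is to first treat the consecutive case $l=k-1$, where concavity of $H_k/H_{k-1}$ on $\Gamma_k$ is a consequence of G{\aa}rding's theory of hyperbolic polynomials, and then pass to general $l<k$ by an inductive argument using homogeneity. I would quote these from the literature (e.g. \cite{WX2014, LHW2021} and the references therein) rather than reprove them.

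For (3), fix $\lambda=(\lambda_1,\dots,\lambda_n)\in\Gamma_k$ with $\lambda_1\ge\lambda_2\ge\cdots\ge\lambda_n$, fix $i<j$, and let $\lambda^{ij}$ be the vector obtained from $\lambda$ by interchanging its $i$-th and $j$-th entries; since $\Gamma_k$ is permutation-invariant, $\lambda^{ij}\in\Gamma_k$, and since $F$ is symmetric, $F(\lambda^{ij})=F(\lambda)$, $\partial_i F(\lambda)=\partial_j F(\lambda^{ij})$ and $\partial_j F(\lambda)=\partial_i F(\lambda^{ij})$ (while $\partial_m F$ agrees at $\lambda$ and $\lambda^{ij}$ for $m\neq i,j$). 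Because $F$ is concave on the convex set $\Gamma_k$ by (2), its gradient is monotone: $\langle\nabla F(x)-\nabla F(y),\,x-y\rangle\le 0$ for all $x,y\in\Gamma_k$. Taking $x=\lambda$, $y=\lambda^{ij}$ and noting $x-y=(\lambda_i-\lambda_j)(e_i-e_j)$, this reduces to
$$2(\lambda_i-\lambda_j)\big(\partial_i F(\lambda)-\partial_j F(\lambda)\big)\le 0.$$
Hence $\partial_i F(\lambda)\le\partial_j F(\lambda)$ whenever $\lambda_i>\lambda_j$, and $\partial_i F(\lambda)=\partial_j F(\lambda)$ whenever $\lambda_i=\lambda_j$ (directly from symmetry); applying this with consecutive indices under the chosen ordering gives $\partial_1 F(\lambda)\le\partial_2 F(\lambda)\le\cdots\le\partial_n F(\lambda)$, as claimed. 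The only mildly technical point in this scheme is the bookkeeping of the equality cases in the induction for (1); everything else is either standard and citable ((1) and (2)) or a two-line consequence of concavity ((3)).
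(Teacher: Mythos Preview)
Your proposal is correct. The paper does not give its own proof of this lemma at all: it simply cites \cite[Chapter~XV, Section~4]{Lie} and \cite[Lemma~2.10, Theorem~2.11, Lemma~1.5]{Spruck} for all three items. So for (1) and (2) you are doing exactly what the paper does (citing the standard literature, with a brief sketch of the mechanism), just with different references.

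For (3) you actually go a bit further than the paper: rather than citing a direct computation (as in \cite[Lemma~1.5]{Spruck}), you derive the monotonicity of the partials from the concavity in (2) plus the permutation symmetry of $F$, via the gradient-monotonicity characterization of concave functions on the convex cone $\Gamma_k$. This is a clean, conceptual route and is fully rigorous once (2) is in hand; it has the advantage of making clear that (3) is not an independent fact but a formal consequence of (2). The trade-off is that it relies on $\Gamma_k$ being convex and $F$ being $C^1$ there, both of which are standard but should be mentioned; the cited direct computation avoids invoking concavity and works at the level of explicit formulas for $\partial H_m/\partial\lambda_i$.
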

For proof of the above Lemmas, see e.g. \cite[Chapter \uppercase\expandafter{\romannumeral 15}, Section~4]{Lie}  and  \cite[Lemma~2.10, Theorem~2.11, Lemma~1.5]{Spruck}.

\subsection{Notation and conventions}
We use $D$ to denote the Levi-Civita connection of $\H^{+}$ w.r.t the metric $\bar{g}$, 
 and $\n$  represents the Levi-Civita connection on $\S$ w.r.t the induced metric $g$ from the immersion $x$. 
	The operators $\div, \Delta$, and $\n^2$ are the divergence, Laplacian, and Hessian operators on $\S$ respectively.
	The second fundamental form $h$ of $x$  is defined by
	$$D_{X}Y=\n_{X}Y- h(X,Y)\nu.$$
	%Denote $\k=(\k_1, \k_2,\cdots, \k_n)$ be the set of principal curvatures, i.e, the set of eigenvalues of $h$. 
The Weingarten operator is defined via
$\bar g(\W(X),Y)=h(X,Y),$
and the Weingarten equation is $$D_{X}\nu=\W(X).$$
 We shall use the convention of Einstein summation. For convenience the components of the Weingarten map $\W$ are denoted by $(h^{i}_{j})=(g^{ik}h_{kj})$, and $|h|^2$ be the norm square of the second fundamental form, that is $|h|^2=g^{ik}h_{kl}h_{ij}g^{jl}$, where  $(g^{ij})$ is the inverse of $(g_{ij})$. We use the metric tensor $(g_{ij})$ and its inverse $(g^{ij})$ to lower down and raise up the indices of tensor fields on $\S$. 

  \subsection{Convex capillary hypersurfaces in $\HH^{n+1}$}
     Let  $\S\subset {{\H}^{+}}$ be a smooth capillary hypersurface, given by the embedding $x: M\to  {{\H}^{+}}$,      if without cause confusion, we do not distinguish $\Sigma$  and the embedding $x$.
   Let $\mu$ be the unit outward co-normal of $\p\S$ in $\S$ and  $\ov{\nu}$ be the unit normal to $\partial\Sigma$ in $\H$ such that $\{\nu,\mu\}$ and $\{\ov{\nu},\ov{N}\}$ have the same orientation in normal bundle of $\partial\Sigma\subset  {{\H}^{+}}$, where $\ov{N}$ is the unit outward normal of $\H\subset {\H}^{+}$. See Figure \ref{fig1}. From Definition \ref{def1.1},  
\begin{eqnarray}\label{capi-boundary}
  \bar{g}( \nu, \ov{N})=\cos (\pi-\theta).  
\end{eqnarray}
It  follows  
\begin{eqnarray}\label{co-normal bundle}
	\begin{array} {rcl}
		\ov{N} &=&\sin\theta \mu-\cos\theta \nu,
		\\
		\ov{\nu} &=&\cos\theta \mu+\sin\theta \nu.
	\end{array}
\end{eqnarray}

\begin{center}	\begin{figure}[H] \includegraphics[width=0.45\linewidth]{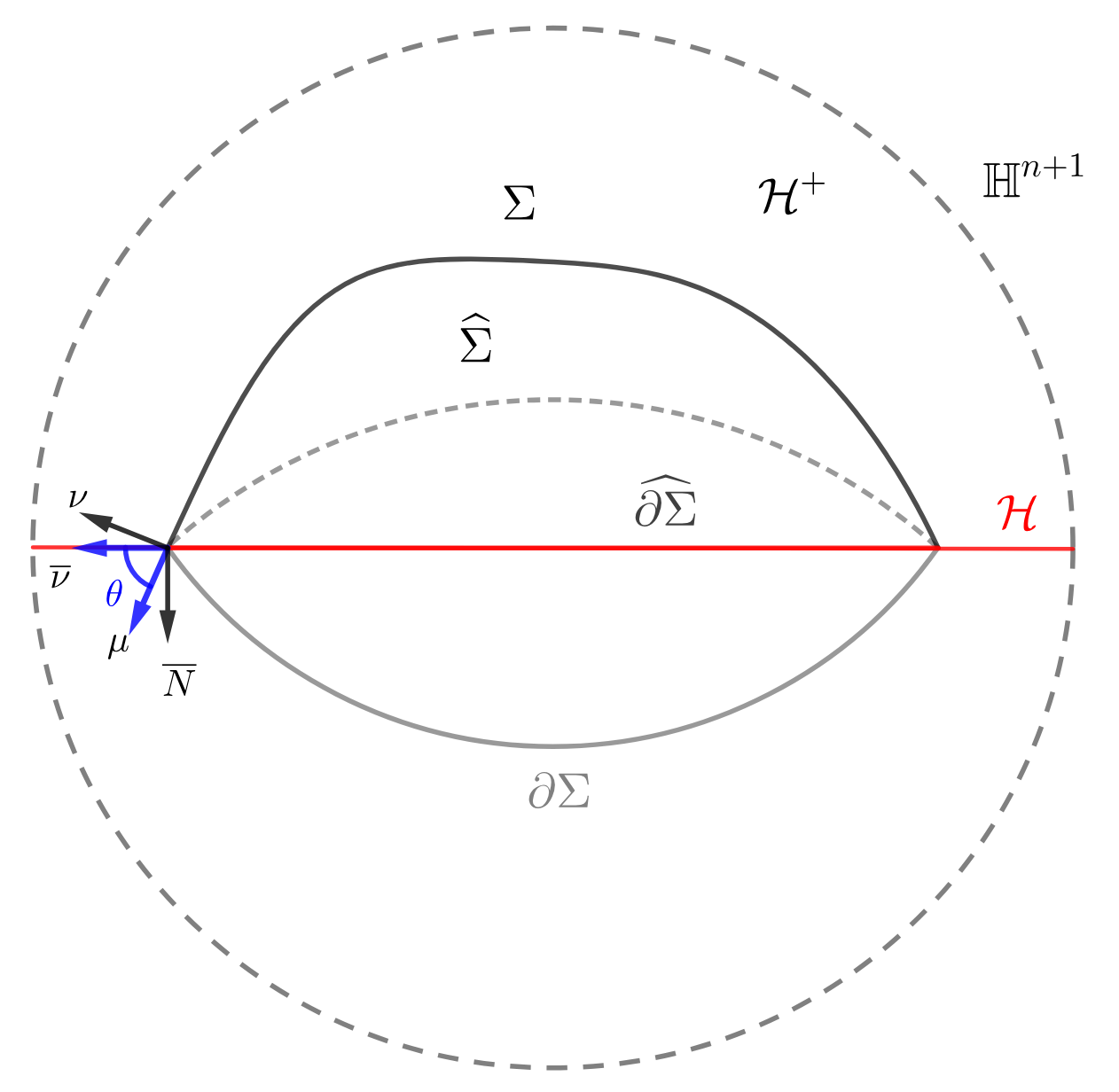}  \caption{Figure 1. A capillary hypersurface $\S\subset\HH^{n+1}$ supported on $\H$.}   \label{fig1}	\end{figure} \end{center}
The second fundamental form of $\p \S$ in  $\mathcal H$ is given by $$\widehat{h}(X, Y):= \bar{g}(\nabla^{\HH^n}_X \ov{\nu}, Y)= \bar{g}(D_X \ov{\nu}, Y), \quad X, Y\in T(\p\S).$$
	The second fundamental form of $\p \S$ in $\S$ is given by
	$$\widetilde{h}(X, Y):= \bar{g}(\nabla_X \mu, Y)= \bar{g}(D_X \mu, Y), \quad X, Y\in T(\p\S).$$
It turns out that  $\widehat{h}, \widetilde{h}$  and $h$ have a nice relationship. The similar properties were previously established by Wang-Weng-Xia  in \cite[Proposition 2.2]{WWX1} for the capillary hypersurface in $\ol{\RR^{n+1}_{+}}$.

\begin{prop}\label{basic-capillary}
Let $\S\subset  {\H^+}$ be a smooth capillary hypersurface and $\{e_\a\}_{\a=2}^{n }$ be an orthonormal frame of $\p \S$. Then along $\p\S$,	\begin{itemize}		\item[(1)] $\mu$ is a principal direction of $\S$. That is, $h_{\mu \a}:=h(\mu, e_\a)=0$.		\item[(2)] $h_{\a\b}=\sin\theta \widehat{h}_{\a\b} .$		\item[(3)] $\widetilde{h}_{\a\b}=\cos\theta \widehat{h}_{\a\b} =  \cot\theta h_{\a\b}.$		\item[(4)] $\n_\mu h_{\a\b}=\widetilde{h}_{\b\g}(h_{\mu\mu}\delta_{\a\g}-h_{\a\g})$.	\end{itemize}\end{prop}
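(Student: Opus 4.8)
The plan is to work at a point $p\in\p\S$, choosing a convenient local frame adapted to the capillary boundary, and to convert each of the four claims into identities among second fundamental forms by differentiating the defining relations \eqref{capi-boundary}--\eqref{co-normal bundle}. Concretely, at $p$ I would take an orthonormal frame $\{e_1=\mu, e_2,\dots,e_n\}$ of $\S$ with $\{e_\a\}_{\a=2}^n$ tangent to $\p\S$, so that $\{\mu,\nu\}$ spans the normal bundle of $\p\S$ in $\H^+$ and $\ov N,\ov\nu$ are expressed through \eqref{co-normal bundle}. Since the contact angle $\theta$ is constant along $\p\S$, differentiating \eqref{capi-boundary} tangentially, i.e. applying $e_\a$ to $\bar g(\nu,\ov N)=-\cos\theta$, gives $0=\bar g(D_{e_\a}\nu,\ov N)+\bar g(\nu,D_{e_\a}\ov N)$; using the Weingarten equation $D_{e_\a}\nu=\W(e_\a)$ and decomposing $\ov N=\sin\theta\,\mu-\cos\theta\,\nu$, the term $\bar g(\nu,D_{e_\a}\ov N)$ has no $\nu$-component contribution (as $\bar g(\nu,\nu)=1$ is constant), so one is left with $\sin\theta\, h(e_\a,\mu)=\bar g(\W(e_\a),\sin\theta\,\mu)=0$, which since $\sin\theta\neq 0$ yields $h_{\mu\a}=0$. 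This is claim (1), and it also shows $\mu$ is a principal direction. This step is essentially the same as \cite[Proposition 2.2]{WWX1} and should go through verbatim in the hyperbolic setting because the argument only uses the Weingarten equation and metric compatibility of $D$.

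Next, for (2) I would compute $\widehat h_{\a\b}=\bar g(D_{e_\a}\ov\nu,e_\b)$ using $\ov\nu=\cos\theta\,\mu+\sin\theta\,\nu$: expanding, $\widehat h_{\a\b}=\cos\theta\,\bar g(D_{e_\a}\mu,e_\b)+\sin\theta\,\bar g(D_{e_\a}\nu,e_\b)=\cos\theta\,\widetilde h_{\a\b}+\sin\theta\, h_{\a\b}$, so it remains to pin down $\widetilde h_{\a\b}$. For this, use that $\ov N$ is normal to $\H$ and $\H$ is totally geodesic, hence $\bar g(D_{e_\a}\ov N,e_\b)=0$ for $e_\a,e_\b\in T(\p\S)\subset T\H$ (the second fundamental form of $\H$ vanishes); expanding $\ov N=\sin\theta\,\mu-\cos\theta\,\nu$ gives $0=\sin\theta\,\widetilde h_{\a\b}-\cos\theta\, h_{\a\b}$, i.e. $\widetilde h_{\a\b}=\cot\theta\, h_{\a\b}$. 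Combining this with the expression for $\widehat h_{\a\b}$ yields $\widehat h_{\a\b}=(\cos\theta\cot\theta+\sin\theta)h_{\a\b}=\tfrac{1}{\sin\theta}h_{\a\b}$, i.e. $h_{\a\b}=\sin\theta\,\widehat h_{\a\b}$ and $\widetilde h_{\a\b}=\cos\theta\,\widehat h_{\a\b}$, which is (2) and (3) together. I expect these to be straightforward once the totally-geodesic property of $\H$ is invoked in the right place.

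For (4), the Codazzi equation is the key tool: in a space form of constant curvature the ambient curvature tensor is parallel and proportional to the metric, so $\n_\mu h_{\a\b}-\n_\a h_{\mu\b}$ equals an ambient-curvature term that is symmetric and in fact vanishes on mixed tangent-normal slots of the form present here, giving $\n_\mu h_{\a\b}=\n_\a h_{\mu\b}$ (up to the zero-curvature correction specific to $\HH^{n+1}$, which one must check contributes nothing to $h(\mu,e_\b)$-type derivatives). Then I would differentiate the boundary relation $h_{\mu\b}=0$ along $e_\a\in T(\p\S)$: $0=e_\a(h_{\mu\b})=\n_\a h_{\mu\b}+h(\n_\a\mu,e_\b)+h(\mu,\n_\a e_\b)$ in the appropriate covariant sense; the tangential part of $\n_\a\mu$ along $\p\S$ is governed by $\widetilde h$, so $\n_\a\mu=\sum_\g\widetilde h_{\a\g}e_\g$ modulo normal terms, and $\n_\a e_\b$ contributes a term hitting $h_{\mu\g}=0$. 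Carefully tracking which terms survive gives $\n_\mu h_{\a\b}=\widetilde h_{\b\g}(h_{\mu\mu}\delta_{\a\g}-h_{\a\g})$ after also using the Codazzi symmetry and relation (1).

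The main obstacle I anticipate is (4): one has to be scrupulous about the difference between the intrinsic connection $\n$ on $\S$ and the boundary connection on $\p\S$, account for the normal components of $\n_\a\mu$ and $\n_\a e_\b$ (which involve $h_{\mu\a}=0$ and $h_{\mu\mu}$), and verify that the constant-curvature correction term in the Codazzi equation for $\HH^{n+1}$ genuinely drops out of the relevant components — unlike (1)--(3), which are purely first-order boundary computations, (4) mixes the Codazzi identity with differentiation of the boundary condition, and it is where sign and index bookkeeping is easiest to get wrong. I would handle it by working in an orthonormal frame that is "boundary-geodesic" at $p$ (so that $\n_\a e_\b$ has no tangential-to-$\p\S$ part at $p$), reducing the identity to a short computation that isolates exactly the $h_{\mu\mu}$ and $h_{\a\g}$ contributions.
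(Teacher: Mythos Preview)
Your approach to (2), (3) and (4) matches the paper's (which for (1) and (4) simply cites \cite{AiSo} and \cite{WWX1}, and for (2)--(3) uses the same totally-geodesic trick, just writing $\ov\nu=\cot\theta\,\ov N+\csc\theta\,\nu$ and $\mu=\csc\theta\,\ov N+\cot\theta\,\nu$ directly rather than going through your two-step algebra). Your Codazzi sketch for (4) is correct: the ambient curvature term $\bar R(e_\mu,e_\a,e_\b,\nu)$ vanishes in any space form, so $\n_\mu h_{\a\b}=\n_\a h_{\mu\b}$, and expanding $(\n_{e_\a}h)(\mu,e_\b)$ with $\n_{e_\a}\mu=\sum_\g\widetilde h_{\a\g}e_\g$ and the $\mu$-component of $\n_{e_\a}e_\b$ equal to $-\widetilde h_{\a\b}$ gives exactly the stated formula.

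There is, however, a genuine gap in your argument for (1). You claim that in
\[
0=\bar g(D_{e_\a}\nu,\ov N)+\bar g(\nu,D_{e_\a}\ov N)
\]
the second term vanishes ``as $\bar g(\nu,\nu)=1$ is constant''. That reasoning is not valid: if you expand $\ov N=\sin\theta\,\mu-\cos\theta\,\nu$ and differentiate, the $\mu$-part contributes $\sin\theta\,\bar g(\nu,D_{e_\a}\mu)=-\sin\theta\,h_{\mu\a}$, which exactly cancels the first term and leaves the tautology $0=0$. The correct justification is that $\mathcal H$ is totally geodesic and $e_\a\in T\mathcal H$, hence $D_{e_\a}\ov N=0$ (the shape operator of $\mathcal H$ vanishes), so the second term is identically zero and one is left with $\sin\theta\,h_{\mu\a}=0$. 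You already invoke the totally-geodesic property for (2)--(3); you need it here first.
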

\begin{proof}
    The first and fourth assertions are well-known, see e.g. \cite[Lemma 2.2]{AiSo} and \cite[Proposition~2.4 (4)]{WWX1} resp. While (2) and (3) follow
    from  \eqref{co-normal bundle} and the simple fact that $\H$ is totally geodesic hyperplane in $\HH^{n+1}$. In fact, 
     \begin{eqnarray*}
        \widehat{h}_{\alpha\beta}=\bar{g}(D_{e_{\alpha}}\ov{\nu}, e_{\beta})=\bar{g}(D_{e_{\alpha}}(\cot\theta\ov{N}+\csc\theta\nu), e_{\beta})=\csc\theta h_{\alpha\beta},
    \end{eqnarray*}
    and 
    \begin{eqnarray*}
        \widetilde{h}_{\alpha\beta}=\bar{g}(D_{e_{\alpha}}\mu, e_{\beta} )=\bar{g}(D_{e_{\alpha}}(\csc\theta \ov{N}+\cot\theta \nu), e_{\beta})=\cot\theta h_{\alpha\beta}.
    \end{eqnarray*}
\end{proof}

\begin{cor}\label{coro convex}
    Let $\Sigma\subset {\H^+}$ be a  smooth capillary hypersurface  %supported on  geodesic plane $\H$ 
    with a constant angle  $\theta\in (0,\frac{\pi}{2})$. If $\Sigma$ is convex (resp. strictly convex), in the sense that $h$ is non-negative definite (resp. positive definite), then $\partial\Sigma$ is convex (resp. strictly convex) in both  $\Sigma$ and $\mathcal H$. 
\end{cor}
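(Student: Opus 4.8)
The plan is to deduce the convexity of $\partial\Sigma$ in both $\Sigma$ and $\mathcal H$ directly from the relations in Proposition \ref{basic-capillary}, exploiting the fact that when $\mu$ is a principal direction the second fundamental form of $\Sigma$ along $\partial\Sigma$ block-decomposes. First I would fix a point $p\in\partial\Sigma$ and choose $\{e_\alpha\}_{\alpha=2}^n$ an orthonormal frame of $T_p(\partial\Sigma)$ diagonalizing the tangential part $h_{\alpha\beta}$ of the Weingarten map of $\Sigma$; by Proposition \ref{basic-capillary}(1) we have $h_{\mu\alpha}=0$, so in the frame $\{\mu,e_2,\dots,e_n\}$ the matrix $(h_{ij})$ of $\Sigma$ is block diagonal with blocks $h_{\mu\mu}$ and $(h_{\alpha\beta})$. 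Consequently, if $\Sigma$ is convex (resp. strictly convex) at $p$, i.e. $(h_{ij})\ge 0$ (resp. $>0$), then in particular the restricted matrix $(h_{\alpha\beta})_{\alpha,\beta=2}^n$ is non-negative definite (resp. positive definite).

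Next I would translate this into statements about $\partial\Sigma$. The second fundamental form of $\partial\Sigma$ inside $\Sigma$ is $\widetilde h_{\alpha\beta}$, and by Proposition \ref{basic-capillary}(3) one has $\widetilde h_{\alpha\beta}=\cot\theta\, h_{\alpha\beta}$; since $\theta\in(0,\tfrac\pi2)$ the factor $\cot\theta$ is strictly positive, so $(\widetilde h_{\alpha\beta})\ge 0$ (resp. $>0$) whenever $(h_{\alpha\beta})\ge 0$ (resp. $>0$). This gives convexity (resp. strict convexity) of $\partial\Sigma$ in $\Sigma$. Likewise, the second fundamental form of $\partial\Sigma$ in $\mathcal H$ is $\widehat h_{\alpha\beta}$, and by Proposition \ref{basic-capillary}(2) we have $h_{\alpha\beta}=\sin\theta\,\widehat h_{\alpha\beta}$, i.e. $\widehat h_{\alpha\beta}=\csc\theta\, h_{\alpha\beta}$ with $\csc\theta>0$; hence $(\widehat h_{\alpha\beta})\ge 0$ (resp. $>0$) as well, giving convexity (resp. strict convexity) of $\partial\Sigma$ in $\mathcal H$. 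Since $p\in\partial\Sigma$ was arbitrary, the conclusion holds along all of $\partial\Sigma$.

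There is essentially no obstacle here: the statement is a pointwise linear-algebra consequence of the already-established identities, the only thing to be careful about is the sign of $\cot\theta$ and $\csc\theta$, which are both positive precisely because $\theta\in(0,\tfrac\pi2)$ — this is where the angle restriction in the hypothesis is used. One small point worth spelling out is that "convexity of $\partial\Sigma$ in $\Sigma$ (resp. in $\mathcal H$)" means exactly non-negativity (resp. positivity) of the shape operator $\widetilde h$ (resp. $\widehat h$) with respect to a consistently chosen normal, which here is $\mu$ (resp. $\overline\nu$) as fixed in the orientation convention \eqref{co-normal bundle}; with those choices the identities of Proposition \ref{basic-capillary} are stated with the correct signs, so no further sign bookkeeping is needed.
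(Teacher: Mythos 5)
Your proof is correct and is exactly the argument the paper intends: the corollary is stated in the paper without proof as an immediate consequence of Proposition~\ref{basic-capillary}, and your pointwise block-decomposition via part (1), combined with the positive factors $\cot\theta$ and $\csc\theta$ from parts (2) and (3) for $\theta\in(0,\tfrac{\pi}{2})$, is the expected reasoning. (Diagonalizing $(h_{\alpha\beta})$ is not actually needed — positive (semi-)definiteness of a principal submatrix already follows from that of the full matrix — but this does not affect correctness.)
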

Chen-Pyo had established the following Minkowski type formula \cite[Proposition~3]{chen},  which can be viewed as the capillary analogous result of \eqref{mink formula-closed} in $\HH^{n+1}$. This formula will be important in constructing the locally constrained curvature flow later.  
 \begin{prop}[\cite{chen}] \label{Minkowski}
       Let $x: M\rightarrow  {{\H}^{+}}$ be a smooth immersion of $\Sigma:=x(M)$ into $ {{\H}^{+}}$, and its boundary intersects with $\H$ at a constant angle $\theta\in (0,\pi)$. Then for any $1\leq k\leq n$, there holds  
     \begin{eqnarray}\label{Minkowski formula}
         \int_{\Sigma}\left[H_{k-1}(V_{0}-\cos\theta\bar{g}(Y_{n+1},\nu))-H_{k}\bar{g}(x, \nu)\right]dA=0,
     \end{eqnarray}
     where  $V_{0}=\frac{1+|x|^{2}}{1-|x|^{2}}$, $Y_{n+1}$ is a Killing vector field in $\HH^{n+1}$, 
     given by
     \begin{eqnarray}\label{Y}
         Y_{n+1}:=\frac{1}{2}(1+|x|^{2})E_{n+1}-\delta(x, E_{n+1})x,
     \end{eqnarray}
     and $dA$ is the area element of $\S$ w.r.t. the induced metric.
 \end{prop}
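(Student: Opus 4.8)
The plan is to play two ambient vector fields on $\HH^{n+1}$ off against each other: the Euclidean position field $x$, which is conformal Killing, and the Killing field $Y_{n+1}$ of \eqref{Y}, which encodes the geometry relative to $\H$. The divergence of a suitable Newton-tensor contraction of the tangential part of $x$ produces the two bulk terms of \eqref{Minkowski formula} plus a boundary term along $\p\S$; that boundary term is then reconciled with the $-\cos\theta\,\bar g(Y_{n+1},\nu)$ term by invoking the classical closed Minkowski formula \eqref{mink formula-closed} one dimension down, on $\p\S\subset\H\cong\HH^n$, together with the Killing property of $Y_{n+1}$.

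First I would set up the main identity. In the Poincar\'e ball model a direct computation with the conformal change of connection formula (or the identification $x=\sinh\rho\,\p_\rho$) gives $D_Yx=V_0\,Y$ for all $Y$, where $V_0=\frac{1+|x|^2}{1-|x|^2}$. Restricting to $\S$, writing $x=x^T+\bar g(x,\nu)\nu$ and using the Weingarten equation yields $\n_ix^T_j=V_0\,g_{ij}-\bar g(x,\nu)h_{ij}$. Since $\HH^{n+1}$ is a space form, the Codazzi equations make the Newton tensors $T_m$ of $\S$ divergence free. Contracting the vector field $W^j=(T_{k-1})^{jl}x^T_l$, using $\tr T_{k-1}=(n-k+1)\sigma_{k-1}$ and $\tr(T_{k-1}h)=k\sigma_k$, and applying the divergence theorem on $\S$, one gets
\[
\int_\S\big[(n-k+1)V_0\,\sigma_{k-1}-k\,\bar g(x,\nu)\,\sigma_k\big]\,dA=\int_{\p\S}(T_{k-1})(x^T,\mu)\,ds .
\]
Next I would analyze the boundary integral. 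By Proposition \ref{basic-capillary}(1) the co-normal $\mu$ is a principal direction, so $T_{k-1}$ is block diagonal along $\p\S$ and only the $\mu$-component of $x^T$ survives: $(T_{k-1})(x^T,\mu)=\bar g(x,\mu)(T_{k-1})_{\mu\mu}$. Since $\H$ is a totally geodesic hyperplane through the origin $O\in\mathrm{int}(\widehat{\p\S})$, the field $x$ is tangent to $\H$ along $\H$; combined with $\ov\nu=\cos\theta\,\mu+\sin\theta\,\nu$ from \eqref{co-normal bundle} this forces $\bar g(x,\mu)=\cot\theta\,\bar g(x,\nu)$ and $\bar g(x,\nu)=\sin\theta\,v^\H$, with $v^\H:=\bar g(x,\ov\nu)$ the support function of $\p\S$ in $\H$. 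Moreover $(T_{k-1})_{\mu\mu}$ is $\sigma_{k-1}$ of the principal curvatures of $\S$ in the directions tangent to $\p\S$, which by Proposition \ref{basic-capillary}(2) equal $\sin\theta$ times those of $\p\S\subset\H$, so $(T_{k-1})_{\mu\mu}=(\sin\theta)^{k-1}\sigma_{k-1}^{\p\S}$. Hence the boundary term equals $\cos\theta\,(\sin\theta)^{k-1}\int_{\p\S}v^\H\,\sigma_{k-1}^{\p\S}\,ds$.

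Finally I would convert this into a bulk integral of the right shape. Applying \eqref{mink formula-closed} to the closed hypersurface $\p\S\subset\H\cong\HH^n$ (for which $x|_\H$ is the analogous conformal Killing field and $V_0=\cosh\rho$ plays the role of $\phi'$) gives $(n-k+1)\int_{\p\S}V_0\,\sigma_{k-2}^{\p\S}\,ds=(k-1)\int_{\p\S}v^\H\,\sigma_{k-1}^{\p\S}\,ds$. Separately, for the Killing field $Y_{n+1}$, writing $Y_{n+1}=Y^T+\bar g(Y_{n+1},\nu)\nu$ one has $\n_{(i}Y^T_{j)}=-\bar g(Y_{n+1},\nu)h_{ij}$, so $\div_\S\!\big((T_{k-2})^{jl}Y^T_l\big)=-(k-1)\bar g(Y_{n+1},\nu)\sigma_{k-1}$; since $Y_{n+1}=-V_0\ov N$ on $\H$ (with $\ov N$ the outward normal of $\p\H^+=\H$), along $\p\S$ one finds $Y^T=-V_0\sin\theta\,\mu$ and $(T_{k-2})_{\mu\mu}=(\sin\theta)^{k-2}\sigma_{k-2}^{\p\S}$, so integration gives $(k-1)\int_\S\bar g(Y_{n+1},\nu)\sigma_{k-1}\,dA=(\sin\theta)^{k-1}\int_{\p\S}V_0\,\sigma_{k-2}^{\p\S}\,ds$. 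Chaining these three relations rewrites the boundary term as $(n-k+1)\cos\theta\int_\S\bar g(Y_{n+1},\nu)\sigma_{k-1}\,dA$; feeding this back into the displayed equation and simplifying with $\sigma_{k-1}=\binom{n}{k-1}H_{k-1}$, $\sigma_k=\binom{n}{k}H_k$ and the identity $(n-k+1)\binom{n}{k-1}=k\binom{n}{k}$ yields exactly \eqref{Minkowski formula}. The degenerate case $k=1$ (where $T_{-1}=0$) is handled directly, the chained step being replaced by $\int_{\p\S}v^\H\,ds=n\int_{\widehat{\p\S}}V_0\,dV$ (from $\div^{\HH^n}x=nV_0$) and $\int_\S\bar g(Y_{n+1},\nu)\,dA=\int_{\widehat{\p\S}}V_0\,dV$ (from $\div Y_{n+1}=0$ on $\widehat\S$). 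The main obstacle is precisely this last matching step: the boundary term produced by the conformal Killing field is not of the form appearing in \eqref{Minkowski formula}, and reconciling it requires simultaneously using the capillary relations of Proposition \ref{basic-capillary} to transfer between the second fundamental forms of $\S$ and of $\p\S\subset\H$, the closed Minkowski formula in $\HH^n$, and the precise value $Y_{n+1}=-V_0\ov N$ of the Killing field on $\H$; keeping track of the signs, the powers of $\sin\theta$, and the binomial normalizations so that the three contributions collapse to the stated identity is the only delicate point, everything else being a routine application of the divergence theorem and the Newton-tensor algebra of Section \ref{sec2.1}.
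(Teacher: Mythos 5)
Your proposal is a genuine, complete proof of a result the paper itself does \emph{not} prove: Proposition~\ref{Minkowski} is simply cited from Chen--Pyo~\cite{chen}. So there is no paper proof to compare against, but I checked your argument line by line and it is correct.

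Briefly: the main integration by parts $\int_\S \operatorname{div}\big((T_{k-1})^{jl}x^T_l\big)\,dA$ yields $(n-k+1)V_0\sigma_{k-1}-k\bar g(x,\nu)\sigma_k$ plus the boundary term, using $D_Yx=V_0\,Y$ and the Codazzi divergence-freeness of Newton tensors in a space form. The boundary computation $(T_{k-1})(x^T,\mu)=\cot\theta\,\bar g(x,\nu)\,(\sin\theta)^{k-1}\sigma_{k-1}^{\p\S}$ follows from Proposition~\ref{basic-capillary}(1)--(2), the fact that $x$ is tangent to the totally geodesic $\H$ through $O$, and the decomposition $\ov\nu=\cos\theta\,\mu+\sin\theta\,\nu$ in~\eqref{co-normal bundle}. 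Your bridge is the key idea: you run the closed $\HH^n$-Minkowski formula~\eqref{mink formula-closed} on $\p\S\subset\H$ to trade $\int_{\p\S}v^\H\sigma_{k-1}^{\p\S}$ for $\int_{\p\S}V_0\,\sigma_{k-2}^{\p\S}$, and a second integration by parts with $(T_{k-2})^{jl}Y^T_l$, using the Killing relation $\nabla_{(i}Y^T_{j)}=-\bar g(Y_{n+1},\nu)h_{ij}$ and the identity $Y_{n+1}|_\H=-V_0\ov N$ (hence $Y^T|_{\p\S}=-V_0\sin\theta\,\mu$), to trade that for $\int_\S\bar g(Y_{n+1},\nu)\sigma_{k-1}\,dA$. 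The binomial bookkeeping $(n-k+1)\binom{n}{k-1}=k\binom{n}{k}$ and the separate $k=1$ check (via $\operatorname{div}^{\HH^n}x=nV_0$ on $\wh{\p\S}$ and $\operatorname{div}Y_{n+1}=0$ on $\wh\S$) are both right. This is the standard Newton-tensor strategy for capillary Minkowski formulas (cf.\ the half-space Euclidean case in~\cite{WWX1} and the free-boundary ball case in~\cite{WX2019}); the structurally distinctive point here, which you handled correctly, is that the two boundary integrands do not match pointwise and require the lower-dimensional Minkowski identity on $\p\S$ to cancel in the integral.
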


 Next, we demonstrate that the spherical cap $\C_{\theta, r_{0}}$ is an umbilical capillary hypersurface in $\H^+$ such that the integrand in \eqref{Minkowski} is identically zero.  In particular, this implies that the geodesic spherical caps are the static solutions to our flow  \eqref{flow with capillary} in the next Section.

 \begin{prop}\label{sup of model}
 For any $r_0>0$, the geodesic spherical cap  $\C_{\theta, r_{0}}$ defined in \eqref{static cap model} satisfies 
    \begin{eqnarray}\label{static solution}
        V_{0}-\cos\theta\bar{g}(Y_{n+1},\nu)-\frac{1+r^{2}_{0}\sin^{2}\theta }{2r_{0}}\bar{g}(x,\nu)=0,
    \end{eqnarray}
   and its principal curvatures equal  
$\frac{H_k}{H_{k-1}}=\frac{1+r^{2}_{0}\sin^{2}\theta }{2r_{0}}$ for all $1\leq k\leq n$.
\end{prop}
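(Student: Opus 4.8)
The plan is to work in the Poincaré ball model and verify \eqref{static solution} by a direct computation on the Euclidean sphere of radius $r_0$ centered at $-r_0\cos\theta E_{n+1}$, then deduce the principal curvature statement from umbilicity. First I would parametrize $\C_{\theta,r_0}$ as $x = -r_0\cos\theta E_{n+1} + r_0\omega$ for $\omega$ in a suitable region of the unit sphere $\SS^n\subset\RR^{n+1}$, so that the Euclidean outward unit normal is $N^\delta = \omega = \frac{x + r_0\cos\theta E_{n+1}}{r_0}$. Since $\bar g = e^{2u}\delta$ is conformal to the Euclidean metric, the $\bar g$-unit normal is $\nu = e^{-u}N^\delta$; I would fix the sign so that $\nu$ is outward with respect to $\wh\S$. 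I would then compute the two scalar quantities appearing in \eqref{static solution}: $\bar g(x,\nu) = e^{2u}\,\delta(x,\nu) = e^{u}\,\delta(x, N^\delta)$, and $\bar g(Y_{n+1},\nu) = e^{u}\,\delta(Y_{n+1}, N^\delta)$ with $Y_{n+1} = \tfrac12(1+|x|^2)E_{n+1} - \delta(x,E_{n+1})x$, using $|x|^2 = r_0^2 + r_0^2\cos^2\theta - 2r_0^2\cos\theta\,\delta(\omega, E_{n+1}) \cdot(\text{sign})$ — more precisely expanding $|x|^2 = |{-r_0\cos\theta E_{n+1} + r_0\omega}|^2$. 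The conformal factor contributes $V_0 = \frac{1+|x|^2}{1-|x|^2} = \tfrac12 e^{u}(1-|x|^2) \cdot \tfrac{1+|x|^2}{\ ?}$; concretely I would use the identity $e^{u} = \frac{2}{1-|x|^2}$, so $V_0 = \tfrac12 e^{u}(1+|x|^2)$.

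The core of the computation is then purely algebraic: substituting the parametrization, each of $V_0$, $\bar g(Y_{n+1},\nu)$, and $\bar g(x,\nu)$ becomes an explicit function of the single variable $t := \delta(\omega, E_{n+1})$ (and the constants $r_0$, $\theta$), and I would check that
\[
V_0 - \cos\theta\,\bar g(Y_{n+1},\nu) - \frac{1+r_0^2\sin^2\theta}{2r_0}\,\bar g(x,\nu) \equiv 0
\]
identically in $t$. I expect the $e^u$ factors to cancel uniformly, reducing the claim to a polynomial identity in $t$ of low degree whose coefficients vanish precisely because of the relation between the center offset $r_0\cos\theta$ and the radius $r_0$; the constant $\frac{1+r_0^2\sin^2\theta}{2r_0}$ is exactly what is forced by matching the top-degree and constant terms. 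This is the step requiring the most care but it is a finite, mechanical verification.

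For the principal curvatures: a Euclidean sphere is totally umbilic in the Euclidean metric, and under a conformal change $\bar g = e^{2u}\delta$ umbilicity is preserved (the trace-free part of the second fundamental form scales conformally), so $\C_{\theta,r_0}$ is totally umbilic in $(\BB^{n+1},\bar g)$, i.e. all principal curvatures $\kappa_i$ are equal to some common value $\kappa$. Hence $H_k = \kappa^k$ and $\frac{H_k}{H_{k-1}} = \kappa$ for every $1\le k\le n$. To identify $\kappa = \frac{1+r_0^2\sin^2\theta}{2r_0}$ I would combine \eqref{static solution} with the Minkowski formula \eqref{Minkowski formula} for $k=1$: since $H_0 = 1$ and $H_1 = \kappa$ is constant, \eqref{Minkowski formula} gives $\int_\S\big[(V_0 - \cos\theta\,\bar g(Y_{n+1},\nu)) - \kappa\,\bar g(x,\nu)\big]dA = 0$, while \eqref{static solution} asserts the integrand equals $\big(\tfrac{1+r_0^2\sin^2\theta}{2r_0} - \kappa\big)\bar g(x,\nu)$ pointwise; since $\bar g(x,\nu)$ does not vanish identically (and has a sign, as $\C_{\theta,r_0}$ is star-shaped about the origin after the normalization $O\in\mathrm{int}(\wh{\p\S})$), we conclude $\kappa = \frac{1+r_0^2\sin^2\theta}{2r_0}$. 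Alternatively, and perhaps more cleanly, I would read off $\kappa$ directly from the conformal transformation law for the second fundamental form of a Euclidean sphere, which gives $\kappa = e^{-u}\big(\tfrac1{r_0} + \partial_{N^\delta} u\big)$, and check this equals the stated value using $u = \log\frac{2}{1-|x|^2}$ and $\partial_{N^\delta}u = \frac{2\,\delta(x,N^\delta)}{1-|x|^2}$; the main obstacle, again, is keeping the algebra with the offset center organized, but no conceptual difficulty arises.
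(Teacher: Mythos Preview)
Your approach is correct and essentially the same as the paper's: both verify \eqref{static solution} by direct computation of $\bar g(Y_{n+1},\nu)$ and $\bar g(x,\nu)$ in the Poincar\'e model using the explicit Euclidean normal $N^\delta=(x+r_0\cos\theta\,E_{n+1})/r_0$, reducing the claim to an elementary algebraic identity in $|x|^2$. The only difference is that for the principal curvature value the paper simply invokes \cite[formula~(13)]{WX20} to read off $\kappa=\frac{1+r_0^2\sin^2\theta}{2r_0}$, whereas you propose self-contained arguments (via the Minkowski formula or the conformal transformation law for the second fundamental form); both routes are valid and your second alternative is precisely what underlies the cited formula.
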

\begin{proof}
    For any $x\in \C_{\theta, r_{0}}$, we know  %direct calculations yield
    \begin{eqnarray}
    \label{Yn+1 in spherical caps}
 \cos\theta    \bar{g}(Y_{n+1}, \nu)&=&\frac{4  \cos\theta }{(1-|x|^{2})^{2}}\left(\frac{1}{2}(1+|x|^{2})\delta(E_{n+1},\nu)-\delta(x, E_{n+1})\delta(x, \nu)\right)\notag \\
    &=&\frac{r^{2}_{0}\cos^{2}\theta+r^{2}_{0}-|x|^{2}+r^{2}_{0}|x|^{2}\cos^{2}\theta+r^{2}_{0}|x|^{2}-r^{4}_{0}\sin^{4}\theta}{2(1-|x|^{2})r^{2}_{0}},
     \end{eqnarray}
     and 
     \begin{eqnarray}\label{support function of spherical caps}
         \bar{g}(x,\nu)=\frac{4}{(1-|x|^{2})^{2}}\delta(x, \nu)=\frac{|x|^{2}+r^{2}_{0}\sin^{2}\theta}{r_{0}(1-|x|^{2})}.
         \end{eqnarray}
         Combining \eqref{Yn+1 in spherical caps}, \eqref{support function of spherical caps} and $V_{0}=\frac{1+|x|^{2}}{1-|x|^{2}}$,  it follows 
         \eqref{static solution}. Using \cite[formula~(13)]{WX20}, 
        the spherical cap $\C_{\theta, r_{0}}$ is umbilical in ${\H}^{+}$ and the principal curvatures are equal to $\frac{1+r^{2}_{0}\sin^{2}\theta}{2r_{0}}$, then the assertion is proved. 
\end{proof}

 \subsection{Quermassintegrals and the first variational formula}  
 \label{sec2.4}
We introduce the quermassintegrals $\A_{k,\theta}(\wh\S)$ as \eqref{A-k,theta} for compact hypersurface with capillary boundary supported on the geodesic hyperplane in $\HH^{n+1}$. The definition of the quermassintegrals $\mathcal{A}_{k, \theta}(\widehat{\S})$ in such a way is motivated by the first variational formula. For the reader's convenience, we restate Theorem \ref{thm1} in the following.
\begin{thm}\label{variation thm}
    Let $\Sigma_{t}\subset {\H^+}$ be a family of smooth hypersurfaces with capillary boundary supported on $\H$,  given by the embedding $x(\cdot, t): M\rightarrow  {\H^+}$ and satisfies
    \begin{eqnarray}\label{general flow}
       (\partial_{t} x )^{\perp}=f\nu,
    \end{eqnarray}
    for some smooth function $f$. Then for $-1\leq k\leq n-1 $, 
    \begin{eqnarray}\label{variation formula}
        \frac{d}{dt}\left(\mathcal{A}_{k+1,\theta}(\widehat{\Sigma_{t}})\right)=\frac{n-k}{n+1}\int_{\Sigma_{t}}f H_{k+1}dA_{t}.
    \end{eqnarray} 
\end{thm}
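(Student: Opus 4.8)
The plan is to prove the variational formula by induction on $k$, using the known variational formulas for the closed-type quermassintegrals $\W_k(\wh\S)$ and $\W^\H_k(\wh{\p\S})$ together with careful bookkeeping of the boundary terms that arise because $\S$ has a boundary moving inside $\H$. First I would record the base cases $k=-1$ and $k=0$. For $k=-1$ the claim is just $\frac{d}{dt}|\wh{\S_t}| = \int_{\S_t} f\, dA_t$, which is the standard first variation of volume under a normal flow with speed $f$ (the contribution from the part of the moving boundary sliding along the fixed totally geodesic hyperplane $\H$ vanishes since that piece of $\p\wh\S$ has velocity tangent to $\H$, hence normal velocity zero against $\ol N$). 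For $k=0$ one differentiates $\A_{1,\theta}(\wh\S)=\W_1(\wh\S)-\frac{\cos\theta}{n+1}\W_0^\H(\wh{\p\S})=\frac1{n+1}|\S|-\frac{\cos\theta}{n+1}|\wh{\p\S}|$; the first variation of the area $|\S_t|$ of a hypersurface with boundary gliding on $\H$ produces the bulk term $\int_{\S_t} fH_1\,dA_t$ plus a boundary term $\int_{\p\S_t} \bar g(\p_t x,\mu)\,ds$, while the first variation of $|\wh{\p\S_t}|$ (an $n$-dimensional region in $\H$) produces $\int_{\p\S_t}\bar g(\p_t x,\ol\nu)\,ds$; using the capillary decomposition \eqref{co-normal bundle}, $\bar g(\p_t x,\mu)-\cos\theta\,\bar g(\p_t x,\ol\nu)=\sin^2\theta\,\bar g(\p_t x,\mu)-\sin\theta\cos\theta\,\bar g(\p_t x,\nu)+\dots$ — more precisely one checks $\mu-\cos\theta\,\ol\nu=\sin\theta\,(\sin\theta\,\mu-\cos\theta\,\nu)=\sin\theta\,\ol N$, so these two boundary contributions combine into a multiple of $\bar g(\p_t x,\ol N)$, which vanishes because the boundary point stays on $\H$ (its velocity is tangent to $\H$). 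Hence the boundary terms cancel and $\frac{d}{dt}\A_{1,\theta}(\wh{\S_t})=\frac{n}{n+1}\int_{\S_t}fH_1\,dA_t$, as required.

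For the inductive step I would assume the formula for all indices up to $k$ and prove it for $k+1$, using the recursive definition \eqref{A-k,theta}. Differentiating
\begin{eqnarray*}
\A_{k+1,\theta}(\wh\S)=\W_{k+1}(\wh\S)+\frac{\cos\theta}{n+1}\sum_{l=0}^{[k/2]}(-1)^{l-1}(\sin\theta)^{k-2l}\,\W^\H_{k-2l}(\wh{\p\S})\prod_{s=0}^{l-1}\frac{k-2s}{n-k+2(s+1)},
\end{eqnarray*}
one must compute (i) $\frac{d}{dt}\W_{k+1}(\wh{\S_t})$ and (ii) $\frac{d}{dt}\W^\H_{j}(\wh{\p\S_t})$ for each relevant $j=k-2l$. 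For (i) I would differentiate the recursion $\W_{k+1}(\wh\S)=\frac{1}{n+1}\int_\S H_k\,dA-\frac{k}{n+2-k}\W_{k-1}(\wh\S)$; the derivative of $\int_{\S_t}H_k\,dA_t$ is computed by the standard evolution equations for $H_k$ and $dA$ under a normal flow with speed $f$, which yields the bulk term $(n-k)\int_\S fH_{k+1}\,dA$ up to divergence terms, and those divergence terms integrate to a boundary integral over $\p\S$ of the form $\int_{\p\S}\big(\text{contraction of }\dot\sigma_{k}\text{-type quantities with }\mu\big)\,ds$; using Proposition \ref{basic-capillary}(1) (namely $h_{\mu\alpha}=0$, so $\mu$ is a principal direction) these boundary integrands simplify substantially. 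Combining with the inductive hypothesis applied to $\W_{k-1}$ gives $\frac{d}{dt}\W_{k+1}(\wh{\S_t})=\frac{n-k}{n+1}\int_{\S_t}fH_{k+1}\,dA_t + (\text{boundary terms})$. For (ii), each $\W^\H_j(\wh{\p\S_t})$ is a quermassintegral of the $n$-dimensional hypersurface $\p\S_t\subset\H\cong\HH^n$, so by the closed-case variational formula \eqref{variation of quer} applied within $\H$, $\frac{d}{dt}\W^\H_j(\wh{\p\S_t})=\frac{n-j}{n}\int_{\p\S_t}\bar g(\p_t x,\ol\nu)\,H^{\p\S}_j\,ds$, and by Proposition \ref{basic-capillary}(2)--(3) the mean curvatures $H^{\p\S}_j$ of $\p\S$ in $\H$ are related to the mean curvatures (restricted to $\p\S$) of $\S$ by powers of $\sin\theta$, while $\bar g(\p_t x,\ol\nu)=\cos\theta\,\bar g(\p_t x,\mu)=\cos\theta\, f\,\bar g(\nu,\mu)+\dots$; in fact since $(\p_t x)^\perp=f\nu$ one uses $\bar g(\p_t x,\ol\nu)$ expressed through the tangential-along-$\H$ constraint.

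The main obstacle — and the heart of the computation — is showing that \emph{all} the boundary terms cancel: the boundary integrals left over from differentiating $\W_{k+1}(\wh\S)$ must be exactly matched by the sum $\frac{\cos\theta}{n+1}\sum_l(\cdots)\frac{d}{dt}\W^\H_{k-2l}(\wh{\p\S})$, and this is precisely what forces the particular combinatorial coefficients $(-1)^{l-1}(\sin\theta)^{k-2l}\prod_{s=0}^{l-1}\frac{k-2s}{n-k+2(s+1)}$ in the definition \eqref{A-k,theta}. Concretely, the boundary term from $\frac{d}{dt}\int_{\S}H_k\,dA$ is, after using $h_{\mu\alpha}=0$ and the Codazzi/divergence structure, proportional to $\int_{\p\S}f\,\sigma_{k-1}(h|_{\p\S}\text{-restricted})\,ds$ type expression, i.e. it feeds into $\W^\H_{k-2}(\wh{\p\S})$ through Proposition \ref{basic-capillary}(2) (the factor $\sin\theta$ per index), which then by induction on the recursion cascades down through $\W^\H_{k-4},\dots$; tracking the sign and the ratio of binomial coefficients $\binom{n-1}{k-1}/\binom{n}{k}$ versus $\binom{n-1}{k-2-\cdots}$, together with the constants $\frac{k}{n+2-k}$ from the $\W$-recursion and $\frac{k}{n+1-k}$ from the $\W^\H$-recursion, produces exactly the telescoping that makes the coefficients work. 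I would organize this by proving a single clean "boundary cancellation lemma": the total boundary contribution to $\frac{d}{dt}\A_{k+1,\theta}(\wh{\S_t})$ vanishes identically, after which only the bulk term $\frac{n-k}{n+1}\int_{\S_t}fH_{k+1}\,dA_t$ survives. The $\theta=\frac\pi2$ consistency check (where all correction terms drop out since $\cos\theta=0$) and the low-$k$ base cases give useful sanity checks on the signs. Once the boundary cancellation is established the proof is complete.
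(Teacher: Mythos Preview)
Your proposal is correct and follows essentially the same route as the paper: establish the base cases $k=-1,0$, compute $\frac{d}{dt}\int_{\S_t}H_k\,dA_t$ via the evolution equations and the divergence theorem, identify the resulting boundary integral via Proposition~\ref{basic-capillary} (in particular $h_{\mu\alpha}=0$ and $h_{\alpha\beta}=\sin\theta\,\widehat h_{\alpha\beta}$), compute $\frac{d}{dt}\W^{\H}_j(\wh{\p\S_t})$ from the closed variational formula \eqref{variation of quer} applied inside $\H$, and then verify that the specific coefficients in \eqref{A-k,theta} make all boundary contributions cancel. The paper organizes the induction slightly differently: instead of inducting directly on the statement for $\A_{k+1,\theta}$, it first proves an auxiliary inductive claim expressing $\frac{d}{dt}\W_{k+1}(\wh{\S_t})$ as the desired bulk term plus an explicit linear combination of $\frac{d}{dt}\W^{\H}_{k-2l}(\wh{\p\S_t})$ (with separate even/odd formulas), and only then reads off the theorem from the definition of $\A_{k+1,\theta}$; this packaging makes the coefficient-matching more transparent. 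Two small corrections to your sketch: the boundary normal speed of $\p\S_t$ in $\H$ is simply $\bar g(\p_t x,\ol\nu)=f/\sin\theta$ (since on $\p M$ one has $\p_t x=f\nu+f\cot\theta\,\mu=\tfrac{f}{\sin\theta}\ol\nu$), and the boundary term produced by $\frac{d}{dt}\int_\S H_k\,dA$ matches directly with $\frac{d}{dt}\W^{\H}_{k}$ (same index), not $\W^{\H}_{k-2}$---the cascade to lower indices comes through the recursion $\W_{k+1}=\frac{1}{n+1}\int_\S H_k-\frac{k}{n+2-k}\W_{k-1}$.
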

Before proving Theorem \ref{variation thm}, we rewrite the flow \eqref{general flow} in the  following general form 
\begin{eqnarray}\label{more general flow}
    \partial_{t}x=f\nu+\T,
\end{eqnarray}
where $\T$ is the tangential vector along $T\S_t$. If further imposing the capillary boundary condition, then it must satisfy
\begin{eqnarray}\label{T bdry}
   \T |_{\p M}=f\cot\theta \mu,%, \quad \hbox{ on } \p\S.
\end{eqnarray} see e.g. \cite[Section 2.4]{WX2022} or  \cite[Section 2.5]{WWX1}. 

Along the general flow \eqref{more general flow}, we have the evolution equations for the induced metric $g_{ij}$, the volume element $dA_t$, the unit normal vector field $\nu$, the second fundamental form $(h_{ij})$, the Weingarten	tensor  $\W:=(g^{ik}h_{kj})$, the mean curvature $H$ and the Weingarten curvature function $F:=F(\W)$ of hypersurfaces $\Sigma_t$ in $\HH^{n+1}$.  
 \begin{prop}\label{basic evolution eq}
		Along the general flow \eqref{more general flow}, there holds
		\begin{enumerate} 
			\item $\p_t g_{ij}=2fh_{ij}+\n_i \T_j+\n_j\T_i$.
			\item $\p_tdA_t =\left(fH+\div(\T)\right)dA_t.$ 
			\item $\p_t\nu =-\n f+h(e_i,\T)e_i$.	
			\item $\p_t h_{ij}=-\n^2_{ij}f +fh_{ik}h_{j}^k+fg_{ij} +\n_\T h_{ij}+h_{j}^k\n_i\T_k+h_{i}^k\n_j \T_k.$
			\item $\p_t h^i_j=-\n^i\n_{j}f -fh_{j}^kh^{i}_k+f g_{j}^{i}+\n_\T h^i_j.$
			\item $\p_t H=-\Delta f-|h|^2 f+ nf+\n_\T H$.
                \item $\partial_{t}F=-F_{i}^{j}\n^{i}\n_{j}f-f F_{i}^{j}h_{j}^{k}h^{i}_{k}+f F_{i}^{j}g_{j}^{i}+\<\n F, \T\>$, where $F_{j}^{i}:=\frac{\partial F }{\partial h_{i}^{j}}$.
		\end{enumerate}
	\end{prop}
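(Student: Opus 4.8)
The plan is to derive the evolution equations in Proposition~\ref{basic evolution eq} by the standard moving-frame computation for geometric flows, carrying along the tangential term $\mathcal{T}$ throughout. Since the only ambient-geometry input is that $\HH^{n+1}$ has constant sectional curvature $-1$, all the curvature terms will be computed via the Gauss and Codazzi equations with that specific curvature tensor, $\ov R_{ijkl} = -(g_{ik}g_{jl} - g_{il}g_{jk})$ for the induced metric; the capillary boundary condition plays no role here (it enters only later, through \eqref{T bdry}), so this is a pointwise interior computation.

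First I would record the basic commutation setup: let $x(\cdot,t)$ satisfy $\p_t x = f\nu + \mathcal{T}$ with $\mathcal{T}$ tangential, write $e_i = \p_i x$, and use $[\p_t, \p_i] = 0$ so that $\p_t e_i = \p_i(f\nu) + \p_i\mathcal{T}$. For (1), I compute $\p_t g_{ij} = \p_t \langle e_i, e_j\rangle$ using the Weingarten relation $D_{e_i}\nu = h_i^k e_k$ and $D_{e_i}e_j = \Gamma_{ij}^k e_k - h_{ij}\nu$; the $f\nu$ part contributes $2f h_{ij}$ and the $\mathcal{T}$ part contributes $\n_i\mathcal{T}_j + \n_j\mathcal{T}_i$ after decomposing into tangential/normal components. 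Item (2) follows from (1) via $\p_t dA_t = \tfrac12 g^{ij}\p_t g_{ij}\, dA_t$, using $g^{ij}h_{ij} = H$ and $g^{ij}\n_i\mathcal{T}_j = \div(\mathcal{T})$. For (3), differentiate $\langle\nu,\nu\rangle = 1$ and $\langle\nu, e_i\rangle = 0$: the first gives $\p_t\nu$ tangential, the second gives $\langle\p_t\nu, e_i\rangle = -\langle\nu, \p_t e_i\rangle = -\p_i f + h(e_i,\mathcal{T})$, yielding $\p_t\nu = (-\n^i f + h^i_k\mathcal{T}^k)e_i$.

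The heart of the computation is item (4), the evolution of $h_{ij}$. I would use $h_{ij} = -\langle D_{e_i}e_j, \nu\rangle = \langle D_{e_i}\nu, e_j\rangle$ and differentiate, or equivalently start from $h_{ij} = \langle \p_i\p_j x, \nu\rangle$ (Euclidean-style in a local frame) — cleanest is to compute $\p_t h_{ij}$ via $\p_t\langle D_{e_i}e_j,\nu\rangle$ and reorganize. The $f$-terms produce $-\n_{ij}^2 f + f h_{ik}h^k_j$ from the Simons-type manipulation of second derivatives of $\nu$, plus the genuinely hyperbolic term $+f g_{ij}$ coming from the $-1$ curvature (this is where $\ov{\Rm}$ enters: commuting $\n_i\n_j\nu$ picks up $\ov R_{\,\cdot\, i j \,\cdot}$). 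The $\mathcal{T}$-terms assemble into the Lie-derivative expression $\n_\mathcal{T} h_{ij} + h^k_j\n_i\mathcal{T}_k + h^k_i\n_j\mathcal{T}_k$; this is cleanest if one recognizes that under the pure tangential flow $\p_t x = \mathcal{T}$, the metric and second fundamental form just undergo the diffeomorphism $\mathcal{L}_\mathcal{T}$, so $\p_t^{\mathcal{T}} h_{ij} = (\mathcal{L}_\mathcal{T} h)_{ij}$, which is exactly that combination. Item (5) follows from (4) and (1) by $\p_t h^i_j = \p_t(g^{ik}h_{kj}) = -g^{ia}(\p_t g_{ab})g^{bk}h_{kj} + g^{ik}\p_t h_{kj}$, where the $2fh_{ab}$ part of $\p_t g_{ab}$ cancels the $fh_{ik}h^k_j$ term leaving $-fh^k_j h^i_k$ (sign as stated), and the $\mathcal{T}$-derivative-of-metric terms combine with the $\mathcal{T}$-terms in (4) to give precisely $\n_\mathcal{T} h^i_j$. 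Item (6) is the trace $g^i_j$-contraction of (5), using $g^{ij}\n^2_{ij}f = \Delta f$ and $h^i_j h^j_i = |h|^2$ and $g^i_i = n$. Item (7) is the chain rule: $\p_t F = F^j_i\, \p_t h^i_j$, and substituting (5) gives $-F^j_i\n^i\n_j f - f F^j_i h^k_j h^i_k + f F^j_i g^i_j + \langle\n F,\mathcal{T}\rangle$, since $F^j_i \n_\mathcal{T} h^i_j = \n_\mathcal{T} F = \langle \n F, \mathcal{T}\rangle$ by the chain rule again (here one uses that $F$ depends on the point only through $\W$).

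The main obstacle is the bookkeeping in (4): correctly tracking which second-covariant-derivative commutators produce the ambient curvature term $+fg_{ij}$ versus the intrinsic Ricci terms that must cancel against the $h_{ik}h^k_j$ contributions, and making sure the tangential term is packaged as a genuine Lie derivative rather than an ad hoc collection of $\n\mathcal{T}$ terms. I would isolate the $f$-part (setting $\mathcal{T}=0$, recovering the known normal-flow evolution in a space form) and the $\mathcal{T}$-part (setting $f=0$, where the flow is reparametrization and everything is $\mathcal{L}_\mathcal{T}$) separately, then add them by linearity of $\p_t$ in the velocity field — this splitting is what keeps the computation honest and is how I would present it.
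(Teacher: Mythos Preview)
Your proposal is correct and follows essentially the same approach as the paper: the paper's proof simply cites the standard moving-frame computations in \cite{Hui86} and \cite{WX2022} and notes that the ambient sectional curvature is $-1$ (which produces the $+fg_{ij}$ term in (4)), while you sketch those computations explicitly, with the clean organizational device of separating the $f\nu$-part (normal variation in a space form) from the $\mathcal T$-part (pure Lie derivative under reparametrization). The derivations of (5)--(7) from (1) and (4) via the chain rule and tracing are exactly what the paper has in mind when it says ``using (4), then (5)--(7) can be derived directly''.
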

 \begin{proof}
   The first three equations follow a similar computation as in \cite[Lemma 3.3]{Hui86} or \cite[Proposition~2.11]{WX2022}. For (4), it can be derived by combining with \cite[Proposition~2.11]{WX2022} and    \cite[Theorem 3.4]{Hui86}, just noticing now the ambient space has negative constant sectional curvature $-1$. Using (4), then (5)-(7) can be derived directly using the same argument as in \cite[Proposition 2.11 (5)-(7)]{WX2022} respectively.
     \end{proof}
 
 Now we are ready to prove Theorem \ref{variation thm}.
\begin{proof}[\textbf{Proof  of Theorem \ref{variation thm}.}]
Choose an orthonormal frame $\{e_{\alpha}\}_{\alpha=2}^{n}$ of $T\partial\Sigma$ such that $\{e_{1}=\mu, (e_{\alpha})_{\alpha=2}^{n}\}$ forms an orthonormal frame for $T\Sigma$. Taking the time derivative of capillary boundary condition \eqref{capi-boundary}, using Proposition \ref{basic evolution eq} and \eqref{co-normal bundle}, we have
 \begin{eqnarray*}
     0&=&\bar{g}\left(\partial_{t}\nu, \ov{N}(x(\cdot, t))\right)+\bar{g}\left(\nu(x(\cdot, t)), d\ov{N}(f\nu+\T)\right)\\
     &=&-\sin\theta\n_{\mu}f+\sin\theta h(e_{i}, \T)\bar{g}(e_{i},\mu)\\
     &=&-\sin\theta\n_{\mu}f+\sin\theta h(\mu,\mu)\cot\theta f,
 \end{eqnarray*}
 hence
 \begin{eqnarray}\label{boundary condition}
     \n_{\mu}f=\cot\theta h(\mu, \mu)f \quad {\rm{on}}\quad \partial\Sigma_{t}.
 \end{eqnarray}
   Flow \eqref{general flow} induces a hypersurface flow along $\partial \Sigma_{t}\subset \H$ with the normal speed $\frac{f}{\sin\theta}$, i.e.,
    \begin{eqnarray}   \label{flow-partial Sigma}   \partial_{t}x|_{\partial\Sigma_{t}}=f\nu+f\cot\theta\mu=\frac{f}{\sin\theta}\ov{\nu}.
    \end{eqnarray}
    Applying \eqref{variation of quer} to $\wh{\p\S_t}\subset \H$ along the flow \eqref{flow-partial Sigma}, for any $0\leq k\leq n-1$, %(see e.g. \cite[Proposition 3.1]{WX2014})
    \begin{eqnarray}\label{bry variation}\label{bry variation}
        \frac{d}{dt}\mathcal{W}_{k}^\H (\widehat{\partial\Sigma_{t}})=\frac{n-k}{n}\int_{\partial\Sigma_{t}}H_{k}^{\partial\Sigma_{t}}\frac{f}{\sin\theta}ds_{t},
    \end{eqnarray}
    where $H_{k}^{\partial\Sigma_{t}}:=\frac 1 {\binom{n-1}{k} } \s_k^{\p\S_t}$ is the normalized $k$-th mean curvature  of $\partial\Sigma_{t}\subset \H$, $ds_{t}$ is the area element of $\partial\S_{t}$ w.r.t the induced metric.

    First, we show that \eqref{variation formula} for the case $k=-1$ and $k=0$.   
    From direct computations,
 we see    \begin{eqnarray*}\label{variation of vol}
        \frac{d}{dt}\mathcal{A}_{0, \theta}(\widehat{\Sigma_{t}}) %=\frac{d}{dt}(|\widehat{\Sigma_{t}}|)
        =\int_{\Sigma_{t}}f dA_{t}.
    \end{eqnarray*}
By  Proposition \ref{basic evolution eq} (2) and \eqref{T bdry}, %$\T|_{\partial \Sigma_{t}}=f\cot\theta \mu$, we have
    \begin{eqnarray*}\label{variation of area} 
    \begin{aligned}
    \frac{d}{dt}\left(\mathcal{A}_{1,\theta}(\widehat{\Sigma_{t}})\right)
        =&\frac{1}{n+1}\int_{\Sigma_{t}}\left(fH+{\rm{div}}(\T)\right)dA_{t}-\frac{\cot\theta}{n+1}\int_{\partial \Sigma_{t}}f ds_{t}\\
        =&\frac{n}{n+1}\int_{\Sigma_{t}}f H_{1}dA_{t}+\frac{1}{n+1}\int_{\partial \Sigma_{t}}\bar{g}(\T, \mu)ds_{t}-\frac{\cot\theta}{n+1}\int_{\partial \Sigma_{t}}f ds_{t}\\
        =&\frac{n}{n+1}\int_{\Sigma_{t}}f H_{1}dA_{t}.
        \end{aligned}
    \end{eqnarray*}
In order to prove the case $1\leq k \leq n-1$ in \eqref{variation formula}, we first \textbf{claim} that: 
when $k$ is even 
    \begin{eqnarray}\label{even case}
    \begin{aligned}
       & \frac{d}{dt}\mathcal{W}_{k+1}(\widehat{\Sigma_{t}})\\
       =&\frac{n+1-(k+1)}{n+1}\int_{\Sigma_{t}}f H_{k+1}d A_{t}+\frac{\cos\theta}{n+1}(\sin\theta)^{k}\frac{d}{dt}\mathcal{W}_{k}^\H(\widehat{\partial\S_{t}})\\
        &-\frac{\cos\theta}{n+1}\sum\limits_{l=0}^{\frac{k}{2}-1}\left((-1)^{l}(\sin\theta)^{k-2l-2}\frac{d}{dt}\mathcal{W}^\H_{k-2l-2}(\widehat{\partial\S_{t}})\prod_{s=0}^{l}\frac{k-2s}{n-k+2(s+1)}\right),
        \end{aligned}
    \end{eqnarray}
     and when $k$ is odd
    \begin{eqnarray}\label{odd case}
      \begin{aligned}
        &\frac{d}{dt}\mathcal{W}_{k+1}(\widehat{\Sigma_{t}})\\
        =&\frac{n+1-(k+1)}{n+1}\int_{\Sigma_{t}}f H_{k+1}dA_{t}+\frac{\cos\theta}{n+1}(\sin\theta)^{k}\frac{d}{dt}\mathcal{W}_{k}^\H(\widehat{\partial\S_{t}})\\
        &-\frac{\cos\theta}{n+1}\sum\limits_{l=0}^{\frac{k-1}{2}-1}\left((-1)^{l}(\sin\theta)^{k-2l-2}\frac{d}{dt}\mathcal{W}^\H_{k-2l-2}(\widehat{\partial\S_{t}})\prod_{s=0}^{l}\frac{k-2s}{n-k+2(s+1)}\right).
        \end{aligned}
    \end{eqnarray}
We prove the \textbf{claim} using the induction argument. % In view of \eqref{variation of vol} and \eqref{variation of area}, 
    Assume %it is true for $k=-1, 0$. Assume 
   that $k$ is even and \eqref{even case} is true for $k$,
    using Lemma \ref{prop2.1}, Proposition \ref{basic evolution eq} (2) (5), Proposition \ref{basic-capillary} (1) (2), the divergence theorem and  \eqref{boundary condition},  we derive 
    \begin{eqnarray*}
       && \frac{d}{dt}\int_{\S_{t}}H_{k+2}dA_{t}\\&=&\int_{\S_{t}}\frac{\partial H_{k+2}}{\partial h_{i}^{j}} \left(-f^{i}_{j}-fh_{j}^{k}h_{k}^{i}+f\bar{g}_{j}^{i}+\bar{g}(\n h_{j}^{i}, \T)\right)dA_{t}+\int_{\S_{t}}H_{k+2}(nfH_1+{\rm{div}}(\mathcal{T}))dA_{t}\\
     %   &=&(k+3)\int_{\S_{t}}f\sigma_{k+3}d\mu_{t}+(n-k-1)\int_{\S_{t}}f\sigma_{k+1}d\mu_{t}        -\int_{\partial\S_{t}}\frac{\partial\sigma_{k+2}}{\partial h_{i}^{j}}f^{i}\mu_{j}ds_{t}\\        &&+\int_{\partial\S_{t}}\sigma_{k+2}\bar{g}(\T,\mu)ds_{t}\\
        &=&(n-k-2)\int_{\S_{t}}fH_{k+3}dA_{t}+(k+2) \int_{\S_{t}}f H_{k+1}dA_{t}
        +\frac{n-k-2} n \int_{\partial\S_{t}}\cos\theta \sin^{k+1}\theta fH_{k+2}^{\p \S_t}  ds_{t}.
    \end{eqnarray*}
Together with \eqref{bry variation} and \eqref{even case},  it follows
\begin{eqnarray*}
    \frac{d}{dt}\mathcal{W}_{k+3}(\widehat{\S_{t}})&=&\frac{1}{n+1} \frac{d}{dt}\left(\int_{\S_{t}}H_{k+2}dA_{t}\right)-\frac{k+2}{n-k}\frac{d}{dt}\mathcal{W}_{k+1}(\widehat{\S_{t}})\\
    &=&\frac{n+1-(k+3)}{n+1}\int_{\S_{t}}fH_{k+3}dA_{t}+\frac{k+2}{n+1}\int_{\S_{t}}fH_{k+1}dA_{t}\\
    &&+\frac{\cos\theta}{n+1}(\sin\theta)^{k+2} \frac{d}{dt}\mathcal{W}_{k+2}^\H(\widehat{\partial\S_{t}})-\frac{k+2}{n-k}\frac{d}{dt}\mathcal{W}_{k+1}(\widehat{\S_{t}})\\
    %-\frac{k+2}{n-k}\Big(\frac{n-(k+1)}{n+1}\int_{\S_{t}}f H_{k+1}d\mu_{t}+\frac{1}{n+1}(\sin\theta)^{k}\cos\theta\frac{d}{dt}\mathcal{W}_{k}(\widehat{\partial\S_{t}})
    %\Big)
    %\\
    &=&\frac{n+1-(k+3)}{n+1}\int_{\S_{t}}f H_{k+3}dA_{t}+\frac{\cos\theta}{n+1}(\sin\theta)^{k+2}\frac{d}{dt}\mathcal{W}_{k+2}^\H(\widehat{\partial\S_{t}})\\
    &&-\frac{\cos\theta}{n+1}\sum\limits_{l=0}^{\frac{k}{2}}\left((-1)^{l}(\sin\theta)^{k-2l}\frac{d}{dt}\mathcal{W}_{k-2l}^\H(\widehat{\partial\S_{t}})\prod_{s=0}^{l}\frac{k+2-2s}{n-k+2s}\right),
\end{eqnarray*}where we have used the inductive assumption \eqref{even case} of $k$ in the last equality. Therefore, \textbf{claim} \eqref{even case} is proved,  and using a similar argument yields \eqref{odd case} when $k$ is odd. Combining \eqref{bry variation}, \eqref{even case} and \eqref{odd case}, we conclude that \eqref{variation formula} holds and completes the proof of Theorem \ref{variation thm}. 
        \end{proof}

        \section{Locally constrained inverse curvature flow}\label{sec3}
    In this section, we introduce a locally constrained inverse curvature flow inspired by the idea in \cite{BGL}, see also \cite{GL2014, GLW} etc. Let $\S_0\subset {\H}^{+}$ be a smooth,  strictly convex capillary hypersurface, given by the embedding $x_0: M\to \mathbb{\H}^{+}$, and $\S_t\subset \H^{+}$ be a family of smooth,  strictly convex capillary hypersurfaces, given by $x(\cdot, t): M\rightarrow \H^{+}$ starting from $x(M,0)=\S_0$ and satisfying  
    	\begin{equation}\label{flow with cap-normal}
		\begin{array}{rlll}
	 \partial_t x(\cdot, t)  &=& \frac{1}{\sin^2\theta} f(\cdot, t) \wt \nu(\cdot, t), \quad &
			\hbox{ in }M\times[0,T),\\
			\bar{g}\left( \wt \nu(\cdot, t),\ov{N}\circ x(\cdot,t)\right)  &=& 0, 
			\quad & \hbox{ on }\partial M\times [0,T),
		\end{array}
	\end{equation}
where we denote \begin{eqnarray}\label{capillary normal nu}
    \wt\nu:=\nu- \frac{\cos\theta}{V_0} Y_{n+1},\end{eqnarray} and call it \textit{the capillary outward normal} of $\S$ in $\HH^{n+1}$. The choice of $\wt\nu$ ensures that the boundary of $x(M,t)$  evolves inside $\H$ along the flow \eqref{flow with cap-normal}, as indicated by the boundary condition, see also Eq. \eqref{wt nu bdry}.
We remark that $\wt \nu$ is not a unit vector field as the usual normal vector field $\nu$ when $\theta\neq \frac \pi 2$. From Eqs. \eqref{co-normal bundle} and \eqref{Y}, we see
\begin{eqnarray*}
    Y_{n+1}|_{\p M}=V_0(\cos\theta \nu-\sin\theta \mu),
\end{eqnarray*}it follows
\begin{eqnarray}\label{wt nu bdry}
    \wt\nu|_{\p M}= \sin\theta(\sin\theta \nu+\cos\theta \mu)=\sin\theta \ol\nu,
\end{eqnarray}then the boundary equation in \eqref{flow with cap-normal} is equivalent to the capillary boundary condition \eqref{capi-boundary}, as stated  \begin{eqnarray*}    \bar g(\nu, \ol N\circ x)=-\cos\theta,~~~~~~~~~~~\p M.\end{eqnarray*} 
In other words, up to a tangential diffeomorphism on $T\S_t$, \eqref{flow with cap-normal} is equivalent to the following flow
	\begin{equation}\label{flow with capillary}
		\begin{array}{rlll}
		\left(\partial_t x(\cdot, t)\right)^{\perp}  &=& f(\cdot, t) \nu(\cdot, t), \quad &
			\hbox{ in }M\times[0,T),\\
			\bar{g}\left( \nu(\cdot, t),\ov{N}\circ x(\cdot,t)\right)  &=&  -\cos\theta, 
			\quad & \hbox{ on }\partial M\times [0,T).
		\end{array}
	\end{equation} 
 From now on, we specifically define the speed function $f$ in  \eqref{flow with cap-normal} and \eqref{flow with capillary} as follows: \begin{eqnarray}\label{speed f}
		f:=\frac{V_{0}-\cos\theta\bar{g}(Y_{n+1},\nu)}{F}-\bar{g}(x,\nu),
	\end{eqnarray} and  \begin{eqnarray}\label{F=high order}F:=\frac{H_n}{H_{n-1}}.%, ~~~v:=\bar g(\phi(\rho)\p_\rho,\nu).  %=\frac{n\sigma_n }{\sigma_{n-1}}.
	\end{eqnarray}    When $\theta=\frac \pi 2$, the flows \eqref{flow with cap-normal} and \eqref{flow with capillary} reduce to the flow \eqref{BGL-flow}, which was introduced by Brendle-Guan-Li in  \cite{BGL} for closed hypersurfaces in $\HH^{n+1}$.  Along our flow \eqref{flow with cap-normal} or \eqref{flow with capillary}, we have the following monotone property for the quermassintegrals $\A_{k,\theta}$, which is crucial for us to show Theorem \ref{thm AF}.
         \begin{prop}\label{monotone along flow}
		Along the flow \eqref{flow with capillary}, 
		$ \mathcal{A}_{n,\theta}(\widehat{\Sigma}_{t})$ is preserved and $ \mathcal{A}_{k,\theta}(\widehat{\Sigma}_{t})$ is non-decreasing with respect to the time $t>0$ for $1\leq k\leq n-1$.
	\end{prop}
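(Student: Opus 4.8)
The plan is to combine the first variational formula of Theorem \ref{variation thm} with the Minkowski formula of Proposition \ref{Minkowski} and the Newton--Maclaurin inequality of Lemma \ref{pro2.3}(1). By Theorem \ref{variation thm} (applied with the speed function $f$ of \eqref{speed f}, which is exactly the normal component of the velocity in the capillary flow \eqref{flow with capillary}), we have for $0\leq k\leq n$
\begin{eqnarray*}
  \frac{d}{dt}\mathcal{A}_{k,\theta}(\widehat{\Sigma_t})
  =\frac{n+1-k}{n+1}\int_{\Sigma_t} f\, H_k\, dA_t
  =\frac{n+1-k}{n+1}\int_{\Sigma_t}\left(\frac{(V_0-\cos\theta\,\bar g(Y_{n+1},\nu))H_k}{F}-H_k\,\bar g(x,\nu)\right)dA_t.
\end{eqnarray*}
First I would treat the case $k=n$: since $F=\frac{H_n}{H_{n-1}}$ by \eqref{F=high order}, the integrand becomes $(V_0-\cos\theta\,\bar g(Y_{n+1},\nu))H_{n-1}-H_n\,\bar g(x,\nu)$, and the Minkowski formula \eqref{Minkowski formula} with $k=n$ says precisely that this integrates to zero. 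Hence $\frac{d}{dt}\mathcal{A}_{n,\theta}(\widehat{\Sigma_t})=0$, giving the preservation.

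For the monotonicity of $\mathcal{A}_{k,\theta}$ with $1\leq k\leq n-1$, I would rewrite
\begin{eqnarray*}
  \frac{n+1}{n+1-k}\,\frac{d}{dt}\mathcal{A}_{k,\theta}(\widehat{\Sigma_t})
  =\int_{\Sigma_t}\frac{H_k}{F}\Big(V_0-\cos\theta\,\bar g(Y_{n+1},\nu)\Big)dA_t
   -\int_{\Sigma_t}H_k\,\bar g(x,\nu)\,dA_t.
\end{eqnarray*}
To the second integral I apply the Minkowski formula \eqref{Minkowski formula} with index $k+1$, which gives $\int_{\Sigma_t}H_{k+1}\,\bar g(x,\nu)\,dA_t=\int_{\Sigma_t}H_k\big(V_0-\cos\theta\,\bar g(Y_{n+1},\nu)\big)dA_t$; but I actually want to replace $\int H_k\bar g(x,\nu)$ directly, so instead I use \eqref{Minkowski formula} at index $k$ in reverse: $\int_{\Sigma_t}H_k\,\bar g(x,\nu)\,dA_t=\int_{\Sigma_t}H_{k-1}\big(V_0-\cos\theta\,\bar g(Y_{n+1},\nu)\big)dA_t$. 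Substituting, and writing $\Phi:=V_0-\cos\theta\,\bar g(Y_{n+1},\nu)$ for brevity, the derivative becomes (up to the positive factor) $\int_{\Sigma_t}\Phi\big(\frac{H_k}{F}-H_{k-1}\big)dA_t$. Now on a strictly convex capillary hypersurface $\Phi>0$ (this should follow from the geometry, since $\Phi$ is, up to the conformal factor, essentially the capillary support function and is positive when $\Sigma_t\subset\widehat{\mathcal{C}_{\theta,r_0}}$ — here I would invoke Proposition \ref{C0 estimates} / the barrier argument), so it suffices to show $\frac{H_k}{F}\geq H_{k-1}$ pointwise, i.e. $\frac{H_k}{H_{k-1}}\leq F=\frac{H_n}{H_{n-1}}$. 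This is exactly the generalized Newton--Maclaurin inequality of Lemma \ref{pro2.3}(1) with $(r,s)=(n,n-1)$ and our pair being $(k,k-1)$: since $k\le n$, $k-1\le n-1$, and $\kappa\in\Gamma_n$ by strict convexity, we get $\big(\frac{H_k}{H_{k-1}}\big)\leq\big(\frac{H_n}{H_{n-1}}\big)$. Therefore the integrand is nonnegative and $\frac{d}{dt}\mathcal{A}_{k,\theta}(\widehat{\Sigma_t})\geq 0$.

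The main obstacle I anticipate is the positivity of $\Phi=V_0-\cos\theta\,\bar g(Y_{n+1},\nu)$ along the flow. Unlike the closed case $\theta=\frac{\pi}{2}$, where $\Phi=V_0=\cosh\rho>0$ trivially, here one needs $\cos\theta\,\bar g(Y_{n+1},\nu)<V_0$ everywhere on $\Sigma_t$; this is where the barrier assumption \eqref{add_con} and its preservation under the flow (Proposition \ref{C0 estimates}, the avoidance principle) enter, since for $\Sigma_t\subset\widehat{\mathcal{C}_{\theta,r_0}}$ one compares with the model cap where, by Proposition \ref{sup of model}, $\Phi=\frac{1+r_0^2\sin^2\theta}{2r_0}\bar g(x,\nu)>0$. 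Once this geometric positivity is secured, the rest is a direct assembly of the variational formula, the Minkowski formula, and Newton--Maclaurin, with no delicate estimates. Finally, I would remark that the equivalence of \eqref{flow with cap-normal} and \eqref{flow with capillary} up to tangential diffeomorphism (already noted in the text) means the monotonicity statements, being expressed purely in terms of the intrinsic quantities $\mathcal{A}_{k,\theta}(\widehat{\Sigma_t})$, hold along either formulation.
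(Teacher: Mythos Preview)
Your approach is the same as the paper's: apply Theorem \ref{variation thm}, then the Minkowski formula \eqref{Minkowski formula} at index $k$, and finish with Newton--Maclaurin. Two points need correcting.

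First, your ``i.e.'' step has the inequality direction reversed twice. The requirement $\frac{H_k}{F}\geq H_{k-1}$ is equivalent to $\frac{H_k}{H_{k-1}}\geq F=\frac{H_n}{H_{n-1}}$ (not $\leq$), and Lemma \ref{pro2.3}(1) applied with the lemma's $(k,l)=(n,n-1)$ and $(r,s)=(k,k-1)$ gives $\frac{H_n}{H_{n-1}}\leq\frac{H_k}{H_{k-1}}$ (not the reverse). The two flips cancel, so your conclusion survives, but the written reasoning is wrong as it stands.

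Second, the positivity of $\Phi=V_0-\cos\theta\,\bar g(Y_{n+1},\nu)$ is far simpler than you make it. From \eqref{Y norm}--\eqref{ellip} one has $\bar g(Y_{n+1},\nu)\leq |Y_{n+1}|_{\bar g}<V_0$ on $\mathrm{int}(\Sigma_t)$, so $\Phi>0$ follows immediately since $\cos\theta\in[0,1)$ for $\theta\in(0,\tfrac{\pi}{2}]$. No barrier, no Proposition \ref{C0 estimates}, and no comparison with the model cap is needed for this step.
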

	
	\begin{proof}
	 From  Proposition  \ref{Minkowski} and Theorem \ref{variation thm}, we have 
		\begin{eqnarray*}
			\frac{d}{dt}  \mathcal{A}_{n,\theta}(\widehat{\S_t})=\frac{1}{n+1}\int_{\S_t}  \left[H_{n-1}\left(V_{0}-\cos\theta\bar{g}(Y_{n+1},\nu) \right)-H_n\bar{g}(x,\nu)\right]dA_{t}  = 0.
		\end{eqnarray*}
Using Proposition  \ref{Minkowski} and Theorem \ref{variation thm} again, together with Lemma \ref{pro2.3} (1), for $1\leq k\leq n-1$, we derive
		\begin{eqnarray*}
			\frac{d}{dt}   \mathcal{A}_{k,\theta}(\widehat{\S_t})
			&=&\frac{n+1-k}{n+1}\int_{\S_t} H_{k}	\left[
			\frac{H_{n}}{H_{n-1}}\left(V_{0}-\cos\theta\bar{g}(Y_{n+1},\nu) \right)-\bar{g}(x,\nu) \right]dA_{t}
			\\&\geq &\frac{n+1-k}{n+1}\int_{\S_t}\left[ H_{k-1} \left(V_{0}-\cos\theta\bar{g}(Y_{n+1},\nu) \right)- H_{k} \bar{g}(x,\nu)\right]dA_t \\&= &0.
		\end{eqnarray*}
 \end{proof}
  The primary objective of this section is to establish the long-time existence and convergence of flow \eqref{flow with capillary}.
  	\begin{thm}\label{flow-result}
		Let $\S_0\subset {\H^+}$ be a smooth, strictly convex capillary hypersurface with constant angle $\theta\in (0,\frac{\pi}{2}]$, given by the embedding $x_0: M\to {\H^+}$. If  $\Sigma_{0}$ satisfies \eqref{add_con} and the origin point lies in the interior of $\widehat{\partial \S_{0}}$, then the solution $x(\cdot, t)$ to flow \eqref{flow with capillary} exists for all time $t\in [0,\infty)$.  Moreover, $x(\cdot, t)$ converges smoothly to a geodesic spherical cap $\C_{\theta, r_{*}}$ for some $r_*>0$, where $r_*$ is uniquely determined by the identity $\A_{n,\theta}(\wh{\C_{\theta,r_*}})=\mathcal{A}_{n,\theta}(\wh{\S_0})$.  
	\end{thm}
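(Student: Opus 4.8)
The proof follows the now-standard scheme for locally constrained inverse curvature flows with a capillary boundary condition, adapted to the hyperbolic ambient space. The first step is to establish short-time existence: since the flow \eqref{flow with capillary} is (after the tangential reparametrization encoded in \eqref{flow with cap-normal}) a quasilinear parabolic equation for the graph function over the initial strictly convex capillary hypersurface, with a homogeneous Neumann-type boundary condition obtained from differentiating the capillary condition, standard parabolic theory for oblique boundary value problems yields a solution on a maximal interval $[0,T)$. The main body of the proof is then a sequence of a priori estimates that are uniform in $t$ and allow one to conclude $T=\infty$ together with convergence.

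\textbf{A priori estimates.} First I would prove the $C^0$ estimate: using the assumption \eqref{add_con} together with the avoidance principle (Proposition \ref{C0 estimates}), the evolving hypersurface $\S_t$ stays trapped between a fixed inner geodesic spherical cap and $\C_{\theta,r_0}$, which bounds the radial coordinate $\rho$ from above and below, hence bounds $V_0$, $\bar g(x,\nu)$, and $\bar g(Y_{n+1},\nu)$. Next comes the $C^1$ estimate; here one shows the evolving hypersurfaces remain star-shaped with a uniform positive lower bound on the capillary support function $\wt v$ defined in \eqref{wt v}, exploiting the homogeneous Neumann boundary condition \eqref{widetilde v boundary} that $\wt v$ satisfies and applying the maximum principle (Hopf lemma at the boundary) to the evolution equation of $\wt v$. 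The heart of the argument is the $C^2$ estimate, equivalently a two-sided bound $0<c\le F\le C$ and an upper bound on the principal curvatures. The upper bound for $F$ follows from applying the maximum principle to the quantity $(V_0-\cos\theta\,\bar g(Y_{n+1},\nu))/F - \bar g(x,\nu)$ or a suitable variant, using concavity of $F=H_n/H_{n-1}$ (Lemma \ref{pro2.3}) and the boundary relation from Proposition \ref{basic-capillary}; the lower bound for $F$ comes from a similar argument at an interior or boundary minimum, and here the restriction $\theta\le\frac\pi2$ enters crucially in controlling the boundary term \eqref{bdry of H}. Preservation of convexity is obtained by a tensor maximum principle applied to the Weingarten operator, again using Proposition \ref{basic-capillary}(4) to handle the boundary gradient terms and the assumed $\theta\le\frac\pi2$.

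\textbf{Long-time existence and convergence.} With a uniform $C^2$ bound and $F$ bounded between positive constants, the flow \eqref{flow with capillary} becomes uniformly parabolic with a concave (after the change of variable) operator; Krylov--Safonov and Schauder estimates for oblique boundary problems then give uniform $C^{2,\alpha}$ and hence (bootstrapping) uniform $C^\infty$ estimates, so the solution extends past any finite $T$ and $T=\infty$. For convergence, I would use the monotonicity from Proposition \ref{monotone along flow}: $\A_{n,\theta}(\wh{\S_t})$ is constant while $\A_{k,\theta}(\wh{\S_t})$ is monotone and bounded, so $\frac{d}{dt}\A_{k,\theta}\to 0$ along a sequence, which by the equality case of the Newton--Maclaurin inequality (Lemma \ref{pro2.3}(1)) and Proposition \ref{Minkowski} forces the speed $f\to 0$, i.e. the limit is totally umbilic with the integrand of the Minkowski formula vanishing pointwise. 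By Proposition \ref{sup of model} the only such capillary hypersurfaces are the geodesic spherical caps $\C_{\theta,r_*}$, and $r_*$ is pinned down by the conserved quantity $\A_{n,\theta}(\wh{\C_{\theta,r_*}})=\A_{n,\theta}(\wh{\S_0})$; a standard argument (exponential decay of the oscillation of $F$, or uniqueness of the limit via the monotone quantity) upgrades subconvergence to smooth convergence of the full flow.

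\textbf{Main obstacle.} The principal difficulty is the $C^2$ estimate, specifically the uniform lower bound on $F$ and the preservation of strict convexity up to the boundary: the boundary terms arising in the maximum principle for $h^i_j$ and for $F^{-1}$ are not automatically of good sign, and controlling them is exactly where the hypothesis $\theta\in(0,\frac\pi2]$ is used, through the sign of $\cot\theta$ and the structure of the boundary derivatives in Proposition \ref{basic-capillary}(1)--(4); handling these terms carefully, together with keeping all estimates independent of $t$ so that no curvature concentrates as $t\to\infty$, is the technical crux of the proof.
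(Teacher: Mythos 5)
Your overall scheme is correct and follows the paper's broad structure (trap between geodesic spherical caps via the avoidance principle, two-sided bound on $F$, curvature upper bound, then uniform parabolicity, $C^\infty$ estimates, and convergence via the conserved quermassintegral). However, your description of the $C^2$ estimate misses the two specific ideas that make it work here, and misplaces where the angle restriction enters.

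First, the lower bound on $F$ is \emph{not} obtained by running the maximum principle on $F$ (or $F^{-1}$) itself at an interior or boundary minimum, as you suggest. At an interior minimum of $F$ the evolution equation \eqref{evol F} gives, after discarding the gradient terms, only $\L F \le (1-\mathcal F)v \le 0$, which is the wrong sign; the direct approach does not close. The paper's key device is the test function $P=\widetilde v\,F$, where $\widetilde v$ is the capillary support function \eqref{wt v}. The point is twofold: $\widetilde v$ satisfies the homogeneous Neumann condition \eqref{widetilde v boundary}, so $P$ inherits $\n_\mu P=0$ and the Hopf lemma rules out boundary minima; and at an interior minimum the evolution equation \eqref{P evo} contains the good term $v\mathcal F\big(\tfrac1F-\widetilde v\big)$ which, once $F$ is small relative to $\widetilde v^{-1}$, dominates the lower-order terms and yields the positive lower bound. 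In your proposal $\widetilde v$ appears only in the $C^1$ step; its role as the weight in the test function for the $F$ lower bound is the crucial ingredient and is missing.

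Second, preservation of convexity does not require a tensorial maximum principle on the Weingarten operator. Because the paper fixes $F=H_n/H_{n-1}$, which is (up to normalization) the harmonic mean of the principal curvatures, the inequality $\tfrac1{\kappa_{\min}}\le\sum_i\tfrac1{\kappa_i}=\tfrac n F$ shows $\kappa_{\min}\ge F/n$. Thus the uniform lower bound on $F$ from Proposition \ref{F lower} \emph{directly} gives a uniform positive lower bound on all principal curvatures (Corollary \ref{cor lower of curvature}); no separate tensor argument is needed, and indeed one would be hard to run at the boundary. The tensor maximum principle you propose is both unnecessary and substantially harder in the capillary setting.

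Third, the hypothesis $\theta\in(0,\tfrac\pi2]$ is used in the paper only in the proof of the \emph{upper} bound for the mean curvature (Proposition \ref{pro H estimate}), where it is needed to get the favorable boundary sign $\n_\mu H\le 0$ in \eqref{bdry of H}; it plays no role in the two-sided bound on $F$, since $\n_\mu F=0$ and $\n_\mu P=0$ hold for any $\theta\in(0,\pi)$. Your attribution of the angle restriction to the $F$ lower bound and to a convexity-preservation tensor argument is therefore off target; the genuine obstruction it resolves is the curvature concavity/boundary sign issue in the $H$ estimate.
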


\subsection{Scalar equation}
In this subsection, we reduce the flow \eqref{flow with capillary} to a scalar parabolic equation with oblique boundary value condition on $\bar{\SS}^{n}_{+}$, if the evolving hypersurfaces are star-shaped.  
Under  the polar coordinate $(r,\zeta, \theta)\in [0, 1)\times[0,\frac{\pi}{2}]\times\SS^{n-1}$,  the standard Euclidean metric in $ \BB^{n+1}_+$ has the following form 
\begin{eqnarray*}|dz|^{2}=dr^{2}+r^{2} \sigma %\bar{g}_{\bar{\SS}^{n}_{+}}
=dr^{2}+r^{2} \left(d\zeta^{2}+\sin^{2}\zeta g_{\SS^{n-1}}\right),
\end{eqnarray*} where $\sigma$ is the standard spherical metric on $\SS^n$, 
then the constant vector field $E_{n+1}$ is given by
\begin{eqnarray*}
   E_{n+1}=\cos\zeta \partial_{r}-\frac{\sin\zeta}{r}\partial_{\zeta}.
\end{eqnarray*}
On the other hand, one can also view the  hyperbolic space $\mathbb{H}^{n+1}$ as a warped product manifold $\RR_+\times \SS^n$ equipped with the metric 
    $$\bar g=d\rho^{2}+\phi^{2}(\rho)\sigma,$$
where $\phi(\rho):=\sinh \rho$.     We denote $r(x)$ and $\rho(x)$ as the geodesic distance from $x$ to the origin in Euclidean space and hyperbolic space respectively.  Then it follows (see e.g. \cite[Section~4.1]{WX2019})
     \begin{eqnarray*}
         V_{0}=\cosh \rho=\frac{1+r^{2}}{1-r^{2}},\quad \sinh \rho=\frac{2r}{1-r^{2}},
     \end{eqnarray*}
     and
\begin{eqnarray}\label{relation}
    r=\frac{e^{\rho}-1}{e^{\rho}+1}.
\end{eqnarray}
 The position vector $x$ in $\HH^{n+1}$ (in polar coordinate) can be represented as
 \begin{eqnarray*}
     x=\sinh \rho \partial_{\rho}.
 \end{eqnarray*}
Using \eqref{relation}, the constant vector field $E_{n+1}$ (in polar coordinate) is
\begin{eqnarray}\label{En1}
E_{n+1}=\cos\zeta\partial_{r}-\frac{\sin\zeta}{r}\partial_{\zeta}
    =\frac{\cos\zeta(e^{\rho}+1)^{2}}{2e^{\rho}}\partial_{\rho}-\frac{\sin\zeta(e^{\rho}+1)}{e^{\rho}-1}\partial_{\zeta}.
\end{eqnarray}
Assuming that capillary hypersurface $\Sigma$ is star-shaped to the origin in $\HH^{n+1}$, then we can reparametrize $\Sigma$ as a graph over $\bar{\SS}^{n}_{+}$. Namely, there exists a positive function $\rho$ defined on $\bar{\SS}^{n}_{+}$, such that
\begin{eqnarray*}
    \Sigma=\{\rho(\eta)\eta| \eta\in \bar{\SS}^{n}_{+}\}.
\end{eqnarray*}
Define a new function $\varphi:\bar\SS^n_+\to\RR$ by $$\varphi(\eta):=\Phi(\rho(\eta)),$$ where  
    \begin{eqnarray*}
    	\frac{d \Phi (\rho) }{d \rho}=\frac{1}{\phi(\rho)}.   \end{eqnarray*}
 Let $\eta:=(\eta^1,\cdots, \eta^n)$ be a local coordinate system of $\bar\SS^n_+$, we write $\p_i:=\p_{\eta^i}$,  
 $\varphi_{i}=\ov{\nabla}_{\partial_{i}}\varphi$ and $\varphi_{ij}=\ov{\nabla}_{\partial_{i}}\ov{\nabla}_{\partial_{j}}\varphi$, where $\ov{\nabla} $ is the Levi-Civita connection on $\bar{\mathbb{S}}^{n}_{+}$. We denote $\sigma_{ij}=\sigma(\partial_{i},\partial_{j}), \varphi^{i}=\sigma^{ij}\varphi_{j}$, then the unit outward normal vector of $\Sigma$ is given by %can be represented as 
    \begin{eqnarray}\label{normal vector}
    	\nu=\frac{1}{\omega} \left(\partial_{\rho} -\frac{\varphi^{j}}{\phi}\partial_{j}\right),
    \end{eqnarray}
    where $\omega:=\sqrt{1+|\ov{\nabla}\varphi|^{2}}$. 

    Let $X_{i}$ denote the vector $\partial_{i}+\rho_{i}\partial_{\rho}$, then $\{X_{i}\}_{i=1}^{n}$ forms a basis of the tangent space of $\Sigma$. The induced metric of $\Sigma$ can be represented as 
    \begin{eqnarray*}
    	g_{ij}=\bar g(X_{i},X_{j})=\phi^{2}\sigma_{ij}+\rho_{i}\rho_{j}=\phi^{2}(\sigma_{ij}+\varphi_{i}\varphi_{j}),
    \end{eqnarray*}
   and its inverse is given by $$g^{ij}=
    \frac{1}{\phi^{2}} \left(\sigma^{ij}-\frac{\varphi^{i}\varphi^{j}}{\omega^{2}} \right).$$
    The second fundamental form of $\Sigma$ is given by
    \begin{eqnarray*}
    	h_{ij}=-\bar g(D_{X_{i}}X_{j},\nu)=-\frac{\phi}{\omega}\left(\varphi_{ij}-\phi^{'}(\sigma_{ij}+\varphi_{i}\varphi_{j})\right),
    \end{eqnarray*}
   and
    \begin{eqnarray*}
    	h_{j}^{i}=g^{ik}h_{kj}=-\frac{1}{\omega \phi}\left(\widehat{\sigma}^{ik}\varphi_{kj}-\phi^{'}(\rho)\delta_{ij} \right),
    \end{eqnarray*}
    where $\widehat{\sigma}^{ik}:=\sigma^{ik}-\frac{\varphi^{i}\varphi^{k}}{\omega^{2}}$.
 The support function of $\Sigma$ is 
    \begin{eqnarray*}
    \bar{g}(x, \nu)=\frac{\phi}{\omega}.
    \end{eqnarray*}
     Combining \eqref{relation}, \eqref{En1} and \eqref{normal vector}, we have
     \begin{eqnarray*}
         \bar{g}(E_{n+1},\nu)=\frac{\cos\zeta (e^{\rho}+1)^{2}}{2e^{\rho}\omega}+\frac{\sin\zeta(e^{\rho}+1)\ov{\n}_{\partial_\zeta}\rho}{\omega(e^{\rho}-1)},
     \end{eqnarray*}
     and 
     \begin{eqnarray*}
         \delta(x, E_{n+1})=r\cos\zeta =\frac{\cos\zeta(e^{\rho}-1)}{e^{\rho}+1}.
     \end{eqnarray*}
  Therefore we obtain 
     \begin{eqnarray*}\label{f repre}
             f&=&\frac{V_{0}-\cos\theta\bar{g}(Y_{n+1},\nu)}{F}-\bar{g}(x,\nu)\notag\\
             &=&\frac{H_{n-1}}{H_{n}}\Bigg[\phi^{'}-\frac{\cos\theta}{2}\left(1+\left(\frac{e^{\rho}-1}{e^{\rho}+1} \right)^{2}\right)\left(\frac{\cos\zeta (e^{\rho}+1)^{2}}{2e^{\rho}\omega}+\frac{\sin\zeta(e^{\rho}+1)\phi\ov{\n}_{\partial_\zeta}\varphi}{\omega(e^{\rho}-1)}\right)\notag\\
             &&+\frac{\cos\theta\cos\zeta(e^{\rho}-1)}{e^{\rho}+1}\frac{\phi}{\omega}\Bigg]-\frac{\phi}{\omega}.
            \end{eqnarray*}
     Along $\partial \SS^{n}_{+}$, i.e. $\zeta=\frac\pi 2$, we have
     \begin{eqnarray*}
         \ov{N}\circ x=\frac{1}{\phi}\partial_{\zeta},
     \end{eqnarray*}
it follows that 
     \begin{eqnarray*}
         -\cos\theta=\bar{g}(\nu, \ov{N}\circ x)=\bar{g}\left(\frac{1}{\omega}\left(\partial_{\rho} -\frac{\varphi^{j}}{\phi} \partial_{j} \right), \frac{1}{\phi}\partial_{\zeta}
         \right)=-\frac{\ov{\nabla}_{\partial_\zeta}\varphi}{\omega},
     \end{eqnarray*}
    is equivalent to 
      \begin{eqnarray*}\label{oblique boundary}
          \ov{\n}_{\partial_{\zeta}}\varphi= \cos\theta
     \sqrt{1+|\ov{\n}\varphi|^{2}}. 
      \end{eqnarray*}
In summary, we can transform the flow \eqref{flow with capillary}
   into the following scalar parabolic flow on $\bar{\SS}^{n}_{+}$ with an oblique boundary value condition
   \begin{equation}\label{scalr flow with capillary}
		\left\{ \begin{array}{lll}
			&\partial_{t}\varphi=\frac{\omega}{\phi}f:=G(\ov\n^{2}\varphi, \ov\n\varphi, \rho, \zeta) ,  &
			\hbox{ in }\SS^{n}_{+}\times[0,T),\\
			&\ov{\n}_{\partial_{\zeta}}\varphi = \cos\theta(1+|\ov{\n}\varphi|^{2})^{\frac{1}{2}} ,
			 & \hbox{ on }\partial \SS^{n}_{+} \times [0,T),\\			
    & \varphi(\cdot,0)  = \varphi_{0}(\cdot),  & \text{ on }  \SS_{+}^{n}.
		\end{array}\right.
	\end{equation}	
  Note that  
 \begin{eqnarray}\label{Y norm}
 \bar{g}(Y_{n+1}, Y_{n+1})
 &=&\frac{(1+|x|^{2})^{2}-4[\delta(x, E_{n+1})]^{2}}{(1-|x|^{2})^{2}},
     \end{eqnarray}
and $\delta( x, E_{n+1}) > 0$ on $\text{int}(\Sigma_{t})$, it is easy to see that  
\begin{eqnarray}
  \bar{g}(Y_{n+1},\nu)\leq \sqrt{\bar g(Y_{n+1}, Y_{n+1})}< V_{0},\label{ellip}  
\end{eqnarray}then $$V_0- \cos\theta \bar g(Y_{n+1},\nu)>0.$$ Therefore, the scalar flow  \eqref{scalr flow with capillary} is strictly parabolic and hence the short-time existence for flow \eqref{flow with capillary} follows from the standard parabolic theory.

 \subsection{Evolution equations}
In order to derive the evolution equations for various geometric quantities, for convenience, we introduce the linearized operator with respect to flow \eqref{flow with capillary} as
\begin{eqnarray}\label{L operator}
    \mathcal{L}:=\partial_{t}-\frac{V_{0}-\cos\theta\bar{g}(Y_{n+1},\nu)}{F^{2}}F^{ij}\n_{i}\n_{j}-\bar{g}\left(x+\T+\frac{\cos\theta }{F} Y_{n+1}, \n\right),
\end{eqnarray}
and denote $\mathcal{F}:=\sum\limits_{i=1}^{n}\frac{\partial F}{\partial h_{i}^{i}}$.  
From Lemma \ref{prop2.1}, we know that $F=\frac{H_n}{H_{n-1}}$ satisfies 
\begin{eqnarray}\label{formula 1}
  \mathcal{F}-\frac{F^{ij}h_{ij}}{F}=\mathcal{F}-1\geq 0, ~~~~~F^{ij}h_{i}^{k}h_{kj}= F^{2}.
\end{eqnarray}
For the conformal Killing vector field $Y_{n+1}$ in \eqref{Y}, the following identities are useful for us, see e.g.  \cite[Proposition~4.3 and Proposition~4.6]{WX2019}.
\begin{prop}[\cite{WX2019}]
   Let $\{e_{i}\}_{i=1}^{n}$ be an orthonormal frame on $\Sigma$, then
    \begin{eqnarray}
    &&\n_{i}Y_{n+1}= e^{-u}   \bar{g}(x, e_{i}) E_{n+1}-e^{-u} \bar{g}(e_{i},  E_{n+1})x  ,\label{Y one}\\
        &&\n_{i}\n_{j}\left(\bar{g}(Y_{n+1}, \nu)\right)= \bar{g}(Y_{n+1}, \n h_{ij})- \bar{g}(Y_{n+1},\nu) (h^{2})_{ij}+\bar{g}(Y_{n+1},\nu)g_{ij}.~~~~~~\label{Y second}
    \end{eqnarray}
\end{prop}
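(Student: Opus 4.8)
The plan is to establish the two identities by direct computation, proving \eqref{Y one} first since it feeds into the proof of \eqref{Y second} and since it exhibits $Y_{n+1}$ as a Killing field of $(\BB^{n+1},\bar g)$.

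\textbf{Identity \eqref{Y one}.} Since $\bar g=e^{2u}\delta$ is conformal to the Euclidean metric, I would invoke the transformation rule for the Levi--Civita connection under a conformal change: if $D^{\delta}$ denotes the flat connection, then $D_X Z = D^{\delta}_X Z+(Xu)Z+(Zu)X-\delta(X,Z)\,\nabla^{\delta}u$. Differentiating \eqref{Y} componentwise gives $D^{\delta}_X Y_{n+1}=\delta(x,X)E_{n+1}-\delta(x,E_{n+1})X-\delta(X,E_{n+1})x$. Moreover $\nabla^{\delta}u=e^{u}x$, so $Xu=e^{-u}\bar g(x,X)$, and from $\delta(x,Y_{n+1})=\tfrac12(1-|x|^{2})\delta(x,E_{n+1})$ one obtains $Y_{n+1}u=\delta(x,E_{n+1})$. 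Hence the term $-\delta(x,E_{n+1})X$ in $D^\delta_X Y_{n+1}$ is cancelled by $(Y_{n+1}u)X$, the two terms proportional to $x$ coming from $(Xu)Y_{n+1}$ and $-\delta(X,Y_{n+1})\nabla^{\delta}u$ cancel, and collecting the coefficients of $E_{n+1}$ and of $x$ (using $1+V_{0}=e^{u}$ together with $\bar g=e^{2u}\delta$) leaves $D_X Y_{n+1}=e^{-u}\bar g(x,X)E_{n+1}-e^{-u}\bar g(X,E_{n+1})x$; the choice $X=e_i$ is \eqref{Y one}. Antisymmetry of the right-hand side shows $D Y_{n+1}$ is $\bar g$-skew, i.e. $Y_{n+1}$ is Killing; the same conformal computation also yields $D_X x=V_{0}X$ and $D_X E_{n+1}=e^{-u}\bar g(x,X)E_{n+1}+e^{u}\delta(x,E_{n+1})X-e^{-u}\bar g(X,E_{n+1})x$, both of which are convenient below.

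\textbf{Identity \eqref{Y second}.} Write $\psi:=\bar g(Y_{n+1},\nu)$ and differentiate $\psi$ twice along $\Sigma$ in a frame $\{e_i\}$ geodesic at the point considered. The Weingarten equation $D_{e_i}\nu=h_i{}^{k}e_k$ gives $\n_i\psi=\bar g(D_{e_i}Y_{n+1},\nu)+h_i{}^{k}\bar g(Y_{n+1},e_k)$. Differentiating again and substituting \eqref{Y one} for the $D_{e_\bullet}Y_{n+1}$ terms: the $h_i{}^{k}\bar g(Y_{n+1},e_k)$ piece yields, after the Codazzi equation — which in the space form $\HH^{n+1}$ carries no curvature correction, so that $\n h$ is totally symmetric — the term $\bar g(Y_{n+1},\n h_{ij})$, together with $-\psi(h^{2})_{ij}$ from a second use of Weingarten and terms linear in $h$ and $D_{e_\bullet}Y_{n+1}$; the $\bar g(D_{e_i}Y_{n+1},\nu)$ piece yields $\bar g(D^{2}_{e_i,e_j}Y_{n+1},\nu)$ plus further terms linear in $h$ and $D_{e_\bullet}Y_{n+1}$. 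The essential input is that, $Y_{n+1}$ being Killing and $\HH^{n+1}$ having constant sectional curvature $-1$, one has $D^{2}_{e_i,e_j}Y_{n+1}=\overline{R}(e_i,Y_{n+1})e_j$, whose normal component equals exactly $\psi g_{ij}$; equivalently, applying \eqref{Y one} twice together with $D_{e_\bullet}x=V_{0}e_\bullet$ and the formula for $D_{e_\bullet}E_{n+1}$ gives $\bar g(D^{2}_{e_i,e_j}Y_{n+1},\nu)=\psi g_{ij}$ directly. This is the only place the curvature of $\HH^{n+1}$ enters, and it produces the $+\psi g_{ij}$ in \eqref{Y second}. It then remains to collect the first-order-in-$h$ terms generated above; using \eqref{Y one}, the skew-symmetry of $D Y_{n+1}$ and the Codazzi symmetry of $\n h$, they assemble into $\bar g(Y_{n+1},\n h_{ij})$, which gives \eqref{Y second}. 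I expect this final bookkeeping — simultaneously tracking the tangential part of $Y_{n+1}$, the second fundamental form, and the curvature terms — to be the only genuine difficulty. As a sign check one may evaluate \eqref{Y second} on a geodesic sphere of radius $\rho$ centred at the origin, where $\psi$ restricts to a linear coordinate function on $\SS^{n}$, $\n h=0$, $(h^{2})_{ij}=\coth^{2}\!\rho\,g_{ij}$, and both sides reduce to $-\psi g_{ij}/\sinh^{2}\!\rho$.
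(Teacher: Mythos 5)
Your proof of \eqref{Y one} is correct. The conformal transformation of the Levi--Civita connection together with $\nabla^{\delta}u=e^{u}x$, $Y_{n+1}u=\delta(x,E_{n+1})$ and the identity $1+V_0=e^{u}$ produce exactly the cancellations you describe; the paper does not prove this itself but cites \cite[Proposition~4.3]{WX2019}, and your route is the natural one.

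For \eqref{Y second} the framework is also right, and your handling of the curvature input is correct: for the Killing field $Y_{n+1}$ in $\HH^{n+1}$, $D^{2}_{e_i,e_j}Y_{n+1}=\bar R(e_i,Y_{n+1})e_j$ has normal component exactly $\psi g_{ij}$, with $\psi=\bar g(Y_{n+1},\nu)$. The gap is in the final claim that the remaining first-order-in-$h$ terms ``assemble into $\bar g(Y_{n+1},\nabla h_{ij})$.'' Carrying the bookkeeping out in a frame geodesic at a point gives
\begin{equation*}
\nabla_i\nabla_j\psi=\psi g_{ij}-\psi(h^{2})_{ij}+\bar g(Y_{n+1},\nabla h_{ij})+h_j{}^{k}\,\bar g(D_{e_i}Y_{n+1},e_k)+h_i{}^{k}\,\bar g(D_{e_j}Y_{n+1},e_k),
\end{equation*}
where $\bar g(Y_{n+1},\nabla h_{ij})$ already comes entirely from $(\nabla_i h_j{}^{k})\bar g(Y_{n+1},e_k)$ via Codazzi, and the last two terms are the cross terms $\bar g(D_{e_i}Y_{n+1},D_{e_j}\nu)+\bar g(D_{e_j}Y_{n+1},D_{e_i}\nu)$. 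Writing $A_{ik}:=\bar g(D_{e_i}Y_{n+1},e_k)$, skew because $Y_{n+1}$ is Killing, these extra terms are the commutator $[A,h]_{ij}$: in a frame diagonalizing $h$ they equal $(\kappa_j-\kappa_i)A_{ij}$ (no sum). This is symmetric and trace-free but does not vanish on a non-umbilic hypersurface unless $h$ happens to commute with the rank-two skew map $A$. Your sanity check on a centred geodesic sphere cannot detect it because there $\bar g(x,e_k)\equiv 0$, hence $A\equiv 0$, and on any umbilic hypersurface $[A,h]=0$ trivially. The term does disappear upon contracting with any tensor commuting with $h$, such as $F^{ij}$ or $g^{ij}$, which is all the paper ever uses from \eqref{Y second}, so nothing downstream is affected; but as an identity between symmetric $2$-tensors the claimed clean assembly does not close, and this is precisely the step you deferred. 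You should either exhibit the commutator and argue it is harmless in the intended applications, or prove it vanishes for this particular $Y_{n+1}$ on the relevant class of hypersurfaces.
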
 

Now we derive the evolution equations for the induced metric and second fundamental form along the flow \eqref{flow with capillary}.
\begin{prop}
    Along the flow \eqref{flow with capillary}, there holds  
    \begin{eqnarray}\label{evo metric}
        \partial_{t}g_{ij}=2\left(\frac{V_{0}-\cos\theta\bar{g}(Y_{n+1},\nu)}{F}-\bar{g}(x,\nu)\right)h_{ij}+\n_{i}\T_{j}+\n_{j}\T_{i},
    \end{eqnarray}
    and 
    \begin{eqnarray}\label{evo hij}
    \begin{aligned}
      \partial_{t}h_{ij}=& \frac{\left(V_{0}-\cos\theta\bar{g}(Y_{n+1},\nu)\right)}{F^{2}}\left(F^{kl}\n_{k}\n_{l}h_{ij}+ F^{kl,pq}\n_{i}h_{kl}\n_{j}h_{pq}\right)\\
        &+\frac{1}{F^{2}}\left(\n_{i}(V_{0}-\cos\theta\bar{g}(Y_{n+1},\nu))\n_{j}F
    +\n_{j}(V_{0}-\cos\theta\bar{g}(Y_{n+1},\nu))\n_{i}F\right)\\
     &-2\left(V_{0}-\cos\theta\bar{g}(Y_{n+1},\nu)\right)\frac{\n_{i}F\n_{j}F}{F^{3}}+\bar{g}\left(x+\T+\frac{\cos\theta }{F}Y_{n+1},\n h_{ij}\right)\\
     &+\left(\frac{\left(V_{0}-\cos\theta\bar{g}(Y_{n+1},\nu)\right)}{F^{2}}F^{kl}\left((h^{2})_{kl}+g_{kl}\right)+V_{0}+\frac{\bar{g}(x,\nu)}{F}\right)h_{ij}\\
     &-\left(2\bar{g}(x,\nu)+\frac{\cos\theta\bar{g}(Y_{n+1},\nu)}{F}\right)(h^{2})_{ij}-\left(\frac{V_{0}-\cos\theta\bar{g}(Y_{n+1},\nu)}{F}+\bar{g}(x,\nu)\right){g}_{ij}\\
     &+h_{j}^{k}\n_{i}\T_{k}+h_{i}^{k}\n_{j}\T_{k}.
     \end{aligned}
    \end{eqnarray}
\end{prop}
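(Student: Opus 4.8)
The equation \eqref{evo metric} is immediate: writing the flow \eqref{flow with capillary} in the form \eqref{more general flow} with speed $f$ given by \eqref{speed f}, it is exactly Proposition~\ref{basic evolution eq}(1). So the whole content of the Proposition is the evolution equation \eqref{evo hij} for the second fundamental form, and I would obtain it from Proposition~\ref{basic evolution eq}(4),
$$\p_t h_{ij}=-\n^2_{ij}f+f h_{ik}h_j^k+f g_{ij}+\n_\T h_{ij}+h_j^k\n_i\T_k+h_i^k\n_j\T_k ,$$
in which the last two terms are carried over unchanged. Everything then reduces to computing the Hessian on $\S_t$ of the speed function, which I write as $f=\frac{w}{F}-\bar g(x,\nu)$ with the shorthand $w:=V_0-\cos\theta\,\bar g(Y_{n+1},\nu)$. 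By the quotient rule,
$$\n^2_{ij}\!\left(\frac{w}{F}\right)=\frac{\n^2_{ij}w}{F}-\frac{\n_i w\,\n_j F+\n_j w\,\n_i F}{F^2}-\frac{w\,\n^2_{ij}F}{F^2}+\frac{2w\,\n_i F\,\n_j F}{F^3},$$
so the four ingredients I need are the Hessians of $V_0$, of $\bar g(Y_{n+1},\nu)$, and of $\bar g(x,\nu)$, together with a Simons-type reformulation of $\n^2_{ij}F$.

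For $V_0=\cosh\rho$, the warped-product structure $\HH^{n+1}=\RR_+\times_{\sinh\rho}\SS^n$ gives $D^2V_0=V_0\,\bar g$ and, equivalently, $D_Yx=V_0\,Y$ for every vector $Y$ (so that $x$ is the ambient gradient of $V_0$); restricting a scalar through $\n^2_{ij}\psi=D^2\psi(e_i,e_j)-h_{ij}\,\nu(\psi)$ then yields $\n^2_{ij}V_0=V_0\,g_{ij}-\bar g(x,\nu)\,h_{ij}$. The Hessian of $\bar g(Y_{n+1},\nu)$ is precisely \eqref{Y second}. For $\bar g(x,\nu)$, one differentiates twice using $D_Yx=V_0\,Y$, the Weingarten equation $D_X\nu=\W(X)$, and the Codazzi equation, arriving at $\n^2_{ij}\bar g(x,\nu)=\bar g(x,\n h_{ij})+V_0\,h_{ij}-\bar g(x,\nu)(h^2)_{ij}$. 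Finally, expanding $\n^2_{ij}F=F^{kl}\n_i\n_j h_{kl}+F^{kl,pq}\n_i h_{kl}\n_j h_{pq}$ and commuting covariant derivatives by the Simons-type identity in a space of constant curvature $-1$ — that is, combining Codazzi with the Gauss equation $R_{ijkl}=(h_{ik}h_{jl}-h_{il}h_{jk})-(g_{ik}g_{jl}-g_{il}g_{jk})$ — rewrites $F^{kl}\n_i\n_j h_{kl}$ as $F^{kl}\n_k\n_l h_{ij}$ plus terms algebraic in $h$ and $g$, whose $F^{kl}$-contractions are governed by \eqref{formula 1}, namely $F^{kl}h_{kl}=F$, $F^{kl}h_k^m h_{ml}=F^2$, and $\mathcal F=F^{kl}g_{kl}$.

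Substituting these four ingredients into $-\n^2_{ij}f$ and adding back the remaining terms of Proposition~\ref{basic evolution eq}(4), I would then reorganize. The three first-order pieces — $\bar g(x,\n h_{ij})$ coming from the Hessian of $\bar g(x,\nu)$, $\frac{\cos\theta}{F}\bar g(Y_{n+1},\n h_{ij})$ coming from $\frac1F\n^2_{ij}w$, and $\n_\T h_{ij}=\bar g(\T,\n h_{ij})$ — combine into the single transport term $\bar g\!\left(x+\T+\frac{\cos\theta}{F}Y_{n+1},\,\n h_{ij}\right)$ that appears in the linearized operator $\mathcal L$ of \eqref{L operator}; the $\frac{w}{F^2}\n^2_{ij}F$ piece produces the leading part $\frac{w}{F^2}\big(F^{kl}\n_k\n_l h_{ij}+F^{kl,pq}\n_i h_{kl}\n_j h_{pq}\big)$ together with algebraic terms; and the two quotient-rule remainders are exactly the cross terms $\frac1{F^2}(\n_i w\,\n_j F+\n_j w\,\n_i F)$ and $-2w\,\frac{\n_i F\,\n_j F}{F^3}$ in \eqref{evo hij}. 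Everything else is polynomial in $h$, $g$, $F$, $\mathcal F$, $V_0$, $\bar g(x,\nu)$ and $\bar g(Y_{n+1},\nu)$, and collecting the coefficients of $h_{ij}$, $(h^2)_{ij}$ and $g_{ij}$ yields exactly the right-hand side of \eqref{evo hij}. I expect the only genuinely non-routine step to be the Simons-type computation in the fourth ingredient: the curvature terms must be tracked carefully, and it is precisely there that the special structure $F=H_n/H_{n-1}$ enters, through \eqref{formula 1}, to close up the $F^{kl}$-contractions and to produce the stated coefficients of $(h^2)_{ij}$ and $h_{ij}$; all the rest is careful but routine bookkeeping, and the capillary boundary condition plays no role in these interior evolution equations.
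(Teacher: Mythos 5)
Your proposal follows essentially the same route as the paper: start from Proposition~\ref{basic evolution eq}(4), apply the quotient rule to $-\n^2_{ij}\bigl(\tfrac{w}{F}-\bar g(x,\nu)\bigr)$ with $w=V_0-\cos\theta\,\bar g(Y_{n+1},\nu)$, substitute the Hessian formulas \eqref{V0 teo deri}, \eqref{u two deri}, \eqref{Y second} and the Simons-type identity from \cite[Eq.~(2-7)]{A94} for $\n_i\n_j F$, and then collect the coefficients of $g_{ij}$, $h_{ij}$, $(h^2)_{ij}$ and the transport term. One small inaccuracy in your commentary: the special structure $F=H_n/H_{n-1}$ does \emph{not} actually enter at this stage. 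The Simons-type identity used here is valid for any $1$-homogeneous symmetric curvature function, and the resulting contraction $F^{kl}\bigl((h^2)_{kl}+g_{kl}\bigr)$ is left unsimplified in \eqref{evo hij}; the identities in \eqref{formula 1} (in particular $F^{kl}(h^2)_{kl}=F^2$, which \emph{is} special to this $F$) are only invoked in later propositions. Otherwise the sketch is sound and reproduces the paper's argument.
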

\begin{proof}
     \eqref{evo metric}  is obvious from Proposition \ref{basic evolution eq} (1). In order to show \eqref{evo hij}, by direct calculations,  
\begin{eqnarray*}
    -\n_{i}\n_{j}f&=&-\n_{i}\n_{j}\left(\frac{V_{0}-\cos\theta\bar{g}(Y_{n+1},\nu)}{F}-\bar{g}(x,\nu)\right)\\
    &=&
\left(V_{0}-\cos\theta\bar{g}(Y_{n+1},\nu)\right)\frac{\n_{i}\n_{j}F}{F^{2}}-2\left(V_{0}-\cos\theta\bar{g}(Y_{n+1},\nu)\right)\frac{\n_{i}F\n_{j}F}{F^{3}}\\
    &&+\frac{1}{F^{2}}\Big(\n_{i}\left(V_{0}-\cos\theta\bar{g}(Y_{n+1},\nu)\right)\n_{j}F+\n_{j}\left(V_{0}-\cos\theta\bar{g}(Y_{n+1},\nu)\right)\n_{i}F\Big)\\
    &&-\frac{1}{F}\n_{i}\n_{j}\left(V_{0}-\cos\theta\bar{g}(Y_{n+1},\nu)\right)
+ \n_{i}\n_{j}\bar{g}(x,\nu).
\end{eqnarray*}
Using Simons' type identity (see e.g. \cite[Eq. (2-7)]{A94}), %it holds %and the fact that $F$ is homogeneous of degree 1, we obtain
\begin{eqnarray*}\n_{j}\n_{i}F&=&F^{kl}\n_{k}\n_{l}h_{ij}+F^{kl}\left((h^{2})_{kl}+g_{kl}\right)h_{ij}-F\left((h^{2})_{ij}+g_{ij}\right)\\
    &&+F^{kl,pq}\n_{i}h_{kl}\n_{j}h_{pq},
\end{eqnarray*}
it follows  
    \begin{eqnarray}\label{f tow deri}
    \begin{aligned}
        -\n_{i}\n_{j}f
        =&\frac{\left(V_{0}-\cos\theta\bar{g}(Y_{n+1},\nu)\right)}{F^{2}}\Big(F^{kl}\n_{k}\n_{l}h_{ij}+ F^{kl,pq}\n_{i}h_{kl}\n_{j}h_{pq}\Big)\\
       & +\frac{1}{F^{2}}\Big(\n_{i}\left(V_{0}-\cos\theta\bar{g}(Y_{n+1},\nu)\right)\n_{j}F
    +\n_{j}\left(V_{0}-\cos\theta\bar{g}(Y_{n+1},\nu)\right)\n_{i}F\Big)\\
     &-2\left(V_{0}-\cos\theta\bar{g}(Y_{n+1},\nu)\right)\frac{\n_{i}F\n_{j}F}{F^{3}}+\frac{1}{F^{2}}\left(V_{0}-\cos\theta\bar{g}(Y_{n+1},\nu)\right)\\
     &\cdot\Big(F^{kl}\big((h^{2})_{kl}+g_{kl}\big)h_{ij}-F\big((h^{2})_{ij}+g_{ij}\big)\Big)+\n_{i}\n_{j}\bar{g}(x,\nu)\\
     &-\frac{1}{F}\big(\n_{i}\n_{j}V_{0}-\cos\theta\n_{i}\n_{j}\bar{g}(Y_{n+1},\nu)\big).
    \end{aligned}
    \end{eqnarray}
Recall that (see e.g. Guan-Li \cite[Lemma 2.2 and Lemma 2.6]{GL2014}), 
    \begin{eqnarray}\label{V0 teo deri}
        \n_{i}\n_{j}V_{0}=V_{0}{g}_{ij}- \bar{g}(x, \nu) h_{ij},
    \end{eqnarray}
    and 
    \begin{eqnarray}\label{u two deri}
        \n_{i}\n_{j}\bar{g}(x,\nu)=V_{0}h_{ij}+\bar{g}(x, \n h_{ij})- \bar{g}(x,\nu) (h^{2})_{ij}.
    \end{eqnarray}
    Substituting  \eqref{Y second}, \eqref{V0 teo deri} and  \eqref{u two deri}  into  \eqref{f tow deri}, and combining with Proposition \ref{basic evolution eq} (4), we obtain \eqref{evo hij}.   
\end{proof}

We derive the evolution equation for the support function of $\S_t$ in $\HH^{n+1}$ $$v:=\bar g(x,\nu)=\bar{g}(\phi(\rho)\partial_{\rho}, \nu).$$
\begin{prop}
    Along the flow \eqref{flow with capillary}, there holds  
    \begin{eqnarray}
         \mathcal{L}v &= &\left(\frac{F^{ij}(h^2)_{ij}}{F^{2}}-1\right) V_0 v - \frac{ \bar g(x,\n V_0)}{F} -\cos\theta  v \bar g(Y_{n+1},\nu)  \frac{F^{ij}(h^2)_{ij}}{F^{2}}  \notag\\&& + \frac{\cos\theta}{F}    \bar g(\n_{i} Y_{n+1},\nu)  \bar g(x,e_i), \label{evol of v}%=&-F^{-1}\bar{g}(\n V_{0}, \n V_{0})-\cos\theta   \bar{g}(Y_{n+1},\nu) v+\cos\theta F^{-1}\bar{g}(\n_{e_{i}}Y_{n+1}, \nu)\bar{g}(x, e_{i}),~~~~~~~~
     \end{eqnarray}
    and 
    \begin{eqnarray}\label{v bry condi}
        \n_{\mu}v=\cot\theta h(\mu, \mu) v,\quad \text{on}\quad \partial\Sigma_{t}.
    \end{eqnarray}
\end{prop}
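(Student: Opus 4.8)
The plan is to compute $\p_t v$ from the flow and then feed it into the linearized operator $\mathcal L$ of \eqref{L operator}. Writing the flow in the form $\p_t x=f\nu+\T$ as in \eqref{more general flow} and using Proposition \ref{basic evolution eq}(3) together with the warped-product identity $D_V x=V_0 V$ for the position field $x=\phi(\rho)\p_\rho$ (valid since $\phi'=V_0$), I would get
\begin{eqnarray*}
\p_t v=\bar g(D_{\p_t x}x,\nu)+\bar g(x,\p_t\nu)=V_0 f-\bar g(x,\n f)+h(e_i,\T)\bar g(x,e_i).
\end{eqnarray*}
Differentiating $v=\bar g(x,\nu)$ and using the Weingarten equation gives the standard relation $\n_i v=h_i^k\bar g(x,e_k)$, so the last term equals $\bar g(\T,\n v)$, which cancels the tangential transport term of $\mathcal L$. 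Hence
\begin{eqnarray*}
\mathcal L v=V_0 f-\bar g(x,\n f)-\tfrac{V_0-\cos\theta\bar g(Y_{n+1},\nu)}{F^2}F^{ij}\n_i\n_j v-\bar g(x,\n v)-\tfrac{\cos\theta}{F}\bar g(Y_{n+1},\n v).
\end{eqnarray*}

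Next I would substitute $f=\tfrac{w}{F}-v$ with $w:=V_0-\cos\theta\bar g(Y_{n+1},\nu)$ (this is \eqref{speed f}, since $\bar g(x,\nu)=v$), so that $\n f=\tfrac{\n w}{F}-\tfrac{w\,\n F}{F^2}-\n v$ and therefore $-\bar g(x,\n f)-\bar g(x,\n v)=-\tfrac{\bar g(x,\n w)}{F}+\tfrac{w\,\bar g(x,\n F)}{F^2}$. For the second-order term I would use \eqref{u two deri} to write $F^{ij}\n_i\n_j v=V_0\,F^{ij}h_{ij}+F^{ij}\bar g(x,\n h_{ij})-v\,F^{ij}(h^2)_{ij}$; by homogeneity $F^{ij}h_{ij}=F$ (cf. \eqref{formula 1}) and by the chain rule $F^{ij}\bar g(x,\n h_{ij})=\bar g(x,e_k)\n_k F=\bar g(x,\n F)$, so $-\tfrac{w}{F^2}F^{ij}\n_i\n_j v=-\tfrac{wV_0}{F}-\tfrac{w\,\bar g(x,\n F)}{F^2}+\tfrac{wv\,F^{ij}(h^2)_{ij}}{F^2}$. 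The two $\bar g(x,\n F)$ terms cancel, and $V_0 f-\tfrac{wV_0}{F}=-V_0 v$, leaving
\begin{eqnarray*}
\mathcal L v=-V_0 v-\tfrac{\bar g(x,\n w)}{F}+\tfrac{wv\,F^{ij}(h^2)_{ij}}{F^2}-\tfrac{\cos\theta}{F}\bar g(Y_{n+1},\n v).
\end{eqnarray*}
Expanding $\bar g(x,\n w)=\bar g(x,\n V_0)-\cos\theta\,\bar g(x,\n\bar g(Y_{n+1},\nu))$ and $\tfrac{wv}{F^2}F^{ij}(h^2)_{ij}=\tfrac{V_0 v}{F^2}F^{ij}(h^2)_{ij}-\cos\theta\,v\,\bar g(Y_{n+1},\nu)\tfrac{F^{ij}(h^2)_{ij}}{F^2}$ produces the first three terms of \eqref{evol of v}. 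For the remaining piece I would compute $\n_i\bar g(Y_{n+1},\nu)=\bar g(\n_i Y_{n+1},\nu)+h_i^k\bar g(Y_{n+1},e_k)$ and combine with $\n_k v=h_k^j\bar g(x,e_j)$; the symmetry of $h$ makes the cross terms $\bar g(x,e_i)h_i^k\bar g(Y_{n+1},e_k)$ cancel against $\bar g(Y_{n+1},\n v)$, so $\tfrac{\cos\theta}{F}\bigl(\bar g(x,\n\bar g(Y_{n+1},\nu))-\bar g(Y_{n+1},\n v)\bigr)=\tfrac{\cos\theta}{F}\bar g(\n_i Y_{n+1},\nu)\bar g(x,e_i)$, the last term of \eqref{evol of v}. (If one wishes, \eqref{Y one}--\eqref{Y second} may be invoked here, but only the skew-symmetry of $D_i Y_{n+1}$ is actually needed.)

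For the boundary condition \eqref{v bry condi}, evaluating $\n_i v=h_i^k\bar g(x,e_k)$ in the direction $\mu=e_1$ and using Proposition \ref{basic-capillary}(1), i.e. $h_{\mu\a}=0$ for $\a\geq 2$, gives $\n_\mu v=h_{\mu\mu}\bar g(x,\mu)$. Since $\H$ is totally geodesic and $O\in\mathrm{int}(\widehat{\p\S})$, the radial position field $x=\phi(\rho)\p_\rho$ is tangent to $\H$ along $\p\S$, hence $\bar g(x,\ol N)=0$; together with $\ol N=\sin\theta\,\mu-\cos\theta\,\nu$ from \eqref{co-normal bundle} this forces $\bar g(x,\mu)=\cot\theta\,\bar g(x,\nu)=\cot\theta\,v$, so $\n_\mu v=\cot\theta\,h(\mu,\mu)\,v$ as claimed. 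The main effort is the bookkeeping of the cancellations of the $\n F$ and $Y_{n+1}$ cross terms in the interior equation; once $F^{ij}h_{ij}=F$, the chain rule for $\n F$, identity \eqref{u two deri}, and $\n_i v=h_i^k\bar g(x,e_k)$ are in place, the computation is a direct substitution, and the boundary identity is immediate from the tangency of $x$ to $\H$.
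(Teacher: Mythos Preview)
Your proof is correct and follows essentially the same route as the paper: both start from $\p_t v=V_0 f-\bar g(x,\n f)+h(e_i,\T)\bar g(x,e_i)$ via the conformal Killing identity $D_X x=V_0 X$, use \eqref{u two deri} together with $F^{ij}h_{ij}=F$ and $F^{ij}\bar g(x,\n h_{ij})=\bar g(x,\n F)$ to process the second-order term, and handle the boundary via $\bar g(x,\ol N)=0$ and \eqref{co-normal bundle}. Your bookkeeping is slightly more streamlined (you pass to $\mathcal L v$ immediately rather than expanding $\p_t v$ fully first), but the ingredients and cancellations are identical.
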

\begin{proof}
  Recall that $\phi\partial_{\rho}$ is a conformal Killing vector field in $\HH^{n+1}$ (see e.g. \cite[Eq. (4.1)]{GLW}), then for any  vector field $X$ on $\HH^{n+1}$,% there holds %that 
\begin{eqnarray}\label{conf}
    D_X(\phi\partial_{\rho})=V_{0}X,
\end{eqnarray}
together with Proposition \ref{basic evolution eq} (3),  \eqref{Y one} and \eqref{conf}, we derive 
\begin{eqnarray}
     \partial_{t}v&=&\bar{g}(D_{\p_{t}}(\phi \partial_{\rho}),\nu)+\bar{g}(x,D_{\partial_{t}}\nu) \notag \\
     &=&V_{0} f- \bar{g}(x, \n f)+h(e_{i}, \T)\bar{g}(x, e_{i}) \notag\\
     %&=&v\left(F^{-1}(V_{0}-\cos\theta \bar{g}(Y_{n+1},\nu))-v\right)-F^{-1}|\n V_{0}|^{2}+\cos\theta F^{-1}\bar{g}(Y_{n+1}, \n v)\\
     %&&+\cos\theta \left(\bar{g}(x^{\perp}, x^{\perp})\bar{g}(\nu, e^{-u}E_{n+1})-\cos\theta\bar{g}(x^{\perp}, e^{-u}E_{n+1})\bar{g}(x,\nu))    \right)+\bar{g}(x,\n v)+\bar{g}(T,\n v)\\
     &=&\frac{V_{0}^{2}}{F}-\frac{\cos\theta}{F} V_{0}\bar{g}(Y_{n+1},\nu)-vV_{0}-\frac{1}{F}\bar{g}(x, \n V_{0})+\bar{g}(x,\n v) \notag \\&&+\bar{g}(\T,\n v)
     +\frac{1}{F^{2}}(V_{0}-\cos\theta\bar{g}(Y_{n+1},\nu))\bar{g}(x,\n F)
     +\frac{\cos\theta}{F} \bar{g}(Y_{n+1}, \n v) \notag \\
     &&+\frac{\cos\theta}{F}  \bar{g}(\n_{i}Y_{n+1}, \nu)\bar{g}(x,e_{i}), \label{ partial_t v}
     %\left(\bar{g}(x^{T}, x^{T})\bar{g}(\nu, e^{-u}E_{n+1})-\bar{g}(x^{T}, e^{-u}E_{n+1})\bar{g}(x,\nu)    \right),
     \end{eqnarray}
     %where $x^T$ denotes the tangential projection of $x$ on $T\S$.
     From \eqref{u two deri} and Lemma \ref{prop2.1} (3), we have
     \begin{eqnarray*}
         F^{ij}\n_{i}\n_{j}v&=&F^{ij}(V_{0}h_{ij}+ \bar{g}(x, \n h_{ij})-(h^{2})_{ij}v)\\
         &=&V_{0}F+\bar{g}(x,\n F)-v F^{ij}(h^2)_{ij}.
     \end{eqnarray*}
Substituting the above equation to \eqref{ partial_t v}, by simple rearrangement of some terms, we obtain \eqref{evol of v}. %    \begin{eqnarray*}         \mathcal{L}v&=& \left(\frac{F^{ij}(h^2)_{ij}}{F^2}-1\right) V_0 v -F^{-1} \bar g(x,\n V_0) -\cos\theta  v \bar g(Y_{n+1},\nu)  \frac{ F^{ij}(h^2)_{ij} }{F^2} \\&& +\frac{\cos\theta}{F}   \bar g(\n_{e_i} Y_{n+1},\nu)  \bar g(x,e_i).%         \\ &=&-F^{-1}\bar{g}(\n V_{0}, \n V_{0})-\cos\theta v \bar{g}(Y_{n+1},\nu)+\cos\theta F^{-1}\bar{g}(\n_{e_{i}}Y_{n+1}, \nu)\bar{g}(x, e_{i}),     \end{eqnarray*}%where the last equality used \eqref{formula 1}. 
     On $\partial\Sigma_{t}$,  there holds $\bar g(x, \ov N)=0$, and \eqref{co-normal bundle} implies
     \begin{eqnarray}\label{mu to nu N}
    \mu=\cot\theta\nu+\frac{1}{\sin\theta}\ov{N},
\end{eqnarray} together with Proposition \ref{basic-capillary} (1), we derive
     \begin{eqnarray*}
         \n_{\mu}v=h(\mu,\mu)\bar{g}(x,\mu)=\cot\theta h(\mu,\mu)v.
     \end{eqnarray*}
   Hence the assertions follow.

\end{proof}

\begin{prop}\label{pro F evo}
    Along the flow \eqref{flow with capillary}, there holds
    \begin{eqnarray}
    \mathcal{L}F&=&-\frac{ 2\left(V_{0}-\cos\theta\bar{g}(Y_{n+1},\nu)\right)}{F^{3}} F^{ij}\n_{i}F\n_{j}F+\frac 2{F^2} F^{ij}\n_{i}F \notag\\
    &&\cdot\n_{j}\left(V_{0}-\cos\theta\bar{g}(Y_{n+1},\nu)\right)+(1-\mathcal{F})v+ \left(1-\frac{F^{ij}(h^{2})_{ij}}{F^2} \right) V_{0} F,\label{evol F}
\end{eqnarray}
and 
\begin{eqnarray}\label{F boundary}
    \n_{\mu}F=0,\quad \text{on}\quad \partial\Sigma_{t}.
\end{eqnarray}
\end{prop}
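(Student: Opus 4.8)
The plan is to obtain \eqref{evol F} by specializing the general evolution identity of Proposition \ref{basic evolution eq}(7) to the speed \eqref{speed f}, and to obtain the Neumann condition \eqref{F boundary} by combining the boundary identities \eqref{boundary condition} and \eqref{v bry condi} with a short computation of a normal derivative. Throughout set $\Psi:=V_{0}-\cos\theta\,\bar g(Y_{n+1},\nu)$ and $v:=\bar g(x,\nu)$, so that $f=\Psi/F-v$.

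For the interior equation I would start from Proposition \ref{basic evolution eq}(7), namely $\partial_t F = -F^{ij}\nabla_i\nabla_j f - f\,F^{ij}(h^2)_{ij} + f\,\mathcal{F} + \langle\nabla F,\mathcal{T}\rangle$ with $\mathcal{F}=F^{ij}g_{ij}$, and expand $\nabla_i\nabla_j f=\nabla_i\nabla_j(\Psi/F)-\nabla_i\nabla_j v$ by the Leibniz rule. The only term carrying second derivatives of $F$ is $\tfrac{\Psi}{F^{2}}F^{ij}\nabla_i\nabla_j F$, which is exactly the second-order term subtracted off in $\mathcal{L}$; it therefore drops out of $\mathcal{L}F$, and — unlike in the evolution of $h_{ij}$ — no Simons-type identity is needed here. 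Substituting the Hessian formulas \eqref{V0 teo deri}, \eqref{u two deri}, \eqref{Y second} for $\nabla_i\nabla_j V_0$, $\nabla_i\nabla_j v$, $\nabla_i\nabla_j\bar g(Y_{n+1},\nu)$, the remaining first-order terms are $\tfrac{2}{F^{2}}F^{ij}\nabla_i F\nabla_j\Psi$ and $-\tfrac{2\Psi}{F^{3}}F^{ij}\nabla_i F\nabla_j F$ (the first two terms of \eqref{evol F}), together with a term $\tfrac{\cos\theta}{F}\bar g(Y_{n+1},\nabla F)$ coming from $F^{ij}\bar g(Y_{n+1},\nabla h_{ij})=\bar g(Y_{n+1},\nabla F)$, which cancels against the transport term of $\mathcal{L}$ built from $\tfrac{\cos\theta}{F}Y_{n+1}$ (the $\bar g(x,\nabla F)$ and $\langle\nabla F,\mathcal{T}\rangle$ contributions likewise cancel against $\bar g(x+\mathcal{T},\nabla F)$). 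All surviving terms are then zeroth order; using $F^{ij}h_{ij}=F$ and regrouping via $\Psi=V_0-\cos\theta\bar g(Y_{n+1},\nu)$, the $\mathcal{F}$-multiples collapse to $(1-\mathcal{F})v$ and the $F^{ij}(h^2)_{ij}$-multiples to $\bigl(1-\tfrac{F^{ij}(h^2)_{ij}}{F^{2}}\bigr)V_0F$, which is \eqref{evol F} (by \eqref{formula 1} this last term in fact vanishes, but keeping it makes the cancellations transparent).

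For the boundary identity, since \eqref{flow with capillary} preserves the contact angle, \eqref{boundary condition} gives $\nabla_\mu f=\cot\theta\,h(\mu,\mu)f$ and \eqref{v bry condi} gives $\nabla_\mu v=\cot\theta\,h(\mu,\mu)v$ on $\partial\Sigma_t$; since $\Psi/F=f+v$, adding these yields $\nabla_\mu(\Psi/F)=\cot\theta\,h(\mu,\mu)\tfrac{\Psi}{F}$, equivalently $\Psi\,\nabla_\mu F = F\bigl(\nabla_\mu\Psi-\cot\theta\,h(\mu,\mu)\Psi\bigr)$ on $\partial\Sigma_t$. It then suffices to show $\nabla_\mu\Psi=\cot\theta\,h(\mu,\mu)\Psi$ along $\partial\Sigma_t$: one has $\nabla_\mu V_0=\bar g(x,\mu)=\cot\theta\,v$ (using $\bar g(x,\ov N)=0$ and $\mu=\cot\theta\,\nu+\csc\theta\,\ov N$), and, using \eqref{Y one}, $D_\mu\nu=h(\mu,\mu)\mu$, and the boundary value $Y_{n+1}|_{\partial M}=V_0(\cos\theta\,\nu-\sin\theta\,\mu)$, one computes $\nabla_\mu\bar g(Y_{n+1},\nu)=\tfrac{v}{\sin\theta}-h(\mu,\mu)V_0\sin\theta$; plugging in and using $\bar g(Y_{n+1},\nu)=V_0\cos\theta$ on $\partial\Sigma_t$ gives the desired relation. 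Since $\Psi>0$ by \eqref{ellip}, we conclude $\nabla_\mu F=0$, which is \eqref{F boundary}.

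The delicate part is the bookkeeping in the interior computation: the evolution of $F$ generates a large number of terms in $\nabla F$, $\bar g(Y_{n+1},\nu)$, $\bar g(Y_{n+1},\nabla F)$, $\mathcal{F}$ and $F^{ij}(h^2)_{ij}$, and one has to verify that after inserting $\Psi=V_0-\cos\theta\bar g(Y_{n+1},\nu)$ and the three Hessian identities every gradient term other than the two displayed cancels, while the zeroth-order terms reorganize exactly into the stated compact form. The boundary argument is short by comparison; its one subtlety is the explicit evaluation of $\nabla_\mu\bar g(Y_{n+1},\nu)$, where the totally geodesic support $\mathcal{H}$ and the constant contact angle are both used.
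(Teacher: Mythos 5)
Your proof is correct and follows essentially the same route as the paper: specialize Proposition~\ref{basic evolution eq}(7) to $f=\Psi/F-v$, insert the Hessian identities \eqref{V0 teo deri}, \eqref{u two deri}, \eqref{Y second}, observe that the second-derivative and transport terms in $F$ are exactly absorbed by $\mathcal{L}$, and then combine \eqref{boundary condition}, \eqref{v bry condi}, and the relation $\nabla_\mu\Psi=\cot\theta\,h(\mu,\mu)\,\Psi$ (established via \eqref{V0 bry}, \eqref{Y one} and $\ov N=-e^{-u}E_{n+1}$) to conclude $\nabla_\mu F=0$. The only minor stylistic difference is that you substitute the boundary value $\bar g(Y_{n+1},\nu)=V_0\cos\theta$ early to simplify $\nabla_\mu\bar g(Y_{n+1},\nu)$ to $\tfrac{v}{\sin\theta}-h(\mu,\mu)V_0\sin\theta$, whereas the paper keeps the general expression and lets the $v$-terms cancel algebraically; both lead to \eqref{bry condi 2} and hence to \eqref{F boundary}.
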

\begin{proof}
     From Proposition \ref{basic evolution eq} and \eqref{V0 teo deri}, \eqref{u two deri}, we obtain
    \begin{eqnarray*}
        \partial_{t}F&=&-F^{ij}\n_{i}\n_{j}f-f F^{ij}(h^{2})_{ij}+f\sum\limits_{i=1}^{n}F^{ii}+\bar{g}(\n F, \T)\\
        &=&-\frac{1}{F}F^{ij}\n_{i}\n_{j}\left(V_{0}-\cos\theta \bar{g}(Y_{n+1}, \nu)\right)+\frac{2}{F^{2}}F^{ij}\n_{i}(V_{0}-\cos\theta\<Y_{n+1},\nu\>)\n_{j}F\\
        &&+\frac{1}{F^{2}}\left(V_{0}-\cos\theta\bar{g}(Y_{n+1},\nu)\right)F^{ij}\n_{i}\n_{j}F-\frac{2}{F^{3}}\left(V_{0}-\cos\theta\bar{g}(Y_{n+1},\nu)\right)F^{ij}\n_{i}F\n_{j}F\\
        &&+F^{ij}\n_{i}\n_{j}v
        -f F^{ij}(h^{2})_{ij}+f\sum\limits_{i=1}^{n}F^{ii}+\bar{g}(\n F, \T)\\
        %&=&-\frac{V_{0}\sum\limits_{i=1}^{n}F^{ii}-F\bar{g}(x,\nu)-\cos\theta\left(\bar{g}(Y_{n+1}, \n F)-F^{ij}(h^{2})_{ij}\bar{g}(Y_{n+1},\nu)+\sum\limits_{i=1}^{n}F^{ii}\bar{g}(Y_{n+1},\nu)\right)}{F}\\
        %&&+\frac{2F^{ij}\left(V_{0}-\cos\theta\bar{g}(Y_{n+1},\nu)\right)_{i}F_{;j}}{F^{2}}+\frac{(V_{0}-\cos\theta\bar{g}(Y_{n+1},\nu))}{F^{2}}F^{ij}F_{;ij}-\frac{2(V_{0}-\cos\theta\bar{g}(Y_{n+1},\nu))F^{ij}F_{;i}F_{;j}}{F^{3}}\\
        %&&-fF^{ij}(h^{2})_{ij}+f\sum\limits_{i=1}^{n}F^{ii}
        %+\bar{g}(\n F, T)+V_{0}F+\bar{g}(x, \n F)-\bar{g}(x,\nu)F^{2}\\
        &=&(V_{0}-\cos\theta\bar{g}(Y_{n+1},\nu)\frac{F^{ij}\n_{i}\n_{j}F}{F^{2}}
        +\bar{g} \left(x+\T+\frac{\cos\theta Y_{n+1}}{F}, \n F \right)
        +\frac{2}{F^{2}}F^{ij}\\        
        &&\cdot\n_{i}(V_{0}-\cos\theta\bar{g}(Y_{n+1},\nu))\n_{j}F
        -\frac{2}{F^{3}}\left(V_{0}-\cos\theta\bar{g}(Y_{n+1},\nu)\right)F^{ij}\n_{i}F\n_{j}F\\
        &&
        +(1-\mathcal{F})v+\frac{V_{0}}{F}(F^{2}-F^{ij}(h^{2})_{ij}),
    \end{eqnarray*}taking into account of \eqref{L operator}, then \eqref{evol F} follows. 
    Along $\partial\S_{t}$, from \eqref{mu to nu N}, \begin{eqnarray}\label{V0 bry}
    \n_{\mu}V_{0}=\bar{g}(x,\mu)=\cot\theta\bar{g}(x,\nu).
\end{eqnarray}
 Note that $\ov{N}=-e^{-u}E_{n+1}$, %direct calculations yield 
\begin{eqnarray}
%\begin{aligned}
    \bar{g}(Y_{n+1},\ov{N})&=&\frac{-4}{(1-|x|^{2})^{2}}\left(\frac{1}{2}(1+|x|^{2})\delta(E_{n+1}, e^{-u}E_{n+1})-\delta(x, E_{n+1})\delta(x, e^{-u}E_{n+1})\right)\notag \\
   & =&-\frac{1+|x|^{2}}{1-|x|^{2}}=-V_{0}. ~~~~\label{bry deri of Y}
 %   \end{aligned}
\end{eqnarray}
Combining with Proposition \ref{basic-capillary}, \eqref{Y one} and \eqref{bry deri of Y}  , we obtain
\begin{eqnarray*}
    \n_{\mu}\bar{g}(Y_{n+1},\nu)&=&h(\mu, \mu)\bar{g}(Y_{n+1}, \mu)+e^{-u}\big[\bar{g}(x,\mu)\bar{g}(\nu, E_{n+1})-\bar{g}(\mu, E_{n+1})\bar{g}(x, \nu)\big]\\
    &=&\cot\theta h(\mu,\mu)\bar{g}(Y_{n+1},\nu)+\frac{1}{\sin\theta}h(\mu,\mu)\bar{g}(Y_{n+1},\ov{N})-\bar{g}(x,\mu)\bar{g}(\nu, \ov{N})\\
    &&+\bar{g}(\mu,\ov{N})\bar{g}(x,\nu)\\
   % &=&\cot\theta h(\mu,\mu)\bar{g}(Y_{n+1},\nu)+\frac{1}{\sin\theta}h(\mu,\mu)\bar{g}(Y_{n+1},\bar{N})+\frac{1}{\sin\theta}\bar{g}(x,\nu)\\
    &=&\cot\theta h(\mu,\mu)\bar{g}(Y_{n+1},\nu)-\frac{V_{0}}{\sin\theta}h(\mu,\mu)+\frac{1}{\sin\theta}\bar{g}(x,\nu),
\end{eqnarray*}
together with \eqref{V0 bry}, it yields
\begin{eqnarray}\label{bry condi 2}
    \n_{\mu}\left(V_{0}-\cos\theta\bar{g}(Y_{n+1},\nu)\right)
    %&=&\cot\theta\bar{g}(x,\nu)-\cos\theta\left(\cot\theta h(\mu,\mu)\bar{g}(Y_{n+1},\nu)-\frac{V_{0}}{\sin\theta}h(\mu,\mu)+\frac{1}{\sin\theta}\bar{g}(x,\nu)\right)\\
    =\cot\theta h(\mu,\mu)\left(V_{0}-\cos\theta \bar{g}(Y_{n+1},\nu)\right).
\end{eqnarray}
From \eqref{boundary condition}, \eqref{v bry condi} and \eqref{bry condi 2}, we obtain the assertion \eqref{F boundary}, since %on $\partial\S_{t}$, 
\begin{eqnarray*}
    \n_{\mu}F=\n_{\mu}\left(\frac{V_{0}-\cos\theta\bar{g}(Y_{n+1},\nu)} {f+\bar{g}(x,\nu)}\right)=0.
\end{eqnarray*}
\end{proof}

Next, we  introduce the \textit{capillary support function} $\wt v$ for the capillary hypersurface $\S\subset \HH^{n+1}$. By decomposing the position vector $x$ with respect to the capillary outward normal $\wt \nu$ in \eqref{capillary normal nu} as 
\begin{eqnarray*}
    x= (V_0{\wt v} )\wt \nu+W,
\end{eqnarray*}for some tangential vector field $W$ of $\S$, and $\wt v$ is given by
%that will play an important role in deriving the lower bound of the curvature function $F$. Its definition is as follows
\begin{eqnarray}\label{wt v}
    \widetilde{v}:=\frac 1 {V_0} \frac{\bar g(x,\nu)}{ \bar{g}(\wt \nu,\nu)}=\frac{v}{V_{0}-\cos\theta\bar{g}(Y_{n+1},\nu)}.\end{eqnarray} %where the last equality used \eqref{capillary normal nu}. 
This function will play an important role in deriving the curvature estimates along flow \eqref{flow with cap-normal} or \eqref{flow with capillary} later. %We call $\widetilde{v}$ as the capillary support function of $\S\subset \ol{\H^+}$. Previously, we introduced a similar function for studying capillary hypersurfaces in the Euclidean half-space, see for instance the capillary support function as in \cite[Eq. 3.3]{MWW}. 
This function is a very natural analogous notion of the classical support function for capillary hypersurface $\S\subset \HH^{n+1}$ in the sense that $\wt v$ is identically constant, given by $\frac{2r_{0}}{1 + r_{0}^{2}\sin^{2}\theta}$, when $\S$ is the 
 geodesic spherical cap $\C_{\theta, r_{0}}$ in $\HH^{n+1}$, by using Proposition \ref{sup of model}. One can also refer to a similar concept called the relative support function, which was introduced by \cite[Section 3. Remark]{AM} in an anisotropic setting. %Besides, a similar function was also introduced and crucially used in the prescribed curvature problems by \cite[Eq. (1.6)]{MWW23}.  
 \begin{prop}\label{pro wide v}
 Along the flow \eqref{flow with capillary}, % the capillary support function 
 $\widetilde{v}$ in \eqref{wt v} satisfies
 \begin{eqnarray}
     \mathcal{L}\widetilde{v}&=&\frac{2}{F^{2}}F^{ij}\n_{i}\widetilde{v}\n_{j}(V_{0}-\cos\theta \bar{g}(Y_{n+1}, \nu))-\frac{2}{F}\widetilde{v}^{2}(V_{0}-\cos\theta\bar{g}(Y_{n+1}, \nu))+\frac{v}{F^{2}}\mathcal{F}\notag \\
   &&+\widetilde{v}\Bigg[\left(\frac{F^{ij}(h^{2})_{ij}}{F^{2}}-1\right)V_{0}
   %&&
   %-\frac{\cos\theta }{F^{2}}\bar{g}(Y_{n+1}, \nu)F^{ij}(h^{2})_{ij}+\cos\theta %\bar{g}(Y_{n+1}, \nu)F^{ij}(h^{2})_{ij}\notag \\
   +\frac{1}{V_{0}-\cos\theta\bar{g}(Y_{n+1}, \nu)} 
   \big(\bar{g}(x,x)-\cos\theta \bar{g}(\n_{i}Y_{n+1}, \nu)\bar{g}(x,e_{i})\big)\Bigg]\notag \\
   &&-\frac{1}{F(V_{0}-\cos\theta\bar{g}(Y_{n+1}, \nu))}\left[\bar{g}(x,\n V_{0})-\cos\theta\bar{g}(\n_{i}Y_{n+1}, \nu)\bar{g}(x, e_{i})\right],  \label{wide v evo}
    \end{eqnarray}
and 
\begin{eqnarray}\label{widetilde v boundary}
    \n_{\mu}\widetilde{v}=0,\quad \text{on}\quad \partial\Sigma_{t}. 
\end{eqnarray}
\end{prop}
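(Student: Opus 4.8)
The plan is to derive \eqref{wide v evo} and \eqref{widetilde v boundary} directly from the evolution equations already obtained for $v=\bar g(x,\nu)$ and for the quantity $Q:=V_0-\cos\theta\,\bar g(Y_{n+1},\nu)$, using the product/quotient rule for the operator $\mathcal L$ defined in \eqref{L operator}. Since $\widetilde v=v/Q$, the first step is to record the Leibniz-type identity
\[
\mathcal L\!\left(\frac vQ\right)=\frac{\mathcal L v}{Q}-\frac{v}{Q^2}\,\mathcal L Q+\frac{2}{Q}\cdot\frac{V_0-\cos\theta\,\bar g(Y_{n+1},\nu)}{F^2}F^{ij}\n_i\!\left(\frac vQ\right)\n_j Q,
\]
which follows because $\mathcal L$ is a linear second-order operator whose principal part is $-\tfrac{Q}{F^2}F^{ij}\n_i\n_j$ and whose first-order part is a derivation; the genuinely second-order cross term $-2\tfrac{Q}{F^2}F^{ij}\n_i(v/Q)\n_j\log Q$ is what produces the gradient term $\tfrac{2}{F^2}F^{ij}\n_i\widetilde v\,\n_j Q$ in \eqref{wide v evo}. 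So the computation reduces to assembling $\mathcal L v$, which is \eqref{evol of v}, and $\mathcal L Q$.

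Next I would compute $\mathcal L Q$. For this one needs $\partial_t\bar g(Y_{n+1},\nu)$, which comes from Proposition \ref{basic evolution eq} (3) together with \eqref{Y one} for $\n_i Y_{n+1}$; the Hessian term $F^{ij}\n_i\n_j\bar g(Y_{n+1},\nu)$ is supplied by \eqref{Y second}; and for the $V_0$ part one uses $\partial_t V_0=\bar g(x,\n V_0)$ together with \eqref{V0 teo deri}. The structure is completely parallel to the derivation of \eqref{evol of v} and should yield
\[
\mathcal L Q=\Bigl(\tfrac{F^{ij}(h^2)_{ij}}{F^2}-1\Bigr)V_0\,Q-\Bigl(\bar g(x,x)-\cos\theta\,\bar g(\n_i Y_{n+1},\nu)\bar g(x,e_i)\Bigr)+\tfrac{1}{F}\Bigl(\bar g(x,\n V_0)-\cos\theta\,\bar g(\n_i Y_{n+1},\nu)\bar g(x,e_i)\Bigr),
\]
where the inhomogeneous (curvature-free) terms arise from $\n_i\n_j V_0$ and $\n_i\n_j\bar g(Y_{n+1},\nu)$ contributing $-\tfrac{Q}{F^2}F^{ij}g_{ij}\cdot(\cdots)$ after using $F^{ij}g_{ij}=\mathcal F$ and the identities in \eqref{formula 1}. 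Substituting $\mathcal L v$ and $\mathcal L Q$ into the quotient identity, and simplifying using $v=Q\widetilde v$ and $\n v=Q\n\widetilde v+\widetilde v\,\n Q$, collapses the $V_0$-curvature terms and the $\bar g(x,\n V_0)$ terms into exactly the right-hand side of \eqref{wide v evo}; the term $\tfrac{v}{F^2}\mathcal F$ appears because the $\mathcal F=F^{ij}g_{ij}$ coefficients from the Hessians of $V_0$ and of $\bar g(Y_{n+1},\nu)$ in $\mathcal L v$ and $\mathcal L Q$ do not fully cancel.

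For the boundary condition \eqref{widetilde v boundary}, the quotient rule gives $\n_\mu\widetilde v=\tfrac{1}{Q}\n_\mu v-\tfrac{v}{Q^2}\n_\mu Q$; by \eqref{v bry condi} we have $\n_\mu v=\cot\theta\,h(\mu,\mu)v$, and by \eqref{bry condi 2} we have $\n_\mu Q=\cot\theta\,h(\mu,\mu)Q$, so the two terms cancel identically and $\n_\mu\widetilde v=0$ on $\partial\Sigma_t$.

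I expect the main obstacle to be purely bookkeeping: correctly tracking the numerous first-order terms (the $\bar g(x,\n F)$, $\bar g(Y_{n+1},\n v)$, $\bar g(\T,\n v)$ terms) so that they reorganize cleanly into $\mathcal L\widetilde v$ via the operator \eqref{L operator}, and verifying that the gradient term in \eqref{wide v evo} has coefficient exactly $\tfrac{2}{F^2}F^{ij}\n_i\widetilde v\,\n_j Q$ rather than, say, with an extra $Q/F^2$ factor — this requires care about how $\mathcal L$ distributes over the quotient. There is no conceptual difficulty; the identities \eqref{Y one}, \eqref{Y second}, \eqref{V0 teo deri}, \eqref{u two deri}, \eqref{formula 1} and Proposition \ref{basic-capillary} are exactly the inputs needed, and the result is a reorganization of \eqref{evol of v} together with the analogous computation for $Q$.
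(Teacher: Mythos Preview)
Your plan is exactly the paper's approach: compute $\mathcal L Q$ for $Q:=V_0-\cos\theta\,\bar g(Y_{n+1},\nu)$, then combine with \eqref{evol of v} via the quotient identity $\mathcal L(v/Q)=Q^{-1}\mathcal Lv-vQ^{-2}\mathcal LQ+\tfrac{2}{F^2}F^{ij}\n_i\widetilde v\,\n_jQ$, and the boundary argument via \eqref{v bry condi} and \eqref{bry condi 2} is correct as stated. However, the intermediate formula you display for $\mathcal LQ$ is wrong and would not yield \eqref{wide v evo}. Two specific errors: first, $\partial_t V_0\neq\bar g(x,\n V_0)$; rather $\partial_tV_0=\bar g(x,f\nu+\T)=fv+\bar g(\n V_0,\T)$ with $f=Q/F-v$, and this is what contributes the $+2Qv/F$ term to $\mathcal LQ$ (and hence the $-\tfrac{2}{F}\widetilde v^{\,2}Q$ term in \eqref{wide v evo}). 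Second, your formula is missing the term $-Q^2\mathcal F/F^2$, which arises from $-\tfrac{Q}{F^2}F^{ij}\n_i\n_jV_0$ together with the analogous $\bar g(Y_{n+1},\nu)$ contribution via \eqref{V0 teo deri} and \eqref{Y second}; after the quotient rule this is precisely what produces $+v\mathcal F/F^2$ in \eqref{wide v evo}. The correct expression is
\[
\mathcal LQ=-\frac{Q^2}{F^2}\mathcal F+\frac{2Qv}{F}-\bar g(x,x)+\cos\theta\,\bar g(\n_iY_{n+1},\nu)\bar g(x,e_i)-\cos\theta\,Q\,\bar g(Y_{n+1},\nu)\frac{F^{ij}(h^2)_{ij}}{F^2},
\]
and with this the substitution goes through.
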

\begin{proof}
By direct calculations, 
    \begin{eqnarray*}
        \partial_{t}V_{0}=\bar{g}(D V_{0}, f\nu+\T)=f v+\bar{g}(\nabla V_{0}, \T),
    \end{eqnarray*}
from \eqref{V0 teo deri}, we have
    \begin{eqnarray*}
        F^{ij}\n_{i}\n_{j}V_{0}=V_{0}\mathcal{F}-vF,
    \end{eqnarray*}
    then 
    \begin{eqnarray}
        \L V_0&= &-\frac{V_0-\cos\theta \bar g(Y_{n+1},\nu)}{F^2} V_0\mathcal F + \frac{2 (V_0-\cos\theta \bar g(Y_{n+1},\nu))}{F}v-v^2\notag \\&& -\bar g \left(x+\frac{\cos\theta}{F} Y_{n+1}, \n V_0 \right). \label{V0 evo equ}
    \end{eqnarray} %Hence    \begin{eqnarray}    \begin{aligned}\label{V0 evo equ}    &\partial_{t}V_{0}-F^{-2}\left(V_{0}-\cos\theta\bar{g}(Y_{n+1},\nu)\right)F^{ij}\n_{i}\n_{j}V_{0}\\    =&\bar{g}(\T,\n V_{0})-F^{-2}(V_{0}-\cos\theta\bar{g}(Y_{n+1},\nu))V_{0}\mathcal{F}+2F^{-1}V_{0}v    -2\cos\theta v F^{-1}\bar{g}(Y_{n+1},\nu)-v^{2}.~~~    \end{aligned}    \end{eqnarray}
Recall that $Y_{n+1}$ is a Killing vector field  (see e.g. \cite[Proposition~4.1]{WX2019}), it follows
\begin{eqnarray*}
    \bar g(D_\nu Y_{n+1},\nu)=0,
\end{eqnarray*}
together with Proposition \ref{basic evolution eq} (3) and 
\eqref{Y one}, we derive
    \begin{eqnarray*}
      \partial_{t}\bar{g}(Y_{n+1},\nu)&=&\bar{g}(D_{f\nu+\T} (Y_{n+1}),\nu)+\bar{g}(Y_{n+1},D_{\partial_{t}} \nu)\\
      &=&\bar{g}(D_{\T}Y_{n+1},\nu)-\bar{g}(Y_{n+1},\n f)+h(e_{i}, \T)\bar{g}(Y_{n+1}, e_{i})\\
      &=&\bar{g}\left(\T,\n (\bar{g}(Y_{n+1},\nu))\right)-\frac{1}{F}\bar{g}(\n (V_{0}-\cos\theta \bar{g}(Y_{n+1},\nu)), Y_{n+1})\\
      %-\cos\theta\bar{g}(\n\bar{g}(Y_{n+1},\nu), Y_{n+1})\right)\\
      &&+\frac{1}{F^{2}}\left(V_{0}-\cos\theta \bar{g}(Y_{n+1},\nu)\right)\bar{g}(Y_{n+1},\n F)+\bar{g}\left(x, \n \bar{g}(Y_{n+1},\nu)\right)\\
      &&-\bar{g}(\n_{i}Y_{n+1},\nu)\bar{g}(x, e_{i}),
    \end{eqnarray*}
and from  \eqref{Y second},
    \begin{eqnarray*}
        F^{ij}\n_{i}\n_{j}(\bar{g}(Y_{n+1},\nu))&=&\bar{g}(Y_{n+1},\n F)-\bar{g}(Y_{n+1},\nu)F^{ij}(h^{2})_{ij}+\bar{g}(Y_{n+1},\nu)\mathcal{F},
    \end{eqnarray*}
then it follows 
    \begin{eqnarray}\label{Yn1 evo equ}
    \mathcal{L}(\bar{g}(Y_{n+1},\nu)) &=&
    \frac{V_{0}-\cos\theta\bar{g}(Y_{n+1},\nu)}{F^{2}}
\bar{g}(Y_{n+1}, \nu)\left(F^{ij}(h^{2})_{ij}-\mathcal{F}\right)\notag \\
&&-\frac{\bar{g}(\n V_{0},Y_{n+1})}{F}
    -\bar{g}(\n_{i}Y_{n+1},\nu)\bar{g}(x,e_{i}).
         \end{eqnarray}
         Combining \eqref{V0 evo equ} and \eqref{Yn1 evo equ}, we get
         \begin{eqnarray*}
            && \mathcal{L}(V_{0}-\cos\theta\bar{g}(Y_{n+1},\nu))=\L V_0-\cos\theta \L \bar g(Y_{n+1},\nu) \\
            &=& -\frac{(V_{0}-\cos\theta \bar{g}(Y_{n+1},\nu))^{2}}{F^{2}}\mathcal{F}-\bar{g}(x, x)+\frac{2\left(V_{0}-\cos\theta\bar{g}(Y_{n+1}, \nu)\right)v}{F}\\
            &&+\cos\theta\bar{g}(\n_{i}Y_{n+1}, \nu)\bar{g}(x,e_{i})-\cos\theta(V_{0}-\cos\theta\bar{g}(Y_{n+1},\nu))\bar{g}(Y_{n+1},\nu)\frac{F^{ij}(h^{2})_{ij}}{F^{2}},
         \end{eqnarray*}
together with  \eqref{evol of v}, it implies
\begin{eqnarray*}
    \mathcal{L}\widetilde{v}&=&\frac{1}{V_{0}-\cos\theta\bar{g}(Y_{n+1}, \nu)}\mathcal{L}v-\frac{v}{\left(V_{0}-\cos\theta\bar{g}(Y_{n+1},\nu)\right)^{2}}\mathcal{L}\left(V_{0}-\cos\theta\bar{g}(Y_{n+1},\nu)\right)\\
    &&+\frac{2}{F^{2}}F^{ij}\n_{i}\widetilde{v}\n_{j}(V_{0}-\cos\theta\bar{g}(Y_{n+1},\nu))\\
    %&=&(\frac{F^{ij}(h^{2})_{ij}}{F^{2}}-1)V_{0}\widetilde{v}-\frac{\cos\theta}{F^{2}} \widetilde{v} \bar{g}(Y_{n+1}, \nu)F^{ij}(h^{2})_{ij}-\frac{1}{F(V_{0}-\cos\theta\bar{g}(Y_{n+1}, \nu))}\\ &&\cdot\Big(\bar{g}(x, \n V_{0})
 %   -\cos\theta \bar{g}(\n_{i}Y_{n+1}, \nu)\bar{g}(x,e_{i})\Big)
  % +\frac{1}{F^{2}}v\mathcal{F}+\frac{\widetilde{v}}{V_{0}-\cos\theta \bar{g}(Y_{n+1}, \nu)}\bar{g}(x,x)\\
  % &&-\frac{2}{F}\widetilde{v}^{2}(V_{0}-\cos\theta\bar{g}(Y_{n+1}, \nu))-\frac{\cos\theta\widetilde{v} }{V_{0}-\cos\theta\bar{g}(Y_{n+1}, \nu)}\bar{g}(\n_{i}Y_{n+1}, \nu)\bar{g}(x, e_{i})\\
  % &&+\cos\theta \widetilde{v}\bar{g}(Y_{n+1}, \nu)F^{ij}(h^{2})_{ij}-\frac{2}{F^{2}}F^{ij}\n_{i}\widetilde{v}\n_{j}(V_{0}-\cos\theta\bar{g}(Y_{n+1}, \nu))\\
   &=&\frac{2}{F^{2}}F^{ij}\n_{i}\widetilde{v}\n_{j}(V_{0}-\cos\theta \bar{g}(Y_{n+1}, \nu))-\frac{2}{F}\widetilde{v}^{2}(V_{0}-\cos\theta\bar{g}(Y_{n+1}, \nu))+\frac{1}{F^{2}}v\mathcal{F}\\
   &&+\widetilde{v}\left[ \left(\frac{F^{ij}(h^{2})_{ij}}{F^{2}}-1\right)V_{0}+\frac{1}{(V_{0}-\cos\theta\bar{g}(Y_{n+1}, \nu))} 
   \big(\bar{g}(x,x)-\cos\theta \bar{g}(\n_{i}Y_{n+1}, \nu)\bar{g}(x,e_{i})\big)\right]\\ 
   &&-\frac{1}{F(V_{0}-\cos\theta\bar{g}(Y_{n+1}, \nu))}\big[\bar{g}(x,\n V_{0})-\cos\theta\bar{g}(\n_{i}Y_{n+1}, \nu)\bar{g}(x, e_{i})\big]. 
    \end{eqnarray*}
       % \begin{eqnarray*}
   %&=&(F^{-1}F^{ij}(h^{2})_{ij}-1)V_{0}\widetilde{v}-\left(V_{0}-\cos\theta\bar{g}(Y_{n+1},\nu)\right)^{-1}F^{-1}\Big(\bar{g}(\n V_{0}, \n V_{0})+\cos\theta\bar{g}(\n_{i}Y_{n+1},\nu)\bar{g}(x,e_{i})\Big)\\
   % &&+\widetilde{v}\left(V_{0}-\cos\theta\bar{g}(Y_{n+1},\nu)\right)^{-1}\bar{g}(x,x)-2F^{-1}V_{0}\widetilde{v}^{2}+v F^{-2}\mathcal{F}
    %\\%-2\bar{g}^{2}(x,\nu)F^{-1}\left(V_{0}-\cos\theta\bar{g}(Y_{n+1},\nu)\right)^{-2}\\
    %&&+2\cos\theta F^{-1}\bar{g}(x,\nu)\left(V_{0}-\cos\theta\bar{g}(Y_{n+1},\nu)\right)^{-2}-\cos\theta \bar{g}(x,\nu)(V_{0}-\cos\theta\bar{g}(Y_{n+1},\nu))^{-2}\\
    %&&\cdot\bar{g}(\n_{e_{i}}Y_{n+1},\nu)\bar{g}(x, e_{i}).
    %\end{eqnarray*}
    Along $\partial\Sigma_{t}$, \eqref{widetilde v boundary} follows directly from \eqref{v bry condi} and \eqref{bry condi 2}. Hence we proved the assertions. % implies    \begin{eqnarray*}        \n_{\mu}\widetilde{v}=0.    \end{eqnarray*}
\end{proof}

Next, we calculate the evolution equation for the function  $$P:=\widetilde{v}F.$$
\begin{prop}\label{wvF}
    Along the flow \eqref{flow with capillary}, there hold % the function $\widetilde{v}F$ satisfies 
    \begin{eqnarray}
        \mathcal{L} P  &=&\frac{2}{F^{2}}F^{ij}\n_{i}\left(V_{0}-\cos\theta\bar{g}(Y_{n+1},\nu)\right)\n_{j} P-\frac{2\left(V_{0}-\cos\theta\bar{g}(Y_{n+1},\nu)\right)}{F^3}{ F^{ij}\n_{i}F\n_{j}P}  \notag \\
        &&+ \frac{P}{V_{0}-\cos\theta\bar{g}(Y_{n+1}, \nu)}  \Big[ \bar{g}(x,x)-\cos\theta\bar{g}(\n_{i}Y_{n+1}, \nu)\bar{g}(x,e_{i})
             \Big]+v\mathcal{F} \left(\frac{1}{F}-\widetilde{v} \right)\notag\\&&-v\widetilde{v} 
             -\frac{1}{V_{0}-\cos\theta\bar{g}(Y_{n+1}, \nu)}\big[\bar{g}(x, \n V_{0})
             -\cos\theta\bar{g}(\n_{i}Y_{n+1}, \nu)\bar{g}(x,e_{i})\big], \label{P evo}
             %&&-2\widetilde{v}^{2}(V_{0}-\cos\theta\bar{g}(Y_{n+1}, \nu))~~~~~
        \end{eqnarray}
    and 
    \begin{eqnarray}\label{vF bry condi}
        \n_{\mu} P=0,\quad {\rm{on}}\quad \partial\Sigma_{t}.
    \end{eqnarray}
\end{prop}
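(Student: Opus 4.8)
The plan is to compute $\mathcal{L}P$ by combining the evolution equations for the two factors $\widetilde v$ and $F$ via the product rule for the operator $\mathcal{L}$. Since $\mathcal{L}$ contains a second-order term $-\tfrac{V_0-\cos\theta\bar g(Y_{n+1},\nu)}{F^2}F^{ij}\n_i\n_j$, the product rule produces a cross term: for functions $a,b$ one has $\mathcal{L}(ab)=a\,\mathcal{L}b+b\,\mathcal{L}a - ab\,\partial_t(1) - 2\tfrac{V_0-\cos\theta\bar g(Y_{n+1},\nu)}{F^2}F^{ij}\n_i a\,\n_j b + (\text{first-order cross terms cancel with the structure of }\mathcal{L})$. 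More precisely, writing $\Phi:=V_0-\cos\theta\bar g(Y_{n+1},\nu)$ for brevity, we have $\mathcal{L}(\widetilde v F)=\widetilde v\,\mathcal{L}F+F\,\mathcal{L}\widetilde v+\tfrac{2\Phi}{F^2}F^{ij}\n_i\widetilde v\,\n_j F$. So the first step is to substitute \eqref{evol F} and \eqref{wide v evo} into this identity.

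The second step is the bookkeeping: multiply \eqref{wide v evo} by $F$, multiply \eqref{evol F} by $\widetilde v$, add them together with the cross term, and simplify using $P=\widetilde v F$, $v=\widetilde v\,\Phi$, and the two algebraic identities \eqref{formula 1}, namely $\mathcal F-1=\mathcal F - F^{ij}h_{ij}/F\ge 0$ and $F^{ij}(h^2)_{ij}=F^2$. The last identity is the crucial simplification: it forces the terms $\bigl(\tfrac{F^{ij}(h^2)_{ij}}{F^2}-1\bigr)V_0 F$ coming from $\widetilde v\cdot\eqref{evol F}$ and $\bigl(\tfrac{F^{ij}(h^2)_{ij}}{F^2}-1\bigr)V_0\widetilde v\cdot F$ coming from $F\cdot\eqref{wide v evo}$ to vanish, which is why $\mathcal{L}P$ ends up with no $V_0$-curvature term. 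One should also track the first-order gradient terms carefully: $F\cdot\tfrac{2}{F^2}F^{ij}\n_i\widetilde v\,\n_j\Phi$ combines with $\tfrac{2\Phi}{F^2}F^{ij}\n_i\widetilde v\,\n_jF$; using $\n_j P=\widetilde v\,\n_j F+F\,\n_j\widetilde v$ one rewrites these plus the $\widetilde v\cdot(\text{gradient terms of }\eqref{evol F})$ into the two gradient terms $\tfrac{2}{F^2}F^{ij}\n_i\Phi\,\n_j P$ and $-\tfrac{2\Phi}{F^3}F^{ij}\n_iF\,\n_jP$ appearing in \eqref{P evo}. The remaining zeroth-order terms — the $\tfrac{v}{F^2}\mathcal F$ from $F\cdot\eqref{wide v evo}$ becomes $\tfrac{v\mathcal F}{F}$, the $(1-\mathcal F)v\cdot\widetilde v$ from $\widetilde v\cdot\eqref{evol F}$, the $-\tfrac{2}{F}\widetilde v^2\Phi\cdot F=-2\widetilde v v$ term and the $-\tfrac{2\Phi}{F^3}F^{ij}\n_iF\n_jF\cdot\widetilde v$ which is absorbed once we expand $\n_jP$ — collect into $v\mathcal F(\tfrac1F-\widetilde v)-v\widetilde v$ together with the two $\tfrac{1}{\Phi}[\cdots]$ bracket terms carried over verbatim from \eqref{wide v evo}. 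This yields exactly \eqref{P evo}.

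For the boundary condition \eqref{vF bry condi}, the argument is immediate: since $P=\widetilde v F$, the Leibniz rule gives $\n_\mu P=(\n_\mu\widetilde v)F+\widetilde v(\n_\mu F)$, and both $\n_\mu\widetilde v=0$ by \eqref{widetilde v boundary} and $\n_\mu F=0$ by \eqref{F boundary} hold on $\partial\Sigma_t$, hence $\n_\mu P=0$ there.

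The main obstacle is purely the algebraic reorganization in the second step: there are many terms of similar shape (products of $\Phi$, $v$, $\widetilde v$, $F$, $\mathcal F$, $F^{ij}(h^2)_{ij}$) and one must be disciplined in applying $v=\widetilde v\Phi$ and $F^{ij}(h^2)_{ij}=F^2$ at the right moments so that the $V_0$-curvature terms cancel and the gradient terms regroup into $\n_j P$. There is no conceptual difficulty and no new geometric input is needed beyond Proposition~\ref{pro F evo}, Proposition~\ref{pro wide v}, and \eqref{formula 1}; the risk is only arithmetic slips, so I would double-check by specializing to the static solution $\mathcal C_{\theta,r_0}$ (where $\widetilde v$ and $F$ are constant, $\n F=\n\widetilde v=0$, and one can verify both sides reduce consistently).
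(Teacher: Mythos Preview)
Your approach is correct and essentially identical to the paper's: the paper likewise writes $\mathcal{L}P=F\,\mathcal{L}\widetilde v+\widetilde v\,\mathcal{L}F-\tfrac{2\Phi}{F^{2}}F^{ij}\n_i\widetilde v\,\n_jF$, substitutes \eqref{evol F} and \eqref{wide v evo}, and uses \eqref{formula 1} to simplify, while the boundary claim follows from \eqref{F boundary} and \eqref{widetilde v boundary} exactly as you say. Note one sign slip: in your displayed product rule for $\mathcal{L}(\widetilde vF)$ the cross term should carry a \emph{minus} sign (as in your general formula one line above and as in the paper); with the correct sign your regrouping of the gradient terms into $\n_jP$ goes through.
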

\begin{proof}
Direct calculations yield 
\begin{eqnarray*}
\mathcal{L}P=F\mathcal{L} \widetilde{v}+\widetilde{v}\mathcal{L} F-\frac{2(V_{0}-\cos\theta \bar{g}(Y_{n+1}, \nu))}{F^{2}}F^{ij}\n_{i}\widetilde{v}\n_{j}F,
\end{eqnarray*} by substituting \eqref{evol F} and \eqref{wide v evo} into above equation, we derive \eqref{P evo}. And \eqref{vF bry condi} follows easily from \eqref{F boundary} and \eqref{widetilde v boundary}. 
\end{proof}

In order to obtain the uniform bound of principal curvatures, we still need to derive the evolution equation for the mean curvature $H$.% along flow \eqref{flow with capillary}.
 \begin{prop}\label{pro H evo}
         Along the flow \eqref{flow with capillary}, $H$ satisfies %the mean curvature $H$ evolves as follows 
         \begin{eqnarray}
             \mathcal{L}H&=&\frac{V_{0}-\cos\theta\bar{g}(Y_{n+1},\nu)}{F^2}{F^{kl,pq}\n_{i}h_{kl}\n_{i}h_{pq}}-\frac{2\left(V_{0}-\cos\theta\bar{g}(Y_{n+1},\nu)\right)}{F^{3}}|\n F|^{2}\notag\\
            &&+\frac{2}{F^{2}}\n_{i}\left(V_{0}-\cos\theta\bar{g}(Y_{n+1},\nu)\right)\n_{i}F-\frac{ \left(2V_{0}-\cos\theta\bar{g}(Y_{n+1},\nu)\right)}{F}|h|^{2}\notag \\
             &&+H\left[\frac{V_{0}-\cos\theta\bar{g}(Y_{n+1}, \nu)}{F^2} {F^{kl}(h^{2})_{kl}}+\frac{V_{0}-\cos\theta\bar{g} (Y_{n+1}, \nu )}{F^{2}}\mathcal{F}+\frac{v}{F}+V_0\right]\notag\\
                 &&-\frac{n\left(V_{0}-\cos\theta\bar{g}(Y_{n+1},\nu)\right)}{F} -nv. \label{H-equ}
         \end{eqnarray}
    If $\theta\in(0, \frac \pi 2]$, then
         \begin{eqnarray}\label{bdry of H}
             \n_{\mu}H\leq 0\quad {\rm{on}}\quad \partial\Sigma_{t}.
         \end{eqnarray}
     \end{prop}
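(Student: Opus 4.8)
For the evolution equation \eqref{H-equ}, the plan is to differentiate $H=g^{ij}h_{ij}$ and insert the already-established evolution equations \eqref{evo metric} and \eqref{evo hij}. Abbreviating $A:=V_{0}-\cos\theta\,\bar g(Y_{n+1},\nu)$, one has $\partial_{t}H=-g^{ik}g^{jl}(\partial_{t}g_{kl})h_{ij}+g^{ij}\partial_{t}h_{ij}$. By \eqref{evo metric} the first term equals $-2f|h|^{2}-2h^{ij}\n_{i}\T_{j}$, and the $g^{ij}$-trace of the last line of \eqref{evo hij} contributes $+2h^{ij}\n_{i}\T_{j}$, so all $\T$-terms cancel; likewise, the traced term $\bar g(x+\T+\tfrac{\cos\theta}{F}Y_{n+1},\n H)$ cancels against the drift term in the definition \eqref{L operator} of $\mathcal L$, and $\tfrac{A}{F^{2}}F^{kl}\n_{k}\n_{l}H$ cancels against $-\tfrac{A}{F^{2}}F^{ij}\n_{i}\n_{j}H$. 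For the rest one uses $g^{ij}F^{kl}\n_{k}\n_{l}h_{ij}=F^{kl}\n_{k}\n_{l}H$, $g^{ij}F^{kl,pq}\n_{i}h_{kl}\n_{j}h_{pq}=F^{kl,pq}\n_{i}h_{kl}\n_{i}h_{pq}$, and $F^{kl}g_{kl}=\mathcal F$. Collecting the $|h|^{2}$ terms, $-(2\bar g(x,\nu)+\tfrac{\cos\theta\bar g(Y_{n+1},\nu)}{F})|h|^{2}$ from \eqref{evo hij} and $-2f|h|^{2}$ from $\partial_{t}g^{ij}$ combine, via $2A+\cos\theta\bar g(Y_{n+1},\nu)=2V_{0}-\cos\theta\bar g(Y_{n+1},\nu)$, into $-\tfrac{2V_{0}-\cos\theta\bar g(Y_{n+1},\nu)}{F}|h|^{2}$; rearranging the remaining curvature terms then gives exactly \eqref{H-equ}.

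For the boundary inequality \eqref{bdry of H}, fix $p\in\partial\Sigma_{t}$ and choose an orthonormal frame $\{e_{1}=\mu,e_{2},\dots,e_{n}\}$ of $T\Sigma_{t}$ at $p$ with $\{e_{\alpha}\}_{\alpha=2}^{n}$ diagonalizing $h|_{T\partial\Sigma_{t}}$, say $h_{\alpha\beta}=\kappa_{\alpha}\delta_{\alpha\beta}$. By Proposition \ref{basic-capillary}(1) we have $h_{\mu\alpha}=0$, so $(h_{ij})$ is diagonal at $p$; consequently $F^{ij}=F^{ii}\delta_{ij}$ there, with $F^{\mu\mu}>0$ and $F^{\alpha\alpha}>0$ by strict convexity. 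Then $\n_{\mu}H=\n_{\mu}h_{\mu\mu}+\sum_{\alpha=2}^{n}\n_{\mu}h_{\alpha\alpha}$, and Proposition \ref{basic-capillary}(3)--(4) give $\n_{\mu}h_{\alpha\alpha}=\widetilde h_{\alpha\alpha}(h_{\mu\mu}-\kappa_{\alpha})=\cot\theta\,\kappa_{\alpha}(h_{\mu\mu}-\kappa_{\alpha})$. The term $\n_{\mu}h_{\mu\mu}$ is then pinned down by the already-proved identity \eqref{F boundary}, $\n_{\mu}F=0$: in this frame it reads $0=\sum_{i}F^{ii}\n_{\mu}h_{ii}=F^{\mu\mu}\n_{\mu}h_{\mu\mu}+\cot\theta\sum_{\alpha}F^{\alpha\alpha}\kappa_{\alpha}(h_{\mu\mu}-\kappa_{\alpha})$, whence $\n_{\mu}h_{\mu\mu}=-\tfrac{\cot\theta}{F^{\mu\mu}}\sum_{\alpha}F^{\alpha\alpha}\kappa_{\alpha}(h_{\mu\mu}-\kappa_{\alpha})$.

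Adding the two pieces,
\[
\n_{\mu}H=\cot\theta\sum_{\alpha=2}^{n}\kappa_{\alpha}(h_{\mu\mu}-\kappa_{\alpha})\Big(1-\frac{F^{\alpha\alpha}}{F^{\mu\mu}}\Big).
\]
Since $\theta\in(0,\tfrac{\pi}{2}]$ we have $\cot\theta\ge0$, and strict convexity gives $\kappa_{\alpha}>0$. By the monotonicity in Lemma \ref{pro2.3}(3) (the partial derivative of $F$ in the direction of a larger principal curvature is no larger), $h_{\mu\mu}\ge\kappa_{\alpha}$ implies $F^{\mu\mu}\le F^{\alpha\alpha}$, i.e. $1-F^{\alpha\alpha}/F^{\mu\mu}\le0$, while $h_{\mu\mu}\le\kappa_{\alpha}$ implies $1-F^{\alpha\alpha}/F^{\mu\mu}\ge0$; in both cases $(h_{\mu\mu}-\kappa_{\alpha})(1-F^{\alpha\alpha}/F^{\mu\mu})\le0$. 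Hence every summand is $\le0$, so $\n_{\mu}H\le0$ on $\partial\Sigma_{t}$, and this is the only point at which $\theta\le\tfrac{\pi}{2}$ enters.

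The main obstacle is not conceptual but computational: deriving \eqref{H-equ} is a lengthy bookkeeping task — one must verify that the Simons-type gradient terms and the zeroth-order curvature terms assemble precisely as stated and that all tangential terms cancel against the drift of $\mathcal L$. For \eqref{bdry of H} the key observation is simply that \eqref{F boundary} supplies exactly the missing relation for $\n_{\mu}h_{\mu\mu}$, after which the sign follows from the ordering of the derivatives $F^{ii}$ in Lemma \ref{pro2.3}(3); the ``hard part'' there is only careful sign-and-index tracking together with the convexity and angle hypotheses.
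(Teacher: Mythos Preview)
Your proposal is correct and follows essentially the same approach as the paper. For the boundary inequality \eqref{bdry of H} your argument is identical to the paper's (same frame, same use of \eqref{F boundary} to eliminate $\n_{\mu}h_{\mu\mu}$, same appeal to Lemma \ref{pro2.3}(3)); for the evolution equation \eqref{H-equ} the paper instead starts from Proposition \ref{basic evolution eq}(6), expands $\Delta f$ via \eqref{Y second}, \eqref{V0 teo deri}, \eqref{u two deri}, and then applies Simons' identity to $\Delta F$, whereas you trace the already-refined evolution \eqref{evo hij} --- but since \eqref{evo hij} was itself obtained from exactly those ingredients, the two routes are equivalent and your shortcut is legitimate.
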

     \begin{proof}
         From Proposition \ref{basic evolution eq} (6), together with \eqref{Y second}, \eqref{V0 teo deri} and \eqref{u two deri}, we compute
         \begin{eqnarray*}
             \partial_{t}H&=&-\Delta f -|h|^{2}f+nf+\bar{g}(\T,\n H)\\
             &=&\frac{1}{F}\left(Hv-nV_{0}\right)+\frac{\cos\theta}{F}\left(\bar{g}(Y_{n+1},\n H)+n\bar{g}(Y_{n+1},\nu)-|h|^{2}\bar{g}(Y_{n+1},\nu) \right)\\
             &&+\frac{2}{F^{2}}\n_{i}\left(V_{0}-\cos\theta\bar{g}(Y_{n+1},\nu)\right)\n_{i}F-\left(V_{0}-\cos\theta\bar{g}(Y_{n+1},\nu)\right)\frac{|\n F|^{2}}{F^{3}}
             \\
             &&+\left(V_{0}-\cos\theta\bar{g}(Y_{n+1},\nu)\right)\frac{\Delta F}{F^{2}}
             +HV_{0}+\bar{g}(x,\n H)
             -\bar{g}(x,\nu)|h|^{2}\\
             &&-(|h|^{2}-n)\left(\frac{V_{0}-\cos\theta\bar{g}(x,\nu)}{F}-v\right)+\bar{g}(\T,\n H)\\
             &=&\left(V_{0}-\cos\theta\bar{g}(Y_{n+1},\nu)\right)\frac{\Delta F}{F^{2}}+\bar{g}\left(x+\T+\frac{\cos\theta Y_{n+1}}{F}, \n H\right)\\
             &&+\frac{2}{F^{2}}\n_{i}\left(V_{0}-\cos\theta\bar{g}(Y_{n+1},\nu)\right)\n_{i}F
            - \frac{2\left(V_{0}-\cos\theta\bar{g}(Y_{n+1},\nu)\right) }{F^{3}} |\n F|^{2}\\
            &&+\frac{v}{F}H+HV_{0}-\frac{1}{F}V_{0}|h|^{2}-nv.
             \end{eqnarray*}
By  Simons' type identity (see e.g. \cite[Eq. (2-7)]{A94}), 
             \begin{eqnarray*}
                 \Delta F&=&F^{kl,pq}\n_{i}h_{kl}\n_{i}h_{pq}+F^{kl}\n_{i}\n_{i}h_{kl}\\
                 &=&F^{kl,pq}\n_{i}h_{kl}\n_{i}h_{pq}+F^{kl}\n_{k}\n_{l}H+H F^{kl}(h^{2})_{kl}-F|h|^{2}-nF+H\sum\limits_{i=1}^{n}F^{ii},
             \end{eqnarray*}
combining the above, we get
             \begin{eqnarray*}
                 \partial_{t}H&=&\left(V_{0}-\cos\theta\bar{g}(Y_{n+1},\nu)\right)\frac{F^{kl}\n_{k}\n_{l}H}{F^{2}}+\bar{g}(x+\T+\frac{\cos\theta Y_{n+1}}{F},\n H)\\
                 &&+\left(V_{0}-\cos\theta\bar{g}(Y_{n+1},\nu)\right)\frac{F^{kl,pq}\n_{i}h_{k}\n_{i}h_{pq}}{F^{2}}+\frac{2}{F^{2}}\n_{i}\left(V_{0}-\cos\theta\bar{g}(Y_{n+1},\nu)\right)\n_{i}F\\
                 &&-2\left(V_{0}-\cos\theta\bar{g}(Y_{n+1},\nu)\right)\frac{|\n F|^{2}}{F^{3}}-\frac{1}{F}\left(2V_{0}-\cos\theta\bar{g}(Y_{n+1},\nu)\right)|h|^{2}\\
                 &&+H\Big(V_{0}+(V_{0}-\cos\theta\bar{g}(Y_{n+1}, \nu))\frac{F^{kl}(h^{2})_{kl}}{F^{2}}+\frac{v}{F}+\frac{1}{F^{2}}(V_{0}-\cos\theta\bar{g}(Y_{n+1},\nu))\mathcal{F}\Big)\\&&
                 -nv -\frac{n}{F}\left(V_{0}-\cos\theta\bar{g}(Y_{n+1},\nu)\right),
             \end{eqnarray*}taking into account of \eqref{L operator}, then \eqref{H-equ} follows.

            Along $\partial\Sigma_{t}$, we choose an orthonormal frame $\{e_{\alpha}\}_{\alpha=2}^{n}$ of $T\partial\Sigma_{t}$ such that $\{e_{1}=\mu, (e_{\alpha})_{\alpha=2}^{n}\}$ forms an orthonormal frame for $T\Sigma_{t}$ and $(h_{ij})$ is diagonal. By  Proposition \ref{basic-capillary}, for any $2\leq \alpha\leq n$, 
           \begin{eqnarray}\label{h-3 de}
    \n_{\mu}h_{\alpha\beta}=\cos\theta\widehat{h}_{\beta\gamma}(h_{11}\delta_{\alpha\gamma}-h_{\alpha\gamma}),
           \end{eqnarray}
         and Proposition \ref{pro F evo} implies 
          \begin{eqnarray}\label{F-1}
0=\n_{\mu}F=F^{11}\n_{{1}}h_{11}+\sum\limits_{\alpha=2}^{n}F^{\alpha\alpha}\n_{{1}}h_{\alpha\alpha}.
          \end{eqnarray}
          together with \eqref{h-3 de} and Proposition \ref{basic-capillary}, we have
          \begin{eqnarray*}
              \n_{\mu}H&=&\n_{1}h_{11}+\sum\limits_{\alpha=2}^{n}\n_{1}h_{\alpha\alpha }=\sum\limits_{\alpha=2}^{n}\left(-\frac{F^{\alpha\alpha}}{F^{11}}\n_{1}h_{\alpha\alpha }+\n_{{1}}h_{\alpha\alpha }\right)  \\
              &=&\sum\limits_{\alpha=2}^{n}\frac{1}{F^{11}}(F^{11}-F^{\alpha\alpha})(h_{11}-h_{\alpha\alpha})\widetilde{h}_{\alpha\alpha}\leq 0,
              \end{eqnarray*}
             where the last inequality follows from Lemma \ref{pro2.3} (3) and the convexity of $\partial\Sigma_{t}\subset \Sigma_{t}$. 
     \end{proof}
 
 \subsection {A priori estimates}\ 
 
 Let $T^{\ast}$ be the maximal time such that there exists a smooth solution to equation \eqref{flow with capillary} on the interval $[0, T^{\ast})$, this implies the strict convexity of $\S_{t}~(0\leq t< T^{\ast})$.
As the origin lies in $\widehat{\partial\Sigma_{0}}$, which indicates  there exists a positive constant $r_{1}$, such that 
$\C_{\theta, r_{1}}\subset \widehat{\Sigma_{0}}$. If there exists a constant $r_2>0$,  such that $\Sigma_{0}\subset \widehat{\C_{\theta, r_2}}$, (by assumption \eqref{add_con}, we can choose $r_2=r_0$ for our flow \eqref{flow with capillary}). From Proposition \ref{sup of model},  the geodesic spherical caps $\C_{\theta, r_{0}}$ are the static solutions to our flow \eqref{flow with capillary}.   With the help of Proposition \ref{sup of model}, following the same argument as in Wang-Weng-Xia \cite[Proposition 4.2 and Proposition 4.10]{WWX1}, we have the following barrier estimate and star-shaped estimate. %which is based on the avoidable principle,  we have 
\begin{prop}\label{C0 estimates}
    For any $t\in [0, T^{\ast})$, the smooth solution $\Sigma_{t}$ of flow \eqref{flow with capillary} satisfies
    \begin{eqnarray}\label{c0 est}
        \Sigma_{t}\subset \widehat{\C_{\theta,r_2}}\setminus\widehat{\C_{\theta,r_1}},
    \end{eqnarray}
    and 
    \begin{eqnarray*}\label{c1 est}
        v=\bar{g}\left(x,\nu\right)\geq C, % \quad \forall ~ (p,t)\in M\times[0,T^{\ast}), 
    \end{eqnarray*}
    where the positive constant $C$ depends only on the initial datum.
\end{prop}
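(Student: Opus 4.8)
The plan is to deduce \eqref{c0 est} from an avoidance principle, using that every geodesic spherical cap $\C_{\theta,r}$ is a \emph{static} solution of \eqref{flow with capillary} (its speed function $f$ vanishes identically, by Proposition \ref{sup of model}), and then to extract the lower bound on $v$ from \eqref{c0 est} together with the strict convexity of $\S_t$ that holds on $[0,T^\ast)$.

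First I would fix the barriers: since $O\in\mathrm{int}(\widehat{\p\S_0})$ there is $r_1>0$ with $\widehat{\C_{\theta,r_1}}\subset\widehat{\S_0}$, and by \eqref{add_con} there is $r_2$ (one may take $r_2=r_0$) with $\S_0\subset\widehat{\C_{\theta,r_2}}$. To propagate these inclusions I would use the scalar reduction \eqref{scalr flow with capillary}: whenever $\S_t$ is star-shaped about $O$ it is a radial graph $\varphi(\cdot,t)$ over $\bar{\SS}^{n}_{+}$ solving the oblique boundary value problem \eqref{scalr flow with capillary}, and $\C_{\theta,r_1},\C_{\theta,r_2}$ correspond to $t$-independent sub/super-solutions $\varphi_{r_1}\le\varphi(\cdot,0)\le\varphi_{r_2}$. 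Since all three graphs meet $\H$ at the \emph{same} contact angle $\theta$, the boundary operator in \eqref{scalr flow with capillary} is uniformly oblique and has the same form for all three, so the parabolic comparison principle yields $\varphi_{r_1}\le\varphi(\cdot,t)\le\varphi_{r_2}$, i.e.\ \eqref{c0 est}. The appeal to star-shapedness is legitimate by a continuity argument: the set of $t\in[0,T^\ast)$ for which \eqref{c0 est} holds is closed, contains $0$, and is open, because on it $\widehat{\p\S_t}\supset\widehat{\p\C_{\theta,r_1}}\ni O$ together with strict convexity makes $\S_t$ star-shaped about $O$, so the comparison applies near each such time. Equivalently one can argue \eqref{c0 est} directly: at a hypothetical first contact of $\S_t$ with a static cap $\C_{\theta,r}$ the two hypersurfaces share tangent plane and normal there, their principal curvatures are ordered, and since $F=H_n/H_{n-1}$ is monotone increasing on $\Gamma_n$ and $f\equiv 0$ on $\C_{\theta,r}$ --- here $V_0-\cos\theta\,\bar g(Y_{n+1},\nu)>0$ from \eqref{Y norm}--\eqref{ellip} is needed --- the relative normal speed has the wrong sign; the strong maximum principle (interior contact), and Hopf's lemma with Proposition \ref{basic-capillary} (contact on $\H$, using the common angle), then exclude this.

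Granted \eqref{c0 est}, the lower bound on $v=\bar g(x,\nu)=\bar g(\phi(\rho)\p_\rho,\nu)$ follows from the strict convexity of $\S_t$ by a supporting-hyperplane argument, and it is automatically uniform in $t$ because the inner barrier $\C_{\theta,r_1}$ is static. For $p\in\mathrm{int}(\S_t)$ the totally geodesic hyperplane tangent to $\S_t$ at $p$ supports the convex body $\widehat{\S_t}\supset\widehat{\C_{\theta,r_1}}$; since $\widehat{\C_{\theta,r_1}}$ contains a fixed geodesic ball near $O$, this hyperplane stays a definite distance from that ball, which forces $v\ge c(r_1)>0$. For $p\in\p\S_t$ one has $\bar g(x,\ov N)=0$, so \eqref{co-normal bundle} gives $v=\sin\theta\,\bar g(x,\ov\nu)$, the support function of the hypersurface $\p\S_t\subset\H$, which is convex by Corollary \ref{coro convex} and encloses $\widehat{\p\C_{\theta,r_1}}$; the same estimate then applies inside $\H$. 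All constants depend only on $r_1$, hence only on $\S_0$. (Alternatively one could run a parabolic maximum principle for the capillary support function $\wt v$, using its homogeneous Neumann condition \eqref{widetilde v boundary} and evolution \eqref{wide v evo}; the geometric route has the merit of delivering a time-independent constant at once.)

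The step I expect to be the main obstacle is the boundary analysis along $\H$: one must ensure that neither the avoidance principle nor the support-function bound degenerates at points of $\p\S_t\subset\H$, and this is precisely where the common contact angle $\theta$ and the Neumann-type identities of Proposition \ref{basic-capillary} (and \eqref{v bry condi}, \eqref{widetilde v boundary}) take over the role of the reflection and Hopf arguments available in the closed case. A secondary delicate point is the continuity bootstrap linking star-shapedness of $\S_t$ (needed for the scalar comparison) to the validity of the inner barrier. Note finally that, consistently with the remark following Theorem \ref{thm AF}, only the strict convexity of $\S_t$---not $\theta\le\frac\pi2$---is used in this proposition.
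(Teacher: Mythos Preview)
Your proposal is correct and matches the paper's approach. The paper's own proof is only a pointer to \cite[Propositions~4.2 and 4.10]{WWX1}, which rest on exactly the ingredients you spell out: the static-cap property of Proposition~\ref{sup of model} feeding an avoidance/comparison argument for \eqref{c0 est}, and the strict convexity of $\S_t$ together with the fixed inner barrier $\C_{\theta,r_1}$ to extract the quantitative star-shapedness bound $v\ge C$.
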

Next, we derive the uniform upper bound of the curvature function $F$.
      \begin{prop}\label{F upper}
          Along the flow \eqref{flow with capillary}, there holds
          \begin{eqnarray*}
              F(p, t)\leq \max\limits_{M}F(\cdot, 0),\quad \forall(p, t)\in M\times[0, T^{\ast}).
          \end{eqnarray*}
      \end{prop}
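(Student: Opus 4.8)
The plan is to apply the parabolic maximum principle to the evolution equation for $F$ derived in Proposition \ref{pro F evo}, using crucially that the evolving hypersurfaces $\S_t$ remain strictly convex on $[0,T^*)$ and that the boundary derivative $\n_\mu F$ vanishes by \eqref{F boundary}. First I would recall \eqref{evol F}, namely
\begin{eqnarray*}
\mathcal{L}F&=&-\frac{ 2\left(V_{0}-\cos\theta\bar{g}(Y_{n+1},\nu)\right)}{F^{3}} F^{ij}\n_{i}F\n_{j}F+\frac 2{F^2} F^{ij}\n_{i}F \n_{j}\left(V_{0}-\cos\theta\bar{g}(Y_{n+1},\nu)\right)\\
&&+(1-\mathcal{F})v+ \left(1-\frac{F^{ij}(h^{2})_{ij}}{F^2} \right) V_{0} F,
\end{eqnarray*}
where $\mathcal{L}$ is the linearized operator \eqref{L operator}. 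The first term on the right-hand side has a favorable sign at an interior spatial maximum of $F$ (it is $\leq 0$), and the second term vanishes there since $\n F = 0$ at such a point. Thus at a would-be positive interior maximum of $F$ the equation reduces to controlling $(1-\mathcal{F})v + (1 - F^{ij}(h^2)_{ij}/F^2)V_0 F$.

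The key algebraic input is \eqref{formula 1}: since $F = H_n/H_{n-1}$, one has $\mathcal{F} - 1 \geq 0$ (equivalently $\mathcal{F}\geq 1$) and $F^{ij}h_i^k h_{kj} = F^2$, i.e. $F^{ij}(h^2)_{ij}/F^2 = 1$ exactly. Therefore the last term vanishes identically, and $(1-\mathcal{F})v \leq 0$ because $\mathcal{F}\geq 1$ and $v = \bar g(x,\nu) > 0$ on the strictly convex star-shaped $\S_t$ (Proposition \ref{C0 estimates}). Hence along the flow $\mathcal{L}F \leq 0$ wherever $\n F = 0$, so a spatial interior maximum of $F$ cannot be increasing in time. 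Then I would run the standard argument: suppose $F$ attains a new spatial maximum at an interior point at some time $t_0 \in (0,T^*)$; the computation above forces $\partial_t F \leq 0$ there, contradicting that the maximum is strictly increasing (the usual refinement handles the non-strict case via a small perturbation $F - \eps t$ or by a direct Hopf-type argument). If instead the spatial maximum is attained on the boundary $\partial M$, then by \eqref{F boundary} we have $\n_\mu F = 0$, so Hopf's lemma rules this out unless $F$ is spatially constant — in which case the interior argument applies. Consequently $\max_M F(\cdot,t)$ is non-increasing in $t$, which gives $F(p,t) \leq \max_M F(\cdot,0)$ for all $(p,t)\in M\times[0,T^*)$.

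The main obstacle — though it is more a bookkeeping point than a genuine difficulty here — is handling the boundary maximum rigorously: one must ensure that the Neumann-type condition \eqref{F boundary} together with the obliqueness/convexity structure genuinely precludes a boundary maximum, invoking Hopf's lemma for the (uniformly) parabolic operator $\mathcal{L}$, whose uniform parabolicity on $[0,t]$ for $t < T^*$ follows from the strict convexity of $\S_t$ and the lower bound $V_0 - \cos\theta\,\bar g(Y_{n+1},\nu) > 0$ from \eqref{ellip}. Everything else is a direct consequence of the sign identities in \eqref{formula 1} and the positivity of $v$, so no delicate estimate is needed beyond what has already been established.
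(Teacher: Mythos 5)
Your proposal is correct and is essentially the same argument the paper gives (the paper's proof is a one-line appeal to the maximum principle, citing the sign $\mathcal{L}F\le 0$ modulo $\nabla F$ from \eqref{evol F} and \eqref{formula 1}, together with $\nabla_\mu F=0$ from \eqref{F boundary}); you have simply written out the details — that at an interior critical point the gradient terms drop, $F^{ij}(h^2)_{ij}=F^2$ kills the last term exactly, $\mathcal{F}\ge 1$ together with $v>0$ makes $(1-\mathcal{F})v\le 0$, and the homogeneous Neumann condition rules out a boundary maximum via Hopf's lemma.
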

      \begin{proof}
          The conclusion follows directly from the maximum principle, due to Proposition \ref{pro F evo} and \eqref{formula 1} imply
          \begin{eqnarray*}
              \L F \leq 0, ~~~ \text{mod} ~~\n F,
          \end{eqnarray*}and $\n_\mu F=0$ on $\p \S_t$.
      \end{proof}

To derive the lower bound of $F$, we adopt the test function $P=\wt v F$, which is also motivated by the idea used in \cite[Propositin 3.10]{MWW} and \cite[Proposition 2.6]{WWX2}.
      \begin{prop}\label{F lower}
      Along the flow \eqref{flow with capillary}, there holds
      \begin{eqnarray*}
          F(p, t)\geq C, \quad \forall(p, t)\in M\times[0, T^{\ast}), 
      \end{eqnarray*}
      where the positive constant $C$ depends only on the initial datum. % $\Sigma_{0}$ and other known data.
      \end{prop}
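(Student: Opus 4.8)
The plan is to deduce the claim from a uniform positive lower bound for the auxiliary function $P=\widetilde v F$ of Proposition~\ref{wvF}. By Proposition~\ref{C0 estimates}, the inequality \eqref{ellip}, and the boundedness of $|x|,V_0,|Y_{n+1}|$ on $\widehat{\C_{\theta,r_2}}$, there are constants $0<c_0'\le C_0'$ depending only on the initial data with $c_0'\le\widetilde v\le C_0'$ along the flow; hence any bound $P\ge c_*>0$ gives $F=P/\widetilde v\ge c_*/C_0'$, which is exactly the assertion.

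To bound $P$ below I would apply the parabolic maximum principle to the evolution equation \eqref{P evo} together with the Neumann condition \eqref{vF bry condi}. Fix $T<T^{\ast}$ and let $(p_0,t_0)$ realize $\min_{M\times[0,T]}P$. If $t_0=0$, then $P\ge\min_M(\widetilde v F)(\cdot,0)>0$ by the strict convexity of $\Sigma_0$, and we are done. If $p_0\in\partial M$ and $t_0>0$, the parabolic Hopf boundary point lemma applied to \eqref{P evo}, combined with $\nabla_\mu P(p_0,t_0)=0$ from \eqref{vF bry condi}, forces the minimum to be attained also at an interior spatial point; so in the remaining case we may assume $p_0\in\operatorname{int}(M)$ and $t_0>0$. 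There $\partial_tP(p_0,t_0)\le0$, $\nabla P(p_0,t_0)=0$, and $F^{ij}\nabla_i\nabla_jP(p_0,t_0)\ge0$ since $F^{ij}$ is positive definite where $\lambda\in\Gamma_n$; recalling the definition \eqref{L operator} of $\mathcal L$, this yields $\mathcal LP(p_0,t_0)\le0$.

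Evaluating \eqref{P evo} at $(p_0,t_0)$, the two explicit gradient terms vanish because $\nabla P=0$, and, writing $v\mathcal F\big(\tfrac1F-\widetilde v\big)=\tfrac{v\mathcal F}{F}(1-P)$, one is left with
\[
0\ \ge\ \frac{v\mathcal F}{F}\,(1-P)+\frac{P\big[\bar g(x,x)-\cos\theta\,\bar g(\n_iY_{n+1},\nu)\bar g(x,e_i)\big]}{V_0-\cos\theta\,\bar g(Y_{n+1},\nu)}-v\widetilde v-\frac{\bar g(x,\n V_0)-\cos\theta\,\bar g(\n_iY_{n+1},\nu)\bar g(x,e_i)}{V_0-\cos\theta\,\bar g(Y_{n+1},\nu)}.
\]
If $P(p_0,t_0)\ge\tfrac12$ the bound is immediate; otherwise $1-P\ge\tfrac12$, and since $\mathcal F\ge1$ by \eqref{formula 1} and $v\ge c>0$ by Proposition~\ref{C0 estimates}, the first term is $\ge\tfrac{c}{2F}$. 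Every other term is bounded in absolute value by a constant $C_1$ depending only on the initial data, because $|x|,V_0,|\n V_0|,|Y_{n+1}|,|\n_iY_{n+1}|,v,\widetilde v,P$ are all controlled by Proposition~\ref{C0 estimates} and $V_0-\cos\theta\,\bar g(Y_{n+1},\nu)$ is bounded away from $0$ by \eqref{ellip}. Thus $0\ge\tfrac{c}{2F}-C_1$, so $F(p_0,t_0)\ge\tfrac{c}{2C_1}$ and $P(p_0,t_0)=\widetilde v F\ge c_0'\tfrac{c}{2C_1}$. Combining the cases, $\min_{M\times[0,T]}P\ge c_*:=\min\{\tfrac12,\,c_0'c/(2C_1)\}>0$, uniformly in $T<T^{\ast}$, which proves the proposition.

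I expect the main obstacle to be this last estimate: showing that the single favorable zeroth-order term $\tfrac{v\mathcal F}{F}(1-P)$ dominates once $F$ is small. This relies on the algebraic fact $\mathcal F\ge1$ from \eqref{formula 1}, on the strict positivity of $v$ and the $C^0$ bounds of Proposition~\ref{C0 estimates}, and on the uniform positivity of $V_0-\cos\theta\,\bar g(Y_{n+1},\nu)$ coming from \eqref{ellip}; a secondary technical point is the reduction, via the Hopf lemma and the Neumann condition \eqref{vF bry condi}, from a possible boundary minimum to an interior one.
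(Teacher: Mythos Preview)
Your argument is correct and is essentially the paper's own: both minimize $P=\widetilde v F$, reduce to an interior minimum via the Neumann condition \eqref{vF bry condi} and the Hopf lemma, and then at the minimum use $\mathcal F\ge 1$ together with $v\ge c$ from Proposition~\ref{C0 estimates} to force $F\ge c'$ once $P<\tfrac12$. One cosmetic point: the bound you claim on $P$ is not provided by Proposition~\ref{C0 estimates}; it follows in your case from the hypothesis $P<\tfrac12$ (the paper instead invokes the upper bound on $F$ from Proposition~\ref{F upper}).
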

      \begin{proof}
From  Proposition \ref{wvF}, the Hopf boundary lemma implies that \begin{eqnarray}\label{P}
        P:=\widetilde{v}F,
    \end{eqnarray} attains its minimum value either at $t=0$ or at some interior point of $M$. If $P$ attains its minimum value at $t=0$,  the conclusion follows directly from  \eqref{c0 est}. Assume now that $P$ attains its minimum value at some interior point, say $p_{0}\in{\rm{int}}(M)$. Then at $p_{0}$, % we have
      \begin{eqnarray*}
          \n P=0,\quad {\rm{and}}\quad \mathcal{L}P\leq 0,
      \end{eqnarray*}
   together with the expression of $\L P$ in Proposition \ref{wvF}, \eqref{formula 1} and Proposition \ref{C0 estimates},   we have
      \begin{eqnarray}\label{L P ineq}
          0\geq \mathcal{L} P\geq  v \mathcal{F} \left(\frac{1}{F}-\widetilde{v} \right)-C(F+1).
      \end{eqnarray}
  If $F\geq \frac{1}{2\widetilde{v}}$ at $p_{0}$, then the assertion follows. Otherwise, assume now that $\frac{1}{2F}> \widetilde{v}$ at $p_{0}$,   taking into account \eqref{formula 1} again and combining Proposition \ref{F upper}, we can derive $F\geq C$ by using \eqref{L P ineq}. Hence we complete the proof.% the assertion follows.
      \end{proof}
   From the expression of $F$ in \eqref{F=high order}, the lower bound of $F$ directly implies the uniform lower bound of the principal curvature of $\Sigma_{t}$. In other words, the convexity is preserved along flow \eqref{flow with capillary}.
      \begin{cor}\label{cor lower of curvature}
       Let $\S_t$ be the smooth solution of flow \eqref{flow with capillary}, then there exists a positive constant $c_{0}$ that depends on the initial datum, such that the principal curvature of $\Sigma_{t}$ satisfies
         \begin{eqnarray*}
             \min\limits_{1\leq i\leq n}\kappa_{i}(p,t)\geq c_{0},
         \end{eqnarray*}
         for all $(p, t)\in M\times [0,T^{\ast})$.
      \end{cor}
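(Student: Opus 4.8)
The plan is to read off the curvature lower bound directly from the two–sided control on the speed function $F$ obtained above, using only the elementary structure of $\sigma_{n}$ and $\sigma_{n-1}$ on the positive cone; there is essentially no analysis left to do here.

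First I would recall that, by the very definition of the maximal existence time $T^{\ast}$, the hypersurface $\Sigma_{t}$ is strictly convex for every $t\in[0,T^{\ast})$, so at each point all principal curvatures $\kappa_{1}(p,t),\dots,\kappa_{n}(p,t)$ are strictly positive. Since $H_{n}=\sigma_{n}$ and $H_{n-1}=\tfrac{1}{n}\sigma_{n-1}$, the speed function in \eqref{F=high order} can be rewritten as
\[
F=\frac{H_{n}}{H_{n-1}}=\frac{n\,\sigma_{n}(\kappa)}{\sigma_{n-1}(\kappa)} .
\]
For each fixed index $i$, the quantity $\sigma_{n-1}(\kappa)=\sum_{k=1}^{n}\prod_{j\neq k}\kappa_{j}$ is a sum of $n$ strictly positive terms, one of which is $\prod_{j\neq i}\kappa_{j}=\sigma_{n}(\kappa)/\kappa_{i}$; hence $\sigma_{n-1}(\kappa)\geq \sigma_{n}(\kappa)/\kappa_{i}$, which rearranges to
\[
\kappa_{i}(p,t)\ \geq\ \frac{\sigma_{n}(\kappa)}{\sigma_{n-1}(\kappa)}\ =\ \frac{F(p,t)}{n},\qquad 1\leq i\leq n .
\]

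Second, Proposition \ref{F lower} furnishes a constant $c>0$, depending only on the initial datum, such that $F(p,t)\geq c$ for all $(p,t)\in M\times[0,T^{\ast})$. Combining this with the previous inequality yields $\min_{1\leq i\leq n}\kappa_{i}(p,t)\geq c/n$ uniformly on $M\times[0,T^{\ast})$, so the claim holds with $c_{0}:=c/n$. This is exactly the quantitative preservation of convexity along the flow.

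The ``hard part'' has in fact already been carried out in Proposition \ref{F lower}: the test function $P=\widetilde v\,F$, the Hopf lemma applied via the Neumann boundary condition $\nabla_{\mu}P=0$ coming from Proposition \ref{wvF}, and the barrier and speed estimates of Propositions \ref{C0 estimates} and \ref{F upper}. The present corollary adds nothing beyond the purely algebraic observation that a positive lower bound on $F=n\sigma_{n}/\sigma_{n-1}$ on the positive cone forces a positive lower bound on every principal curvature; accordingly I do not anticipate any genuine obstacle in this last step.
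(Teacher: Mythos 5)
Your proof is correct and follows the same route the paper intends: it reads off the curvature lower bound from the lower bound on $F=H_{n}/H_{n-1}$ obtained in Proposition \ref{F lower}, together with strict convexity on $[0,T^{\ast})$. The paper states this implication in one sentence without the algebra; your computation $\kappa_{i}\geq \sigma_{n}/\sigma_{n-1}=F/n$ simply fills in that elementary step correctly.
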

   Finally, we derive an upper bound on the mean curvature $H$ of $\Sigma_{t}$.
      \begin{prop}\label{pro H estimate}
          Along the flow \eqref{flow with capillary}, there holds 
          \begin{eqnarray*}
              H(p, t)\leq C,\quad (p,t)\in M\times[0, T^{\ast}),
          \end{eqnarray*}
          where the constant $C$ depends only on the initial datum.
      \end{prop}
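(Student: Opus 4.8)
The plan is to apply the maximum principle to the evolution equation for $H$ derived in Proposition~\ref{pro H evo}, but the raw equation \eqref{H-equ} does not yield a closed bound because of the "good" term $-\frac{(2V_0-\cos\theta\bar g(Y_{n+1},\nu))}{F}|h|^2$ fighting against positive reaction terms; so one needs an auxiliary quantity. Since the two-sided bound on $F$ is already available (Propositions~\ref{F upper} and \ref{F lower}) together with the $C^0$ and $C^1$ estimates (Proposition~\ref{C0 estimates}) and the uniform lower curvature bound (Corollary~\ref{cor lower of curvature}), it suffices to bound the largest principal curvature. Because $F=H_n/H_{n-1}$ is bounded above and below and all $\kappa_i\geq c_0>0$, bounding $H$ is equivalent to bounding $\kappa_{\max}$.

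First I would consider the test function $\zeta:=\log H-\log\wt v+A\rho$ (or a variant such as $H/\wt v$, or $H\,e^{A\rho}$), where $\wt v$ is the capillary support function of Proposition~\ref{pro wide v}, $\rho$ is the geodesic distance from the origin, and $A>0$ is a constant to be chosen. The key structural facts I would exploit are: (i) $\wt v$ is bounded from below by a positive constant and from above, by Proposition~\ref{C0 estimates} and the relation \eqref{wt v}; (ii) $\wt v$ satisfies the homogeneous Neumann condition $\n_\mu\wt v=0$ on $\partial\Sigma_t$, by \eqref{widetilde v boundary}; and (iii) $\n_\mu H\leq 0$ on $\partial\Sigma_t$ by \eqref{bdry of H}, which is exactly where $\theta\leq\frac\pi2$ enters. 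Consequently $\n_\mu\zeta\leq 0$ on the boundary for the first piece, and one must check that the $A\rho$ correction does not spoil this (on $\partial\Sigma_t$ one has $\n_\mu\rho=\bar g(\p_\rho,\mu)$, whose sign can be controlled using $\bar g(x,\ol N)=0$ and \eqref{co-normal bundle}; if necessary replace $\rho$ by a function adapted to the barrier caps $\C_{\theta,r_i}$ from \eqref{c0 est}). Then at an interior spatial maximum of $\zeta$ attained at some $(p_0,t_0)$ with $t_0>0$, we have $\n\zeta=0$ and $\L\zeta\geq 0$ (in the parabolic sense), and I would plug in \eqref{H-equ}, \eqref{wide v evo}, and the evolution of $\rho$ (which follows from \eqref{V0 evo equ} since $V_0=\cosh\rho$, hence $\L\rho$ is controlled by lower-order terms) to derive a differential inequality at $p_0$.

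The main point of the computation is that after dividing \eqref{H-equ} by $H$ and subtracting \eqref{wide v evo} divided by $\wt v$, the dangerous zeroth-order reaction terms proportional to $V_0$ and to $\frac{F^{ij}(h^2)_{ij}}{F^2}$ cancel or combine into a bounded quantity, while the term $-\frac{(2V_0-\cos\theta\bar g(Y_{n+1},\nu))}{FH}|h|^2$ survives with a definite negative sign; since $F$ is bounded and, at a point where $H$ is large, $|h|^2\geq \frac{1}{n}H^2$, this term is $\leq -cH$ for some $c>0$, which dominates all the remaining terms that are at worst linear in $H$ (the gradient-of-$F$ terms, after using $\n\zeta=0$ to replace $\frac{\n H}{H}$ by $\frac{\n\wt v}{\wt v}-A\n\rho$, become bounded or are absorbed by the concavity term $F^{kl,pq}\n_ih_{kl}\n_ih_{pq}\leq 0$ coming from Lemma~\ref{pro2.3}(2)). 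The $A\rho$ term contributes $A\,\L\rho$, which is bounded, plus a gradient contribution handled by the same substitution; choosing $A$ suitably also helps absorb the lower-order junk. One therefore gets $0\leq \L\zeta\leq C-cH(p_0,t_0)$, forcing $H(p_0,t_0)\leq C$, and hence $H\leq C$ everywhere.

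The hard part will be the bookkeeping in the interior computation: verifying that every zeroth-order term growing like $H$ (as opposed to $H^2$) is either controlled or has the right sign, in particular handling the cross terms $\n_i(V_0-\cos\theta\bar g(Y_{n+1},\nu))\n_iF$ and the $F$-gradient terms without a separate gradient estimate on $h$ — this is where using $\n\zeta=0$ at the maximum and the concavity inequality $F^{kl,pq}\n_ih_{kl}\n_ih_{pq}\le0$ is essential, and one may need to use the uniform lower bound $F\ge C$ (Proposition~\ref{F lower}) and $\mathcal F\ge 1$ from \eqref{formula 1} to keep the relevant coefficients from degenerating. A secondary subtlety is the boundary: one must confirm that adding $A\rho$ (or whatever radial barrier is used) keeps $\n_\mu\zeta\le0$ on $\partial\Sigma_t$ so that no boundary maximum can occur; if the naive choice fails, replacing $\rho$ by the $\bar g$-distance to the center of one of the enclosing caps $\C_{\theta,r_2}$ and using the umbilicity relation \eqref{static solution} repairs it. Once $H\le C$ is established, combined with $\kappa_i\ge c_0$ this gives two-sided $C^2$ bounds, and standard Krylov--Safonov and Schauder theory (with the oblique boundary condition in \eqref{scalr flow with capillary}) upgrades to uniform $C^\infty$ estimates, yielding $T^\ast=\infty$.
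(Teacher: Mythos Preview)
Your proposal introduces unnecessary machinery and, more importantly, misidentifies how the dangerous gradient terms are controlled. The paper's proof applies the maximum principle directly to $H$ with no auxiliary function at all: since $\nabla_\mu H\le 0$ on $\partial\Sigma_t$ by \eqref{bdry of H}, any maximum is interior, and at such a point one works straight from \eqref{H-equ}. For the specific choice $F=H_n/H_{n-1}$ one has $F^{ij}(h^2)_{ij}=F^2$ (see \eqref{formula 1}), so the reaction term you worry about, $H\cdot\frac{V_0-\cos\theta\bar g(Y_{n+1},\nu)}{F^2}F^{kl}(h^2)_{kl}$, is already $O(H)$, not $O(H^2)$; hence the detour through $\log\wt v$ to ``cancel'' it is idle.

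The genuine difficulty is the cross term $\frac{2}{F^2}\nabla_i\big(V_0-\cos\theta\bar g(Y_{n+1},\nu)\big)\nabla_iF$, and your mechanism for it does not work: replacing $\nabla H/H$ via $\nabla\zeta=0$ touches $\nabla H$, not $\nabla F$, and the concavity term $F^{kl,pq}\nabla_ih_{kl}\nabla_ih_{pq}\le 0$ is a \emph{different} object from $|\nabla F|^2$ and does not absorb first-order $\nabla F$ terms. The paper's key observation is that $\nabla_i\bar g(Y_{n+1},\nu)$ contains $h_{ii}\bar g(Y_{n+1},e_i)$ (see \eqref{Y1}), so the cross term is of size $|h|\,|\nabla F|$. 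One then completes the square against the good term $-\frac{2(V_0-\cos\theta\bar g(Y_{n+1},\nu))}{F^3}|\nabla F|^2$; the leftover is of order $|h|^2$, with coefficient $\frac{\cos^2\theta(\bar g(Y_{n+1},e_i))^2}{2(V_0-\cos\theta\bar g(Y_{n+1},\nu))}$. The crucial algebraic step, which your outline misses entirely, is to verify that this leftover is \emph{strictly dominated} by $-\frac{2V_0-\cos\theta\bar g(Y_{n+1},\nu)}{F}|h|^2$; this comes down to the inequality $\cos^2\theta\,\bar g(Y_{n+1},Y_{n+1})<V_0^2$, which follows from \eqref{Y norm}--\eqref{ellip}. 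After that one has $0\le\mathcal LH\le -c_1|h|^2+C(H+1)$ at the maximum point, giving the bound. No $\wt v$, no $\rho$, and no boundary gymnastics are needed.
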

      \begin{proof}
          From Proposition \ref{pro H evo}, we know that $\n_{\mu}H\leq 0$ on $\partial\Sigma_{t}$.  Thus $H$ attains its maximum value at some interior point $p_{0}\in \text{int}(M)$. Below we conduct the computation at the point $p_{0}$. 
         Let $\{e_{i}\}_{i=1}^{n}$ be an orthonormal frame around $p_0$ such that $(h_{ij})$ is diagonal. % at this point. 
         From \eqref{Y one}, we have
         \begin{eqnarray}
             \n_{i}\bar{g}(Y_{n+1},\nu)=h_{ii}\bar{g}(Y_{n+1}, e_{i})+\bar{g}(x, e_{i})\bar{g}(\nu, e^{-u}E_{n+1})-\bar{g}(e_{i}, e^{-u}E_{n+1})\bar{g}(x,\nu). \label{Y1}
         \end{eqnarray}
 The concavity of $F=\frac{H_{n}}{H_{n-1}}$ (cf. Lemma \ref{pro2.3} (2)) implies
         \begin{eqnarray}\label{concave condi}
             F^{kl,pq}\n_{i}h_{kl}\n_{i}h_{pq}\leq 0.
         \end{eqnarray}
         Substituting \eqref{Y1} and \eqref{concave condi} into \eqref{H-equ}, combining with the maximal condition and \eqref{formula 1},% we obtain
        \begin{eqnarray*}
          0 & \leq &\mathcal{L}H\\
          &\leq& -\frac{2\cos\theta} {F^{2}}h_{ii}\bar{g}(Y_{n+1}, e_{i})\n_{i}F+2
            \Big(\bar{g}(x, e_{i})-\cos\theta\bar{g}(x, e_{i})\bar{g}(\nu, e^{-u}E_{n+1})\\
            &&+\cos\theta\bar{g}(e_{i}, e^{-u}E_{n+1})\bar{g}(x,\nu)\Big)\frac{\n_{i}F}{F^{2}}-2\left(V_{0}-\cos\theta \bar{g}(Y_{n+1},\nu)\right)\frac{|\n F|^{2}}{F^{3}}\\
            &&-\frac{\left(2V_{0}-\cos\theta\bar{g}(Y_{n+1},\nu)\right)}{F}|h|^{2}+H\Big(2V_{0}+\frac{v}{F}-\cos\theta\bar{g}(Y_{n+1},\nu)\\
            &&+(V_{0}-\cos\theta\bar{g}(Y_{n+1},\nu))F^{-2}\mathcal{F}\Big)
         -nv-\frac{n}{F}\left(V_{0}-\cos\theta\bar{g}(Y_{n+1},\nu)\right)\\
         &:=&I_{1}+I_{2},
        \end{eqnarray*}
        where we use $I_{1}$ to denote all the terms involving $\n_{i} F$ and  $I_{2}$ to represent the remaining terms. To proceed, for notation simplicity we further denote
     \begin{eqnarray*}
     S&=&2\left(V_{0}-\cos\theta\bar{g}(Y_{n+1},\nu)\right),\\
         B_{i}&=&2\cos\theta h_{ii}\bar{g}(Y_{n+1},e_{i}),\\
     D_{i}&=&2\Big(\bar{g}(x, e_{i})-\cos\theta\bar{g}(x, e_{i})\bar{g}(\nu, e^{-u}E_{n+1})
            +\cos\theta\bar{g}(e_{i}, e^{-u}E_{n+1})\bar{g}(x,\nu)\Big).
     \end{eqnarray*}
     From Proposition \ref{F upper} and Proposition \ref{F lower}, we have
     \begin{eqnarray*}
        F^{3}I_{1}&=&-S|\n F|^{2}+(D_{i}-B_{i})F\n_{i}F\\
        &=&
        -S \sum_{i=1}^n  \left(\n_{i}F-\frac{(D_{i}-B_{i})}{2S}F\right)^{2}+\frac{(D_{i}-B_{i})^{2}}{4S}F^{2}\\
        &\leq & \frac{\cos^{2}\theta|h_{ii}|^{2}(\bar{g}(Y_{n+1},e_{i}))^{2}F^{2}}{2\left(V_{0}-\cos\theta\bar{g}(Y_{n+1}, \nu)\right)}+C(H+1)F.
     \end{eqnarray*}
      Combining Proposition \ref{C0 estimates}, Proposition \ref{F lower} and \eqref{ellip}, we conclude 
     \begin{eqnarray*}
       && -\frac{\left(2V_{0}-\cos\theta\bar{g}(Y_{n+1},\nu)\right)}{F}|h|^{2}+\frac{\cos^{2}\theta (\bar{g}(Y_{n+1},e_{i}))^{2}F^{-1}|h_{ii}|^{2}}{2(V_{0}-\cos\theta\bar{g}(Y_{n+1},\nu))}\\
      &\leq &\frac{|h|^{2}}{2F\left(V_{0}-\cos\theta\bar{g}(Y_{n+1},\nu)\right)}\Big(-4V_{0}^{2}+6V_{0}\cos\theta \bar{g}(Y_{n+1},\nu)-3\cos^{2}\theta(\bar{g}(Y_{n+1},\nu))^{2}\\
      &&+\cos^{2}\theta(\bar{g}(Y_{n+1}, Y_{n+1}))^{2}\Big)\\
      &=&\frac{|h_{ii}|^{2}}{2F(V_{0}-\cos\theta\bar{g}(Y_{n+1},\nu))}\Big(-3(V_{0}-\cos\theta\bar{g}(Y_{n+1},\nu))^{2}-V^{2}_{0}+\cos^{2}\theta(\bar{g}(Y_{n+1}, Y_{n+1}))^{2}\Big)\\
      &\leq &-c_{1}|h|^{2},
     \end{eqnarray*}
     for some uniform positive constant $c_{1}>0$, which only depends on the initial datum. Altogether implies  
     \begin{eqnarray*}
         0 &\leq &\mathcal{L}H\leq  -c_{1}|h|^{2}+C(H+1),
     \end{eqnarray*}
    this implies an upper bound of $H$. Hence we complete the proof.
    \end{proof}
    We obtain the uniform bound for all principal curvatures as a direct consequence of Corollary \ref{cor lower of curvature} and Proposition \ref{pro H estimate}.
\begin{cor}\label{coro curvature upper}
  Let $\S_t$ be the smooth solution of flow \eqref{flow with capillary},  then there exists a positive constant $C$ depending only on the initial datum, such that 
    \begin{eqnarray*}
        \max\limits_{1\leq i\leq n}\kappa_{i}(p,t)\leq c,
    \end{eqnarray*}
    for all $(p,t)\in M\times [0,T^{\ast})$. 
\end{cor}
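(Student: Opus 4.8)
\begin{proof}[\textbf{Proof of Corollary \ref{coro curvature upper}.}]
The plan is to combine the two estimates just established---the uniform positive lower bound on all principal curvatures (Corollary \ref{cor lower of curvature}) and the uniform upper bound on the mean curvature $H$ (Proposition \ref{pro H estimate})---by a purely elementary algebraic argument, so that no further analysis of the flow is needed.

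First, by Corollary \ref{cor lower of curvature} there is a constant $c_{0}>0$, depending only on the initial datum, with $\kappa_{i}(p,t)\geq c_{0}$ for all $1\leq i\leq n$ and all $(p,t)\in M\times[0,T^{\ast})$; and by Proposition \ref{pro H estimate} there is a constant $C>0$, depending only on the initial datum, with $H(p,t)=\sum_{i=1}^{n}\kappa_{i}(p,t)\leq C$. Fix $(p,t)$ and an index $j$. Writing $\kappa_{j}=H-\sum_{i\neq j}\kappa_{i}$ and bounding each of the $n-1$ summands with $i\neq j$ from below by $c_{0}$, we obtain $\kappa_{j}\leq C-(n-1)c_{0}$. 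Since $j$ and $(p,t)$ were arbitrary, this gives $\max_{1\leq i\leq n}\kappa_{i}(p,t)\leq c$ for all $(p,t)\in M\times[0,T^{\ast})$, with $c:=C-(n-1)c_{0}$; note $c>0$ automatically, since $H\geq nc_{0}$ forces $C\geq nc_{0}$, whence $c\geq c_{0}>0$.
\end{proof}

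\noindent\emph{Remark on the argument.} I do not expect any genuine obstacle in this last step: the corollary is an immediate consequence of the two preceding results, and the real work lies in establishing those. The lower curvature bound rests on the two-sided bound for $F$ (Propositions \ref{F upper} and \ref{F lower}) together with the fact that $F=H_{n}/H_{n-1}$ controls all $\kappa_{i}$ from below; the upper bound for $H$ uses the evolution equation \eqref{H-equ}, the concavity of $F$ to discard the term $F^{kl,pq}\nabla_{i}h_{kl}\nabla_{i}h_{pq}$, the boundary computation \eqref{bdry of H} (where the hypothesis $\theta\leq\frac{\pi}{2}$ enters), and a completion-of-squares that absorbs the gradient terms in $\nabla F$ into the good negative term $-c_{1}|h|^{2}$ via the $C^{0}$ and star-shapedness estimates of Proposition \ref{C0 estimates}. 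Once these are in place, the present corollary follows in a single line.
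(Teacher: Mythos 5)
Your proof is correct and fills in precisely the one-line algebraic deduction that the paper leaves implicit when it states the corollary is a ``direct consequence'' of Corollary \ref{cor lower of curvature} and Proposition \ref{pro H estimate}. The decomposition $\kappa_j = H - \sum_{i\neq j}\kappa_i \leq C-(n-1)c_0$, together with the observation that $c_0$-pinching from below forces $C\geq nc_0$ so the bound is positive, is exactly the intended argument.
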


Now we finish the proof of Theorem \ref{flow-result}.
 
\begin{proof}[\textbf{Proof of Theorem \ref{flow-result}}]
    From Proposition \ref{C0 estimates}, Corollary \ref{cor lower of curvature} and Corollary \ref{coro curvature upper}, we derive a uniform estimate for $\varphi$  in $C^{2}\left(\bar{\SS}^{n}_{+}\times [0, T^{\ast})\right)$ and the scalar equation \eqref{scalr flow with capillary} is uniformly parabolic. Since $|\cos\theta|<1$, the boundary value condition in \eqref{scalr flow with capillary} satisfies the uniformly oblique property. From the standard theory for the parabolic equation with oblique derivative boundary value condition (see e.g. \cite[Theorem~6.1, Theorem~ 6.4 and Theorem~6.5]{Dong}, also \cite[Theorem~5]{Ural} and \cite[Theorem~14.23]{Lie}), we obtain the uniform $C^{\infty}$-estimates and the long-time existence of solution to flow \eqref{flow with capillary}. The convergence can be shown similarly by using the argument as in  \cite[Section 3, Proposition~3.8]{SWX} or \cite[Section 3.4]{WX2022}. Hence we complete the proof.
\end{proof}

\section{The Alexandrov-Fenchel inequalities in $\HH^{n+1}$}
In this section, we obtain the Alexandrov-Fenchel inequalities for capillary hypersurface in $\HH^{n+1}$. In other words, we complete the proof of Theorem \ref{thm AF}, by applying the convergence result of flow \eqref{flow with capillary}, i.e., Theorem \ref{flow-result}.
\begin{proof}[\textbf{Proof of Theorem \ref{thm AF}}]
Assume that $\Sigma_{0}$ is strictly convex, combining with Theorem \ref{flow-result} and Proposition \ref{monotone along flow}, we prove the Theorem \ref{thm AF} for strictly convex capillary hypersurfaces in ${\H^+}$. For convex but not strictly convex capillary hypersurfaces, the inequalities hold by approximation. The equality characterization can be proved by adapting a similar argument in \cite{SWX, WX2022}. Hence we complete the proof.  
\end{proof}

\begin{proof}[\textbf{Proof of Corollary \ref{cor-Minkowski ineq}}]
The assertion follows directly from Theorem \ref{thm AF} and the expression of $\A_{2,\theta}(\wh\S)$ as
\begin{eqnarray*}
    \A_{2,\theta}(\wh\S)= \frac 1 {n(n+1)} \left(\int_\S HdA-n|\wh\S|-\sin\theta \cos\theta |\p \S|\right).
\end{eqnarray*}
\end{proof}

 We conclude this paper with a remark on the Alexandrov-Fenchel inequalities for capillary hypersurfaces in $\HH^{n+1}$. 
\begin{rem}\label{remark on general AF}
To achieve the Alexandrov-Fenchel inequalities of quermassintegrals between $\A_{k,\theta}(\wh{\Sigma})$ and $\A_{k+1,\theta}(\wh{\Sigma})$ for capillary hypersurfaces $\S$ in $\HH^{n+1}$. It is natural to design an inverse curvature flow as in \eqref{flow with cap-normal} or \eqref{flow with capillary} with the curvature function $F= \frac{H_k}{H_{k-1}}$ for $1\leq k\leq n$, instead of \eqref{F=high order}. Along such a flow, $\A_{k,\theta}(\wh{\Sigma}_t)$ is preserved, while $\A_{k+1,\theta}(\wh{\Sigma}_t)$ is monotone non-increasing with respect to time $t \geq 0$. Using the way of the maximum principle similar to Propositions \ref{F upper} and \ref{F lower}, we can establish the two-sided uniform positive bounds on the curvature function $F = \frac{H_k}{H_{k-1}}$. Additionally, we can obtain a uniform upper bound for the mean curvature $H$ as in Proposition \ref{pro H estimate}. However, the $h$-convexity preserving is not yet available for us. Nevertheless, we expect that such flow will still smoothly converge to a geodesic spherical cap, assuming that the initial capillary hypersurface is $h$-convexity (or just convexity).
\end{rem}

\noindent\textbf{Acknowledgment:} The authors would like to express their sincere gratitude to Professor Guofang Wang for his constant encouragement and many insightful discussions on this subject.

%\noindent{\textbf{Data Availability:}} Data sharing not applicable to this article as no datasets were generated or analyzed during the current study.

%\noindent\textbf{Conflict of interest:} On behalf of all authors, the corresponding author states that there is no conflict of interest.


\begin{thebibliography}{99}

\bibitem{AiSo} Ainouz A., Souam R., Stable capillary hypersurfaces in a half-space or a slab.  Indiana Univ. Math. J. 65 (2016), no. 3, 813--831.

\bibitem{A94}Andrews B., Contraction of convex hypersurfaces in Riemannian spaces. J. Differential Geom. 39 (1994), no. 2, 407--431.

\bibitem{ACW}
Andrews B., Chen X., Wei Y., Volume preserving flow and Alexandrov–Fenchel type inequalities in hyperbolic space. J. Eur. Math. Soc. 23 (2021), no. 7, 2467--2509. 

\bibitem{AHL}
Andrews B., Hu Y., Li H., Harmonic mean curvature flow and geometric inequalities. Adv. Math. 375 (2020), 107393.

\bibitem{AM}  Andrews B., McCoy J., Convex hypersurfaces with pinched principal curvatures and flow of convex hypersurfaces by high powers of curvature. Trans. Amer. Math. Soc. 364 (2012), no. 7, 3427--3447.

\bibitem{AW}
Andrews B., Wei Y.,  Quermassintegral preserving curvature flow in hyperbolic space. Geom. Funct. Anal. 28 (2018), no. 5, 1183--1208.



\bibitem{BC97}
Barbosa J., Colares A., Stability of hypersurfaces with constant $r$-mean curvature.  Ann. Global Anal. Geom. 15 (1997), no. 3, 277--297.

\bibitem{BP} Bertini M., Pipoli G., Volume preserving non-homogeneous mean curvature flow in hyperbolic space. Differential Geom. Appl. 54 (2017), part B, 448--463.

\bibitem{BDS} B\"{o}gelein V., Duzaar F., Scheven C., A sharp quantitative isoperimetric inequality in hyperbolic $n$-space. Calc. Var. Partial Differential Equations 54 (2015), no. 4, 3967--4017. 

\bibitem{BS} Bokowski J., Sperner E., Zerlegung konvexer Körper durch minimale Trennflächen.  J. Reine Angew. Math. 311(312) (1979), 80--100.

%\bibitem{Brendle12}Brendle S., A sharp bound for the area of minimal surfaces in the unit ball. Geom. Funct. Anal. 22 (2012), no. 3, 621--626.


\bibitem{BGL}Brendle S., Guan P., Li J., An inverse curvature type hypersurface flow in $\HH^{n+1}$, Preprint.


\bibitem{BHW}Brendle S., Hung P., Wang M., A Minkowski inequality for hypersurfaces in the anti-de Sitter-Schwarzschild manifold. Comm. Pure Appl. Math. 69 (2016), no. 1, 124--144.


\bibitem{BM} Burago Y., Maz'ya V. G., Potential theory and function theory for irregular regions, Seminars in Mathematics, V. A. Steklov Mathematical Institute, Leningrad, Vol. 3, Consultants Bureau, New York, 1969.

\bibitem{Yu}Burago Y., Zalgaller V., Geometric inequalities, Grundlehren der mathematischen Wissenschaften, no. 285, Springer, Berlin, Heidelberg, 1988.

\bibitem{CM}Cabezas-Rivas E.,  Miquel V., Volume preserving mean curvature flow in the hyperbolic space. Indiana Univ. Math. J. 56 (2007), no. 5, 2061--2086.

\bibitem{chen} Chen Y., Pyo J., Some rigidity results on compact hypersurfaces with capillary boundary in Hyperbolic space. arXiv:2206.09062.

\bibitem{CGR}Choe J., Ghomi M., Ritoré M., The relative isoperimetric inequality outside convex domains in $\RR^n$. Calc. Var. Partial Differential Equations 29 (2007), no. 4, 421--429.

\bibitem{deG}de Lima L., Girão F., An Alexandrov-Fenchel-type inequality in hyperbolic space with an application to a Penrose inequality. Ann. Henri Poincaré 17 (2016), no. 4, 979--1002.



\bibitem{Dong}Dong G., Initial and nonlinear oblique boundary value problems for fully nonlinear parabolic equations.  J. Partial Differential Equations Ser. A $1$ (1988), no. 2, 12--42. 

%\bibitem{GS}Gallego E., Solanes G., Integral geometry and geometric inequalities in hyperbolic space. Differential Geom. Appl. 22 (2005), no. 3, 315--325.

\bibitem{GWW-2} Ge Y., Wang G., Wu J., Hyperbolic Alexandrov-Fenchel quermassintegral inequalities \uppercase\expandafter{\romannumeral 2}.  J. Differential Geom. 98 (2014), no. 2, 237--260.

\bibitem{GWW-1}Ge Y.,  Wang G., Wu J., The GBC mass for asymptotically hyperbolic manifolds.  Math. Z. 281 (2015), no. 1-2, 257--297.

\bibitem{GL2009}Guan P., Li J., The quermassintegral inequalities for $k$-convex star shaped domains. Adv. Math. 221 (2009), no. 5, 1725--1732.

\bibitem{GL2014}Guan P., Li J., A mean curvature type flow in space forms. Int. Math. Res. Not. 2015, no.13, 4716--4740.

\bibitem{GLW} Guan P., Li J., Wang M., A volume preserving flow and the isoperimetric problem in warped product spaces. Trans. Amer. Math. Soc. 372 (2019), no. 4, 2777--2798. 

\bibitem{HL2019}Hu Y., Li H., Geometric inequalities for hypersurfaces with nonnegative sectional curvature in hyperbolic space. Calc. Var. Partial Differential Equations 58 (2019), no. 2, Paper No. 55, 20pp.

\bibitem{LHW2021}Hu Y.,  Li H., Wei Y., Locally constrained curvature flows and geometric inequalities in hyperbolic space.  Math. Ann. 382 (2022), no. 3-4, 1425--1474.



\bibitem{HWYZ1}Hu Y., Wei Y., Yang B., Zhou T., On the mean curvature type flow for convex capillary hypersurfaces in the ball. Calc. Var. Partial Differential Equations 62 (2023), no. 7, Paper No. 209, 23pp.

\bibitem{HWYZ2}Hu Y., Wei Y., Yang B., Zhou T., A complete family of Alexandrov-Fenchel inequalities for convex capillary hypersurfaces in the half-space. Math. Ann. (2024). https://doi.org/10.1007/s00208-024-02841-9.



\bibitem{Hui86} Huisken G., Contracting convex hypersurfaces in Riemannian manifolds by their mean curvature. Invent. Math. 84 (1986), no. 3, 463--480.




\bibitem{LS16} Lambert B., Scheuer J., The inverse mean curvature flow perpendicular to the sphere.  Math. Ann. 364 (2016), no. 3-4, 1069--1093.
		
\bibitem{LS17} Lambert B., Scheuer J., A geometric inequality for convex free boundary hypersurfaces in the unit ball.  Proc. Amer. Math. Soc. 145 (2017), no. 9, 4009--4020. 

\bibitem{LWX}Li H.,  Wei Y., Xiong C.,  A geometric inequality on hypersurface in hyperbolic space.  Adv. Math. 253 (2014), 152--162. 
	
\bibitem{Lie}Lieberman G., Second order parabolic differential equations. World Scientific Publishing Co., Inc., River Edge, NJ, 1996. xii+439 pp. ISBN: 981-02-2883-X.

\bibitem{LWW}Liu L., Wang G., Weng L., The relative isoperimetric inequality for minimal submanifolds with free boundary in the Euclidean space. J. Funct. Anal. 285 (2023), no. 2, Paper No. 109945.

\bibitem{Maggi} Maggi F., Sets of finite perimeter and geometric variational problems. An introduction to geometric measure theory. Cambridge Studies in Advanced Mathematics, 135. Cambridge University Press, Cambridge, 2012. 

\bibitem{Mak}  Makowski M., Mixed volume preserving curvature flows in hyperbolic space. arXiv:1208.1898.

\bibitem{MWW} Mei X.,  Wang G.,  Weng L., A constrained mean curvature flow and Alexandrov–Fenchel inequalities.  Int. Math. Res. Not. 2024, no. 1, 152--174.

%\bibitem{MWW23} Mei X.,  Wang G.,  Weng L., The capillary Minkowski problem. Preprint.

\bibitem{MWWX} Mei X., Wang G.,  Weng L.,  Xia C., Alexandrov-Fenchel inequalities for convex hypersurfaces in the half-space with capillary boundary II. Preprint. %, 2023.

\bibitem{MW2023} Mei X.,  Weng L., A constrained mean curvature type flow for capillary boundary hypersurfaces in space forms. J. Geom. Anal. 33 (2023), no. 6, Paper No. 195, 28 pp.
 




\bibitem{Ural}Nazarov A., Ural’tseva N.,  A problem with an oblique derivative for a quasilinear parabolic equation. (Russian) Zap. Nauchn. Sem. S.-Peterburg. Otdel. Mat. Inst. Steklov. (POMI) 200 (1992), Kraev. Zadachi Mat. Fiz. Smezh. Voprosy Teor. Funktsiĭ. 24, 118–131, 189; translation in J. Math. Sci. 77 (1995), no. 3, 3212–3220.

\bibitem{QWX}
Qiang T., Weng L.,  Xia C.,  A locally constrained mean curvature type flow with free boundary in a hyperbolic ball.  Proc. Amer. Math. Soc. 151 (2023), no. 6, 2641--2653.



\bibitem{Santalo}Santaló L.,  Integral geometry and geometric probability. Second edition. With a foreword by Mark Kac. Cambridge Mathematical Library. Cambridge University Press, Cambridge, 2004.



\bibitem{SWX}Scheuer J.,  Wang G.,  Xia C., Alexandrov-Fenchel inequalities for convex hypersurfaces with free boundary in a ball.  J. Differential Geom. 120 (2022), no. 2, 345--373.


\bibitem{SX}Scheuer J., Xia C.,  Locally constrained inverse curvature flows.  Trans. Amer. Math. Soc. 372 (2019), no. 10, 6771--6803. 

\bibitem{Schm}Schmidt E., Beweis der isoperimetrischen Eigenschaft der Kugel im hyperbolischen und sph\"{a}rischen Raum jeder Dimensionenzahl. (German) Math. Z. 49 (1943), 1--109.


\bibitem{Sola}Solanes G.,  Integral geometry and the Gauss-Bonnet theorem in constant curvature spaces.  Trans. Amer. Math. Soc. 358 (2006), no. 3, 1105--1115.



\bibitem{Spruck} Spruck J., Geometric aspects of the theory of fully nonlinear elliptic equations. Global theory of minimal surfaces, 283--309, Clay Math. Proc., 2, Amer. Math. Soc., Providence, RI, 2005.


\bibitem{WWX1}Wang G., Weng L., Xia C.,  Alexandrov-Fenchel inequalities for convex hypersurfaces in the half-space with capillary boundary. Math. Ann. 388 (2024), no. 2, 2121--2154. 

\bibitem{WWX2}Wang G., Weng L., Xia C.,  A Minkowski-type inequality for capillary hypersurfaces in a half-space. J. Funct. Anal. 287 (2024), no. 4, Paper No.  110496, 22 pp.


\bibitem{WX2014} Wang G., Xia C.,  Isoperimetric type problems and Alexandrov–Fenchel type inequalities in the hyperbolic space.  Adv. Math. 259 (2014), 532--556.


\bibitem{WX2019}Wang G.,  Xia C.,  Uniqueness of stable capillary hypersurfaces in a ball. Math. Ann. 374 (2019), no. 3-4, 1845--1882.

\bibitem{WX20} Wang G., Xia C., Guan-Li type mean curvature flow for free boundary hypersurfaces in a ball. Comm. Anal. Geom. 30 (2022), no. 9, 2157--2174.

%\bibitem{WX2024} Wang G., Xia C., Capillary hypersurfaces, Heintze-Karcher's inequality and Zermelo's navigation. arXiv:2401.08450. 

\bibitem{WX}Wei Y., Xiong C.,  Inequalities of Alexandrov–Fenchel type for convex hypersurfaces in hyperbolic space and in the sphere.  Pacific J. Math. 277 (2015), no. 1, 219--239.

\bibitem{WX2022}Weng L., Xia C.,  Alexandrov-Fenchel inequality for convex hypersurfaces with capillary boundary in a ball.  Trans. Amer. Math. Soc. 375 (2022), no. 12, 8851--8883.
	

\end{thebibliography}
\end{document}